\documentclass{amsart}
\usepackage[T1]{fontenc}
\usepackage[utf8]{inputenc}
\usepackage[english]{babel}

\usepackage[margin=1in]{geometry}
\usepackage{amsmath}
\usepackage{amsthm}
\usepackage{amssymb}
\usepackage{paralist}
\usepackage{graphicx}
\usepackage{verbatim}
\usepackage{colonequals} 
\usepackage{hyperref}

\theoremstyle{definition}
\newtheorem{defi}{Definition}[section]
\newtheorem{es}[defi]{Example}

\newtheorem{oss}[defi]{Remark}
\theoremstyle{plain}
\newtheorem{thm}[defi]{Theorem}
\newtheorem*{nthm}{Theorem}
\newtheorem*{con}{Conjecture}
\newtheorem{que}{Question}

\newtheorem{pro}[defi]{Proposition}
\newtheorem{lemma}[defi]{Lemma}
\newtheorem{cor}[defi]{Corollary}

\newcommand{\N}{\mathbb{N}}

\newcommand{\Z}{\mathbb{Z}}
\newcommand{\Q}{\mathbb{Q}}
\newcommand{\R}{\mathbb{R}}

\newcommand{\C}{\mathbb{C}}
\newcommand{\cC}{\mathcal{C}}
\newcommand{\cD}{\mathcal{D}}

\newcommand{\A}{\mathbb{A}}

\newcommand{\pr}{\mathbb{P}}
\newcommand{\cO}{\mathcal{O}}

\newcommand{\virg}[1]{\textquotedblleft #1\textquotedblright}

\title{Triple intersections for algebraic curves on the real torus}
\author{
Danilo Avaro
}
\address{Dipartimento di Matematica Guido Castelnuovo, Sapienza Università di Roma, Piazzale Aldo Moro 5, I-00185 Roma}
\email{danilo.avaro@uniroma1.it}
\subjclass[2020]{14H50, 14G05, 11G30, 37P05}
\keywords{Integral points, algebraic curves, arithmetic dynamics}
\usepackage{microtype}
\begin{document}

\begin{abstract}
    Motivated by a question of Corvaja and Zannier, we investigate whether the projections on the real torus $\R^2/\Z^2$ of three real algebraic plane curves admit infinitely many triple intersections. We analyze the case of three lines in detail, obtaining a complete classification. For higher-degree curves, we derive several finiteness results using Siegel’s and Levin’s theorems on integral points on curves. On the other hand, we construct explicit examples of higher-degree curves with infinitely many triple intersections.
\end{abstract}

\maketitle

\section{Introduction}

Given three algebraic plane curves defined over the real numbers, one expects that their intersection is empty. However, their projections onto the real torus $\R^2/\Z^2$ may intersect. In this paper, we answer several cases of the following question concerning the finiteness of triple intersections on the real torus, posed by Corvaja and Zannier in~\cite{CZ2023}.

\begin{que}\label{QuestionTorus}
    Given three algebraic plane curves in $\A^2(\R)$, determine if there exist infinitely many $\Z^2$-orbits of points of $\A^2(\R)$ intersecting each of the curves. 
\end{que}

Question~\ref{QuestionTorus} can also be formulated in terms of points of $\A^2(\R)$ instead of orbits. Indeed, given three algebraic plane curves $\cC_1,\cC_2,\cC_3$, we can ask if there exist infinitely many points $P\in \cC_3(\R)$ together with vectors $v_1,v_2\in\Z^2$ depending on $P$ satisfying $P+v_1\in \cC_1(\R)$ and $P+v_2\in\cC_2(\R).$ 
The two formulations are not generally equivalent. Indeed, infinitely many points on $\cC_3$ may belong to the same orbit. 

The problem also admits an interpretation in the framework of unlikely intersections. Let us denote by $\tau_v(\cC)$ the translation of the curve $\cC$ by the integral vector $v\in \Z^2$. The set 
$$\mathcal{A}=\bigcup_{v_1,v_2\in\Z^2}\tau_{v_1}(\cC_1)\cap \tau_{v_2}(\cC_2)$$
is a countable union of finite sets. Then, for a general curve $\cC_3$, we expect the set $\cC_3\cap \mathcal{A}$ to be finite. 

We remark that, over the real numbers, the problem is already non-trivial for pairs of curves. For instance, suppose that two irreducible algebraic curves have compact images on the real torus. Then, their projections intersect only finitely many times (Theorem~\ref{compactintersections}). This motivates the study of topological properties of projections of real algebraic curves on the real torus, exposed in Section~\ref{secproj}. Moreover, in Section~\ref{secpairs} we address the case of pairs of curves.

The following theorem summarizes the main results of this paper. We actually prove slightly more general results, but we state them here in a simplified form for clarity of exposition.    

\begin{nthm}
    Let $\cC_1,\cC_2,\cC_3$ be three irreducible algebraic plane curves with degree $d_1\leq d_2\leq d_3$ respectively. In the following cases, there are only finitely many triple intersections on the real torus.    
    \begin{compactenum}[(a)]
        \item Two of the three curves have compact images on the real torus (see Theorem~{\normalfont\ref{compactintersections}}).
        \item The curves are lines, i.e., $d_1=d_2=d_3=1$, and their slopes do not satisfy a specific algebraic relation over $\Q$ (see Theorem~{\normalfont\ref{infinitelines}}).
        \item $d_1=d_2=1$, and $\cC_3$ is hyperbolic (see Definition~{\normalfont\ref{defhyp}} and Theorem~{\normalfont\ref{Twolinesandhyperbolic}}).
        \item The gonality of $\cC_3$ is greater than $d_1d_2$ (see Proposition~{\normalfont\ref{conLevin}}).
        \item $d_1=1,$  $d_2=d_3>2$, at least one between $\cC_2$ and $\cC_3$ has gonality greater than $\lfloor\frac{d}{2}\rfloor$, and there exists no curve $\cC$ such that $\cC_1$ and $\cC_2$ are mapped to $\cC$ via an endomorphism of $\A^2$ (see Theorem~{\normalfont\ref{lineandsamedegree}}).
    \end{compactenum}
\end{nthm}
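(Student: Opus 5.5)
This theorem is a summary: each of the five items is proved later in the paper. My plan is therefore to prove each case separately, using one common reduction. A triple intersection is a point $P\in\cC_3(\R)$ together with $v_1,v_2\in\Z^2$ such that $P+v_1\in\cC_1$ and $P+v_2\in\cC_2$. Equivalently, $P$ lies on $\tau_{-v_1}(\cC_1)\cap\tau_{-v_2}(\cC_2)$. The strategy is to eliminate $P$ and read the conditions as integral points on an auxiliary curve, so that Siegel's theorem and Levin's gonality version apply.

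\emph{Items (a) and (d).} For (a), suppose $\cC_1,\cC_2$ have compact images on the torus. Then each image is a compact real-analytic set, and so is their intersection. If that intersection were infinite, it would accumulate. Analyticity then forces a common component of the two images, which is impossible for distinct irreducible curves. This is the content of Theorem~\ref{compactintersections}, and a fortiori triple intersections are finite.

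For (d), consider the points $P\in\cC_3$ such that $P+v_1\in\cC_1$ and $P+v_2\in\cC_2$. View $(v_1,v_2)$ as an integral point of the variety
\[
W=\{(P,a,b):P\in\cC_3,\ P+a\in\cC_1,\ P+b\in\cC_2\}.
\]
Fix the translation parameters. Intersecting a translate of $\cC_1$ with a translate of $\cC_2$ gives at most $d_1d_2$ points, by Bézout. So, for fixed integral $(a,b)$, the fibre of the projection $W\to\A^4$ has degree at most $d_1d_2$. Conversely, the integrality constraints give a map from the relevant curve component to $\cC_3$ with integral values of bounded degree. Levin's theorem says that a curve with infinitely many integral points of degree $\le d$ over a number field has gonality $\le 2d$; the paper's version is a real analogue. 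This forces $\mathrm{gon}(\cC_3)\le d_1d_2$, which contradicts the hypothesis. One subtlety is that the curves are real, not defined over a number field. I would handle this with a specialization argument, or by showing that the relevant components must be defined over $\overline{\Q}$.

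\emph{Items (b), (c), (e).} For (b), three lines $y=m_ix+q_i$ give linear conditions. Eliminating $P$ yields a linear relation
\[
\alpha_1 n_1+\dots+\alpha_4 n_4=\beta,
\]
with $n_j\in\Z$ and coefficients built from the $m_i,q_i$. There are infinitely many solutions producing distinct orbits exactly when the coefficients satisfy a $\Q$-linear dependence. Classifying these cases gives the "specific algebraic relation" of Theorem~\ref{infinitelines}.

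For (c), two lines give $P=$ an affine function of four integers. Substituting into the equation of $\cC_3$ yields integral points on the curves obtained by slicing. Hyperbolicity of $\cC_3$ (Definition~\ref{defhyp}) makes these curves have genus $\ge1$ or at least $3$ points at infinity, so Siegel's theorem gives finiteness.

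For (e), parametrize the line $\cC_1$ by a variable $t$. The two conditions then give an integral-point problem on a curve covering both $\cC_2$ and $\cC_3$. There are two cases. If the translation induces a common quotient, we get exactly the excluded endomorphism case. Otherwise the gonality bound $>\lfloor d/2\rfloor$ allows Levin's theorem to apply, as in (d).

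\emph{Main obstacle.} I expect the hardest parts to be (e) and (d), specifically producing an auxiliary curve on which the integrality is genuine and the degree bound is sharp. The danger is that the unknowns $v_1,v_2$ lie in $\Z^2$ rather than in a ring of $S$-integers of a number field, so a transcendental-coefficient reduction is needed. In (e), an additional step is to identify the degenerate components with the endomorphism condition.
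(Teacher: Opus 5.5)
Items (d) and (e) do not go through as written.

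\textbf{Item (d).} The decisive step fails. You quote Levin's theorem as saying that infinitely many integral points of degree $\le d$ force gonality $\le 2d$. That $2d$ bound is the shape of Frey's theorem on points of bounded degree, not Levin's integral-point theorem. With it you only get $\mathrm{gon}(\cC_3)\le 2d_1d_2$, which is compatible with the hypothesis $\mathrm{gon}(\cC_3)>d_1d_2$, so your ``this forces $\mathrm{gon}(\cC_3)\le d_1d_2$'' does not follow.

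What you need is Levin's actual statement (Theorem~\ref{Levin}): $\cC_3$ has infinitely many $S$-integral points of degree $\le d$ only if some morphism $\widetilde{\cC_3}\to\pr^1$ of degree $\le d$ sends the points at infinity into $\{0,\infty\}$. Gonality $>d_1d_2$ rules this out.

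Your auxiliary $W$ is also a threefold, so there is no ``relevant curve component''. The paper's route (Proposition~\ref{conLevin}) is direct. A triple intersection $P$ lies in $\tau_{-v_1}(\cC_1)\cap\tau_{-v_2}(\cC_2)$, a Galois-stable set of at most $d_1d_2$ points. Hence $P$ itself is an $S$-integral point of $\cC_3$ of degree $\le d_1d_2$, and Levin is applied to $\cC_3$. This is over a number field; real coefficients are handled by the specialization of Section~\ref{Secspecialization}, not by a ``real analogue''.

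\textbf{Item (e).} Your dichotomy breaks down. The hypothesis $\mathrm{gon}>\lfloor d/2\rfloor$ only controls, via Levin, integral points of degree $\le\lfloor d/2\rfloor$. Triple intersections on a translate $x=m$ of the line have degree up to $d$, so ``otherwise Levin applies as in (d)'' gives no finiteness.

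In the paper the gonality hypothesis plays a different role. A reducible fibre $\{x=m\}\cap\cC_2$ would contain a point of degree $\le\lfloor d/2\rfloor$. So for all but finitely many $m$ the fibre is a single Galois orbit of size $d$. A common root then forces the two monic degree-$d$ polynomials in $y$, cut out on that vertical line by the translated curves, to be equal.

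Equating coefficients and eliminating the vertical shift gives a separated system $q_i(-a_{d-1}(l)/d,l)=b_i(m)$ with infinitely many integer solutions. Siegel together with the Bilu--Tichy classification (Theorem~\ref{BiluTichy}) then writes the two sides as $\varphi_i\circ f\circ\lambda$ and $\varphi_i\circ g\circ\mu$ for a standard pair $(f,g)$. This is what produces the two endomorphisms onto a common curve. Your ``common quotient'' case is asserted with no mechanism behind it; Bilu--Tichy is the missing idea.

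\textbf{Items (a)--(c)} are essentially acceptable.
\begin{itemize}
\item Your (a) is a different, analytic argument: an accumulation point, finitely many translates near it, then Bezout. It is valid once Theorem~\ref{compactproj} ensures only finitely many translates meet a bounded set, and provided the two curves are not $\Z^2$-translates of each other. The paper instead checks three explicit cases.
\item In (b), $\Q$-independence of the four coefficients gives at most one integral solution, which suffices for finiteness. However, ``exactly when'' overclaims. With one rational slope a dependence always exists, and the paper's $\mathrm{GL}_2(\Q)$ analysis is needed. Infinitude also requires an irrational slope and the homotheties of Theorem~\ref{Homotheties}.
\item In (c) no slicing is needed. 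Intersections of translates of the two lines have coordinates in a fixed finitely generated ring, so they are integral points of $\cC_3$ itself, and Theorem~\ref{SiegelThm} applies directly.
\end{itemize}
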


Our methods will primarily involve studying integral points on algebraic curves. More precisely, for proving finiteness results, we will apply Siegel's Theorem~\ref{SiegelThm} on integral points on curves, a generalization due to Levin (Theorem~\ref{Levin}) for integral points of higher degree, and a theorem of Bilu and Tichy on Diophantine equations of the form $f(x)=g(y)$ where $f,g$ are polynomials with algebraic coefficients (Theorem~\ref{BiluTichy}). Since these results require the curves to be defined over number fields, in Section~\ref{Secspecialization} we provide a specialization argument that reduces the general case to the case of curves defined over number fields.

We construct examples of triples of curves admitting infinitely many orbits intersecting all of them in the following cases, which also show that some hypotheses of our theorems cannot be removed:
\begin{compactenum}[(i)]
    \item Three lines whose slopes satisfy some prescribed dependence relation over $\Q$ (see Theorem~\ref{infinitelines}).
    \item Two lines and a curve of genus zero of arbitrary degree and either one or two points at infinity (See Examples~\ref{onepoint},~\ref{twopoints} and Theorems~\ref{mainparabolas},~\ref{mainhyperbolas}).
    \item Three conics (See Examples~\ref{threehyperbolas},~\ref{threeparabolas}).
    \item One line, one rational curve of degree $d$ and a curve of positive genus with gonality $d$ (See Example~\ref{smallgon}).
    \item One line and two cubic curves which are sent to the same curve by two endomorphisms of $\A^2$ (See Example~\ref{linetwocub}).
\end{compactenum}

Our techniques do not appear to extend to the remaining cases of Question~\ref{QuestionTorus}. For instance, when the three curves have the same degree $d>2$, and at least two of them have non-compact images on the real torus, we do not have a conjectural answer.

\subsection{Dynamical aspects of Question~\ref{QuestionTorus}}

We can rewrite Question~\ref{QuestionTorus} in the setting of the dynamical Mordell-Lang problem.

\begin{que}[The dynamical Mordell-Lang problem {\cite[Question 3.6.0.1]{BGT2016}}]\label{MLproblem}
    Let $X$ be a quasi-projective variety defined over $\mathbb{C}$, let
    $\Phi_1, \ldots, \Phi_r$ be commuting endomorphisms of $X$, let
    $\alpha \in X(\mathbb{C})$, and let $V \subseteq X$ be a subvariety.
    Is it true that the set of tuples
    $$ (n_1, \ldots, n_r) \in \mathbb{N}_0^r\quad \text{satisfying}\quad \Phi_1^{n_1} \cdots \Phi_r^{n_r}(\alpha) \in V $$
    is a union of at most finitely many sets of the form
    $$ \gamma + (H \cap \mathbb{N}_0^r), $$
    where $\gamma \in \mathbb{N}_0^r$ and $H \subseteq \mathbb{Z}^r$ is a subgroup?
\end{que}

Every algebraic plane projective curve of degree $d>0$ can be identified (as a set) as the zeroes of a homogeneous polynomial in three variables of degree $d$. Fixing an affine plane $\A^2\subset \pr^2$, the set of affine curves of degree $d$ can be identified as an open subset $\mathcal{U}_d\subset \pr^{\frac{d(d+3)}{2}}$, since they are the curves not containing the line at infinity. Given two integers $d, e$, we can consider the following action of $\Z^4$ on $X:=\mathcal{U}_d\times \mathcal{U}_e$
$$\Z^4\times (\mathcal{U}_d\times \mathcal{U}_e)\rightarrow \mathcal{U}_d\times \mathcal{U}_e\colon ((h,k,m,n),(\cC_1,\cC_2))\mapsto (\tau_{(h,k)}(\cC_1),\tau_{(m,n)}(\cC_2)).$$
Moreover, given an affine plane curve $\cC$, we associate to it the subvariety $Y\subset X$ defined as 
$$Y_\cC:=\left\{(\cC_1,\cC_2)\in X\mid \cC_1\cap\cC_2\cap\cC\neq \emptyset\right\}.$$ 
If we fix three affine plane curves $\cC_1,\cC_2,\cC_3$ with $\deg \cC_1=d>0$, $\deg \cC_2= e>0$, we fix a point $(\cC_1,\cC_2)\in X$ and a subvariety $Y_{\cC_3}\subset X$. With this correspondence, proving the finiteness of the $\Z^2$-orbits intersecting the curves $\cC_1,\cC_2,\cC_3$, we trivially obtain that the answer to the dynamical Mordell-Lang problem is affirmative provided that every orbit intersects the curves only finitely many times. For instance, the two formulations are equivalent if the three curves are hyperbolic.  

\subsection{Related questions}

Question~\ref{QuestionTorus} is a particular case of the following question, raised by Corvaja and Zannier in~\cite{CZ2023}.

\begin{que}\label{QuestionCZ}
    Given an algebraic surface with a finitely generated commutative semigroup $\Gamma$ of
    (rational) endomorphisms, and given three curves on the surface, what can be said assuming the existence of infinitely many $\Gamma$-orbits intersecting each of the curves?
\end{que}

In the same paper, they address Question~\ref{QuestionCZ} when the surface is an elliptic surface, that is, a surface fibred in elliptic curves, and $\Gamma$ is generated by the translation by a section of the elliptic fibration, relating this issue to finiteness results for elliptical billiards. Moreover, given a linear automorphism $\beta\in\mathrm{Aut}(\pr^2)$, they classify triples of lines in $\pr^2(\C)$ for which there exist infinitely many $\beta$-orbits that intersect all of them (\cite[Proposition 4.1]{CZ2023}).

They also wonder about a dynamical explanation for the existence of infinitely many orbits. Indeed, they proved that whenever there are infinitely many orbits intersecting the three lines, they are contained in a finite number of orbits for an endomorphism of $\pr^2$ that commutes with $\beta$.

Following this dynamical perspective, we also seek dynamical reasons for the existence of infinitely many orbits that intersect all three curves. However, we cannot provide a uniform explanation for all our examples. We will prove in Theorem~\ref{infSaction} that under suitable hypotheses, there are infinitely many orbits for the action of a non-finitely generated semigroup of endomorphisms of $\A^2$ that intersect three affine lines. Conversely, Theorem~\ref{lineandsamedegree} implies that, if infinitely many orbits intersect one line and two curves of the same degree $d>2$, then the problem trivializes after enlarging the acting semigroup by two additional endomorphisms. 

\section*{Acknowledgments}

This work originated from the author's Master's thesis, completed under the supervision of Pietro Corvaja. The author is deeply grateful to him for his guidance and for his many valuable suggestions and comments on earlier drafts of this paper. The author would also like to thank Amos Turchet for many stimulating discussions on this topic, Alessio Caminata, Giacomo Bortolussi, and Ilaria Cruciani. This work was partially supported by the \virg{National Group for Algebraic and Geometric Structures, and their Applications} (GNSAGA - INdAM) and by Sapienza University of Rome, project number B83C25004300005.

\section{Projection of algebraic curves onto the real torus}\label{secproj}

In this section, we classify the cases in which the projection of an algebraic plane curve on the real torus is compact. In particular, we prove the following theorem as a consequence of Propositions~\ref{irrasym},~\ref{asymnotcontained},~\ref{noasym}.

\begin{thm}\label{compactproj}
    Let $\cC\subset \R^2$ be a real irreducible algebraic curve. The projection of $\cC$ on the real torus is compact if and only if $\cC$ is compact or $\cC$ is a line with a rational slope. 
\end{thm}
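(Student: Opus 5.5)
The \emph{if} direction is immediate. If $\cC$ is compact, its image under the continuous projection $\pi\colon\R^2\to\R^2/\Z^2$ is compact. If $\cC$ is a line $\{P+tw\}$ with $w=(p,q)\in\Z^2$ primitive, then $\pi(P+tw)$ is periodic in $t$ with period $1$. Hence $\pi(\cC)=\pi(\{P+tw: t\in[0,1]\})$, which is compact.

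For the \emph{only if} direction, I assume $\cC$ is non-compact and is not a line of rational slope, and show that $\pi(\cC)$ is not closed. Since the real points of $\cC$ form an unbounded semialgebraic set, there is a real branch going to infinity. Such a branch has a well-defined asymptotic direction $[a:b]\in\pr^1(\R)$, namely a real point of $\cC$ at infinity. I then split into cases according to this direction. These cases should correspond to Propositions~\ref{irrasym}, \ref{asymnotcontained}, \ref{noasym}.

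\textbf{Case 1: irrational asymptotic direction.} Parametrize the branch by a Puiseux expansion, for instance $y=(b/a)x+c+o(1)$, or more generally $y=(b/a)x+\sum_{k}c_kx^{e_k}$ with $e_k<1$. The goal is to show that $\pi(\cC)$ is dense in a region larger than itself but not equal to it. A more concrete plan is the following. A one-dimensional compact set cannot be the whole torus, since a curve image has measure zero. On the other hand, I would show that $\overline{\pi(\cC)}$ has nonempty interior or contains a full translate of the irrational line's closure, which is the whole torus by Kronecker. If there is a genuine linear asymptote $y=(b/a)x+c$, the points $\pi(x,(b/a)x+c+o(1))$ accumulate on the whole closure of the projected irrational line, which is dense, by Kronecker/Weyl. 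So $\overline{\pi(\cC)}=\R^2/\Z^2$. Yet $\pi(\cC)$ is a countable union of images of compact arcs, which are nowhere dense and of measure zero. By Baire, $\pi(\cC)\neq\R^2/\Z^2$, so it is not closed. If instead the branch has a parabolic shape (no asymptote, e.g.\ $y=(b/a)x+cx^{e}+\dots$ with $0<e<1$), the same conclusion should hold. Fix a long window of length $L$. On it the branch is $C^1$-close to a segment of slope $b/a$, shifted by a slowly varying amount. Its projection therefore comes within $\varepsilon$ of every point once $L$ is large, by uniform equidistribution of irrational linear flows. Again the closure is the whole torus.

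\textbf{Case 2: rational asymptotic direction.} Change coordinates by an element of $\mathrm{SL}_2(\Z)$ so that the direction becomes horizontal. This is legitimate because such a map preserves $\Z^2$ and induces a homeomorphism of the torus. The branch is then $y=g(x)$ as $x\to\infty$, with $g(x)=o(x)$ given by a Puiseux series. There are three subcases. First, if $g(x)\to c$ with $g$ non-constant, then the points $\pi(n+t,g(n+t))$ accumulate onto the horizontal circle $y=c$. But points of that circle are approached by, and typically not equal to, points of $\pi(\cC)$. A cleaner argument is that if $\pi(\cC)$ were closed, it would contain the circle $\{y=c\}$. Then $\cC$ would meet the lines $y=c+m$ in infinitely many points, which forces $\cC$ to contain such a line by Bézout and irreducibility, so $\cC$ is a line of rational slope. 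Second, if $g(x)\to\pm\infty$ sublinearly, the closure contains the whole torus, by the same windowing argument: the branch crosses all horizontal levels slowly and sweeps full circles. Then Baire gives the contradiction. Third, if $g(x)$ tends to a finite limit along a curve, that is the first subcase again.

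The main obstacle is the sublinear growth case, where the branch is not eventually close to a line: I must check that the projection really becomes dense. I would handle it by extracting, from the Puiseux expansion, that the derivative converges to the asymptotic slope. Short arcs then look like long straight segments, and their projections fill the torus, or a full horizontal circle, up to $\varepsilon$. Every density statement is then finished by Bézout plus the measure-zero/Baire argument.
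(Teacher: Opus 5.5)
Your proof is correct, but it is organized differently from the paper's. The paper splits according to whether a branch has an asymptote:
\begin{itemize}
\item an irrational-slope asymptote gives density (Proposition~\ref{irrasym});
\item any asymptote $\ell$ that is not a component gives $\pi(\ell)\subset\overline{\pi(\cC)}$, and hence non-closedness by the countability/B\'ezout argument (Proposition~\ref{asymnotcontained});
\item a branch with no asymptote gives density via Weyl's theorem, applied to the truncated Puiseux parametrization $(t^n,p(t))$ sampled at $t=m+y$ (Proposition~\ref{noasym}).
\end{itemize}

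You split instead by whether the asymptotic direction is rational or irrational. Your finite-limit subcase is exactly the argument of Proposition~\ref{asymnotcontained}, and your irrational-asymptote subcase is Proposition~\ref{irrasym}.

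The genuinely different ingredient is your windowing argument for branches without an asymptote. Since the derivative of the Puiseux expansion tends to the asymptotic slope, on windows of fixed length the branch is uniformly close to a segment of that slope. Density then follows in two ways. For an irrational direction, it comes from the uniform $\varepsilon$-density of long segments of an irrational linear flow. For a rational direction, it comes from $g\to\pm\infty$ continuously with $g'\to 0$, so the branch sweeps almost-full horizontal circles at every height modulo $1$.

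This is more elementary than Weyl's theorem. It also avoids checking the rational-independence hypothesis of Theorem~\ref{weylpolgen} along a particular sampling. For the parabola $x=y^2$ the truncation is $(t^2,t)$, and the second coordinate of $((m+y)^2,m+y)$ is constant modulo $1$, so no single sampled sequence is dense. What Weyl buys the paper is equidistribution along explicit sequences, which is more than this theorem needs.

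You also make explicit a step the paper leaves implicit: the image of a curve has measure zero, so it cannot be the whole torus. That is what converts density into non-closedness.
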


Theorem~\ref{compactproj} is applied to Question~\ref{QuestionTorus} in Theorem~\ref{compactintersections}. Indeed, we prove that if two irreducible curves not in the same orbit for the action of $\Z^2$ have both compact images on the real torus, then they intersect only finitely many times on the real torus.

Our arguments rely on the study of the asymptotic behavior of algebraic curves via Puiseux series as explained in~\cite{BP2014}, and on Weyl's Theorem on uniformly distributed polynomial sequences. Throughout this section, we will denote by $\pi \colon \mathbb{R}^2 \rightarrow \mathbb{R}^2 / \mathbb{Z}^2$ the canonical projection onto the real torus.

\subsection{Asymptotic behavior of algebraic curves}

In this section, following~\cite{BP2014}, we recall the notion of \emph{infinity branch} and describe the behavior of algebraic plane curves at infinity. 

\begin{defi}
    A \emph{Laurent series} over $\C$ is a formal sum $\sum_{i\geq r}a_it^i$ where $r\in\Z$ and $a_i\in\C$ for all $i\geq r$. The field of the Laurent series is denoted by $\C((t))$. The field of the \emph{Puiseux series} over $\C$ is the set 
    $$\mathrm{Puis}_\C(t)=\bigcup_{n=1}^\infty \C((t^{1/n}))$$
    endowed with the natural operation of sum and product of series. Given a Puiseux series $\phi(t)$, the minimum integer $n$ such that $\phi(t^n)\in\C((t))$ is called the ramification index of $\phi$ and is denoted by $e(\phi)$.
\end{defi}

\begin{thm}[Newton's Theorem]
    The field $\mathrm{Puis}_\C(t)$ is algebraically closed.
\end{thm}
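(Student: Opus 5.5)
The plan is to show that every nonconstant polynomial $P(y)\in \mathrm{Puis}_\C(t)[y]$ has a root in $\mathrm{Puis}_\C(t)$, by the Newton polygon method. This suffices, since any nonconstant polynomial then factors completely by induction on the degree. After dividing by the leading coefficient we may assume $P$ is monic. All coefficients lie in a single field $\C((t^{1/n}))$, and substituting $t\mapsto t^{n}$ lets us assume they lie in $\C((t))$. Next write $y=t^{-m}z$ and multiply by $t^{md}$, with $m$ large. This makes all coefficients lie in $\C[[t]]$ while keeping the polynomial monic. Finally, replacing $y$ by $y-a_{d-1}/d$ (Tschirnhaus), we may assume the coefficient of $y^{d-1}$ vanishes.

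The core step is an induction on $d=\deg P$; the case $d=1$ is trivial.
\begin{itemize}
\item Write $P(y)=y^d+\sum_{i\le d-2}a_i(t)y^{i}$ with $a_i\in\C[[t]]$. If all $a_i=0$, then $y=0$ is a root.
\item Otherwise set $\lambda=\min_i \mathrm{ord}(a_i)/(d-i)\in\Q_{\geq 0}$, and write $\lambda=p/q$.
\item Substitute $t=s^{q}$ and $y=s^{p}w$, then divide by $s^{pd}$. This gives a monic polynomial $Q(w)\in\C[[s]][w]$ whose reduction $\bar Q(w)=Q(w)|_{s=0}$ is not $w^d$, because some $a_i$ attains the minimum.
\item Since $\bar Q$ still has vanishing $w^{d-1}$ coefficient, it cannot equal $(w-c)^d$ for any $c$: $c\neq 0$ would give a nonzero $w^{d-1}$ term, and $c=0$ would give $\bar Q=w^d$. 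Hence $\bar Q$ has at least two distinct roots in $\C$, and $\bar Q=g\,h$ with $g,h\in\C[w]$ monic, coprime, and of degrees strictly less than $d$.
\end{itemize}

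By Hensel's lemma over the complete local ring $\C[[s]]$, which applies because $g$ and $h$ are coprime, this factorization lifts to $Q=G\,H$ with $G,H\in\C[[s]][w]$ monic of degrees $<d$. By the induction hypothesis, applied to $G$ over $\C((s))\subset \mathrm{Puis}_\C(s)$, $G$ has a root $w(s)$ in $\mathrm{Puis}_\C(s)$. Undoing the substitutions, $y=s^{p}w(s)$ with $s=t^{1/q}$ is a Puiseux series in $t$ and a root of $P$.

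The main obstacle is making sure the construction lands in $\mathrm{Puis}_\C(t)=\bigcup_n\C((t^{1/n}))$, i.e., that the root has bounded denominators rather than being an infinite Newton–Puiseux iteration with exponents of unbounded denominator. The Hensel-based induction avoids this: each step introduces a single finite ramification $t=s^{q}$, and there are at most $d$ steps because the degree drops strictly. So the denominators stay bounded, of order at most about $d!$.

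An alternative route, if one prefers to avoid Hensel, is to use that every finite extension of $\C((t))$ is $\C((t^{1/n}))$. The proof of this goes through the fact that the residue field $\C$ is algebraically closed of characteristic $0$, so every finite extension is totally and tamely ramified, hence generated by an $n$-th root of a uniformizer. I would present the Hensel argument since it is self-contained.
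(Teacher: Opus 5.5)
Your proof is correct. The reductions (monic, coefficients in $\C[[t]]$, vanishing $y^{d-1}$-coefficient) are legitimate in characteristic $0$. The choice $\lambda=\min_i \mathrm{ord}(a_i)/(d-i)$ makes $Q$ integral with $\bar Q\neq w^d$, and the depressed form rules out $\bar Q=(w-c)^d$. Hensel's lemma over $\C[[s]]$ then splits off a monic factor of strictly smaller degree, to which the induction hypothesis (the full statement in lower degree) applies.

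The paper gives no proof of this statement. It quotes Newton's theorem as classical background for the theory of infinity branches, so there is no argument of the paper to compare yours with.

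Compared with the classical Newton--Puiseux algorithm, your route has a clear advantage. The classical method builds the root term by term and needs a separate argument that the denominators of the exponents eventually stabilize. Your Hensel-based induction gets bounded ramification for free, since every ramification $t=s^q$ (with $q\le d$) is paid for by a strict drop in degree.

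One addition is worth making. Right after this statement, the paper uses that the root $\phi(t)$ actually converges near $t=0$, which the formal statement does not provide. Your argument transfers verbatim to the field $\bigcup_n \C\{t^{1/n}\}[t^{-1/n}]$ of convergent Puiseux series. This works because $\C\{s\}$ is also Henselian (e.g.\ via the implicit function theorem or Weierstrass preparation), and all your substitutions preserve convergence. It follows that every formal Puiseux root of a polynomial with convergent coefficients is itself convergent, which is exactly what the paper relies on.
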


\begin{defi}
Let $\cC$ be a real plane algebraic curve over $\C$ defined implicitly by the irreducible polynomial $f(x, y) \in \mathbb{R}[x, y]$. Let $A$ and $B$ be series in $\mathbb{C}((t))$ such that: 
\begin{itemize}
    \item[(i)] $f(A(t), B(t)) = 0$ where both series converge, 
    \item[(ii)] not both $A$ and $B$ are constants.
\end{itemize}
Then $P = (A, B)$ is called an \emph{(affine) local parametrization} of $\cC$. Moreover, if $A, B$ are both regular at 0, the point $P(0) = (a, b) \in \cC$ is called the center of $P$.
\end{defi}

\begin{defi}
     Given a local parametrization $(X, Y)$ of a plane curve $\cC$, the set of all points $(X(t), Y (t))$ obtained by allowing $t$ to vary within some neighborhood of $0$ where $X(t)$ and $Y(t)$ converge is called a \emph{branch} of $\cC$.
\end{defi}

Let $\cC$ be the irreducible plane curve defined by the polynomial $f(x,y)\in\R[x,y]$. Let $\widetilde{\cC}$ be the projective curve defined by the homogeneous polynomial $F(X, Y, Z)=0$ which satisfies $F(x,y,1)=f(x,y)$ and $\deg F=\deg f$. Let $P=(a:b:0)$ be a point at infinity of $\widetilde{\cC}$, we suppose $a\neq 0$ so we can set $P=(1:m:0)$.

We parameterize the curve in an affine chart that contains $P$. We introduce two new variables
$$u:=\frac{Y}{X}\qquad v:=\frac{Z}{X},$$
 define $g(u,v):=F(1:u:v)$, and compute a solution $(\phi(t),t)$ of $g(u,v)=0$ in $\mathrm{Puis}_\C(t)$. Then we have $g(\phi(t),t)=0$ in a neighborhood of $t=0$ where $\phi(t)$ converges, that is, there exist $M\in\R^+$ such that
$$F(1:\phi(t):t)=0 \quad \text{for }t\in\C,|t|<M.$$
We get 
$$f(t^{-1},t^{-1}\phi(t))=F(t^{-1}:t^{-1}\phi(t):1)=0\quad\text{for }t\in\C,|t|<M,$$
then defining $z:=t^{-1}$ and $r(z)=z\phi(z^{-1})$ we obtain
$$f(z,r(z))=0\quad\text{for }z\in\C,|z|>M^{-1}.$$

We can write $\phi(t)=m+\sum_{i=1}^{\infty}a_it^{n_i/n}$ where $a_i\in\C^*$, $n,n_i\in\N$ and $n_i<n_{i+1}$ for all $i\in\N$. Then we can write 
$$r(z)=mz+\sum_{i=0}^\infty a_iz^{1-n_i/n}.$$ 
In particular, there are only finitely many terms with non-negative exponent. We remark that, if $m\in\R$, that is, the point at infinity is real, then $r(z)$ has real coefficients. 

\begin{defi}
    Let $f(t)=\sum_{i\geq r}a_ix^{i/n}\in\mathrm{Puis}_\C(t)$ be a Puiseux series with ramification index $n$. The \emph{conjugates} of $f$ are the Puiseux series $\sigma_\varepsilon(f)$ defined by 
    $$\sigma_\varepsilon(f)=\sum_{i\geq r}\varepsilon^ia_ix^{i/n}$$
    where $\varepsilon$ is a complex number satisfying $\varepsilon^n=1$. The set of all conjugates of $f$ is called \emph{conjugacy class} of $f$.
\end{defi}

Since $e(\phi)=n$, there are $n$ different series in its conjugacy class. Let $\phi_1,\dots,\phi_n$ be these series and define $r_i(z)=z\phi_i(z^{-1})$, which converges for $|z|>M_i$. 

\begin{defi}
    The set $B = \bigcup_{i=1}^N L_i$ where
$$ L_i = \left\{(z, r_i(z)) \in \mathbb{C}^2 : z \in \mathbb{C}, |z| > \max\{M_1,\ldots,M_n\}\right\}$$
is called an \emph{infinity branch} of the affine plane curve $\cC$. The subsets $L_1, \ldots, L_N$ are called the \emph{leaves} of the infinity branch $B$.

\end{defi}

\begin{defi}
    Given two leaves, $L = \{(z, r(z)) \in \mathbb{C}^2 : z \in \mathbb{C}, |z| > M\}$ and $L' = \{(z, r'(z)) \in \mathbb{C}^2 : z \in \mathbb{C}, |z| > M\}$, we say that they are convergent if $\lim_{z \to \infty} (r(z) - r'(z)) = 0$.  Two infinity branches, $B$ and $B'$, are convergent if there
exist two convergent leaves $L \subseteq B$ and $L' \subseteq B'$.
\end{defi}

\begin{pro}\label{convergent}
    Two leaves 
    $$L = \{(z, r(z)) \in \mathbb{C}^2: z \in \mathbb{C}, |z| > M\}\quad\text{and}\quad L' = \{(z, r'(z)) \in \mathbb{C}^2: z \in \mathbb{C}, |z| > M\}$$ are convergent if and only if the terms with non-negative exponent in the series $r(z)$ and $r'(z)$ are the same.
\end{pro}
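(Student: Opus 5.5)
We write $r(z)=p(z)+q(z)$ and $r'(z)=p'(z)+q'(z)$. Here $p,p'$ collect the finitely many terms with non-negative exponent, and $q,q'$ collect the terms with negative exponent. From the description above, $r(z)=mz+\sum_i a_i z^{1-n_i/n}$ with $n_i$ strictly increasing. So only finitely many exponents $1-n_i/n$ are $\ge 0$, and the remaining exponents are negative and tend to $-\infty$. The plan is to show that $q(z)\to 0$ and $q'(z)\to 0$ as $|z|\to\infty$. Once that holds, $r-r'\to 0$ if and only if $p-p'\to 0$. The problem then reduces to a statement about finite sums of powers with non-negative exponents.

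\emph{Step 1: the tails vanish.} We have $q(z)=z\,\psi(z^{-1})$, where $\psi(t)=\sum_{n_i>n}a_i t^{n_i/n}$ is the part of $\phi$ with exponents $>1$. Since $\phi$ converges for $|t|<M$, so does $\psi$. Write $\psi(t)=t^{(n+1)/n}\chi(t^{1/n})$, where $\chi$ is a convergent power series in $s=t^{1/n}$. Then $\chi$ is bounded near $s=0$. Hence $|q(z)|\le C|z|^{1-(n+1)/n}=C|z|^{-1/n}\to 0$. The same argument applies to $q'$, using the common denominator of the ramification indices of $r$ and $r'$.

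\emph{Step 2: finite sums.} By Step 1, $L$ and $L'$ are convergent if and only if $D(z):=p(z)-p'(z)\to 0$. Here $D$ is a finite sum $\sum_j c_j z^{\alpha_j}$ with distinct exponents $\alpha_j\ge 0$. If $p=p'$ then $D\equiv 0$ and the leaves converge. Conversely, suppose $D\not\equiv 0$, and let $\alpha$ be its largest exponent, with coefficient $c\ne 0$. Then $D(z)=cz^{\alpha}\bigl(1+o(1)\bigr)$ as $|z|\to\infty$ along any fixed determination of the roots. Thus $|D(z)|\to|c|$ if $\alpha=0$, and $|D(z)|\to\infty$ if $\alpha>0$. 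In neither case does $D$ tend to $0$, so the leaves are not convergent.

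\emph{Main obstacle.} The main subtlety is making sense of the limit for fractional powers of complex $z$. The leaves are defined with a fixed choice of branch of $z^{1/n}$, chosen consistently when the conjugates $\sigma_\varepsilon$ are formed. In Step 2 I would compare the two series after passing to a common ramification index $N$. Writing $z=w^N$, both $r$ and $r'$ become Laurent series in $w^{-1}$, so $D$ becomes a genuine Laurent polynomial in $w$. The dominant-term argument is then unambiguous. If needed, it suffices to take the limit along the real direction $z\to+\infty$ to exhibit non-convergence.
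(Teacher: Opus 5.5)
The paper does not prove this proposition: it is recalled from \cite{BP2014} as background, so there is no internal proof to compare against. Your argument is correct and is the standard one.

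The negative-exponent tail equals $z^{-1/n}$ times a convergent power series in $z^{-1/n}$, hence is $O(|z|^{-1/n})$. After substituting $z=w^N$ with compatible choices of roots, the non-negative part of $r-r'$ becomes a polynomial in $w$, and such a polynomial tends to $0$ only if it vanishes identically. The compatible-roots convention is needed for the statement to make sense at all, and you rightly flag it.

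One cosmetic point: Step~1 needs no common denominator, since each tail can be handled with its own ramification index. The common index $N$ is genuinely needed only in Step~2.
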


\subsection{Dense curves in the real torus}

In this section, we prove several density results concerning the projections of algebraic curves onto the real torus. We begin by studying curves with asymptotes, relating this case to the following well-known result about affine lines.

\begin{pro}\label{denselines}
    Let $\ell\subset \R^2$ be an affine line. Then, the projection $\pi$ restricted to $\ell$ is injective if and only if $\ell$ has irrational slope. Moreover, if $\ell$ has an irrational slope, then for each half-line $\ell'\subset \ell$, the projection $\pi(\ell')$ is dense in the torus; otherwise, $\pi(\ell)$ is compact and closed.  
\end{pro}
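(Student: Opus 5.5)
I will parametrize the line as $\ell=\{P_0+tw : t\in\R\}$ with direction vector $w=(1,\alpha)$. The vertical case $w=(0,1)$ has rational slope and is handled the same way as below. The proof then splits according to whether $\alpha$ is rational or irrational, and both parts reduce to elementary facts about the linear flow $t\mapsto \pi(P_0+tw)$ on $\R^2/\Z^2$.

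\emph{Injectivity.} The projection $\pi|_\ell$ fails to be injective exactly when there are $t\neq s$ with $(t-s)w\in\Z^2\setminus\{0\}$. Writing $(t-s)(1,\alpha)=(p,q)$ with $p,q$ integers, we get $p=t-s\neq 0$, and therefore $\alpha=q/p\in\Q$. Conversely, if $\alpha=q/p$ with $p>0$, then the points $P_0$ and $P_0+pw=P_0+(p,q)$ are distinct and have the same image. So $\pi|_\ell$ is injective if and only if the slope is irrational. For a vertical line, $P_0$ and $P_0+(0,1)$ already have the same image.

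\emph{Rational slope.} If $\alpha=q/p$, then $\pi(P_0+tw)$ is periodic in $t$ with period $p$. Hence $\pi(\ell)=\pi(\{P_0+tw: t\in[0,p]\})$ is the continuous image of a compact interval, so it is compact, and therefore closed in the Hausdorff space $\R^2/\Z^2$.

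\emph{Irrational slope.} Take a half-line $\ell'=\{P_0+tw:t\geq t_0\}$; after moving $P_0$ we may assume $t_0=0$. I intersect with the vertical circle $x\equiv x_0 \pmod 1$, where $x_0$ is the first coordinate of $P_0$. The points of $\ell'$ with $t=n\in\N$ project onto this circle with second coordinate $y_0+n\alpha \bmod 1$. By Kronecker's theorem (or the equidistribution case of Weyl's theorem, which the paper uses anyway), the sequence $\{y_0+n\alpha\}_{n\ge 0}$ is dense in $[0,1)$. Now fix a target point $(a,b)$ and $\varepsilon>0$. Choose $s\in[0,1)$ with $x_0+s\equiv a$, and then choose $n$ so that $y_0+n\alpha+s\alpha$ is within $\varepsilon$ of $b$ modulo $1$; this is possible because shifting a dense set by the constant $s\alpha$ keeps it dense. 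The parameter $t=n+s\ge 0$ then gives a point of $\pi(\ell')$ within $\varepsilon$ of $(a,b)$, so $\pi(\ell')$ is dense.

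The only step with real content is the density of $\{n\alpha\}$ along the forward half-line only. I would prove it directly with a pigeonhole argument. Among $\{\alpha\},\dots,\{(N+1)\alpha\}$, two values lie within $1/N$ of each other, which gives some $k\geq 1$ with $0<\|k\alpha\|<1/N$; it is nonzero because $\alpha\notin\Q$. The positive multiples of $k$ then step around the circle monotonically with step size less than $1/N$, so their values are $1/N$-dense. This shows that no two-sided argument is needed, which is exactly what the half-line statement requires.
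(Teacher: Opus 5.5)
The paper gives no proof of this proposition; it cites it as a well-known fact about lines. The only related tool it uses is Weyl's theorem, invoked later in the proof of Proposition~\ref{noasym}.

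Your argument is a correct, self-contained proof.
\begin{itemize}
\item The injectivity criterion via $(t-s)(1,\alpha)\in\Z^2\setminus\{0\}$ is sound.
\item The periodicity argument for compactness in the rational case is sound.
\item Reducing density to the density of $\{n\alpha\}_{n\ge 1}$ modulo $1$, by slicing along the vertical circle and shifting by $s\alpha$, is sound.
\end{itemize}
Your pigeonhole argument is a good elementary substitute for Kronecker's or Weyl's theorem. It gives density along forward multiples $mk$ only, which is exactly what the half-line statement needs. It avoids equidistribution entirely and makes the one-sidedness explicit, which a bare citation leaves implicit.

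One small omission. You treat only half-lines $\{P_0+tw : t\ge t_0\}$ with $w=(1,\alpha)$. Moving $P_0$ shifts the starting point but does not reverse orientation, so half-lines $\{P_0+tw : t\le t_0\}$ are not yet covered. Either of the following fills this:
\begin{itemize}
\item Rerun the argument with direction $(-1,-\alpha)$. You then need density of $\{-n\alpha\}_{n\ge 1}$ modulo $1$, which the same pigeonhole argument gives because $-\alpha$ is irrational.
\item Apply $(x,y)\mapsto(-x,-y)$. It preserves $\Z^2$, so it induces a homeomorphism of the torus, and it sends a backward half-line to a forward half-line of a line with the same slope.
\end{itemize}
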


\begin{pro}\label{irrasym}
    Let $\cC\subset\R^2$ be a real algebraic curve with an asymptote $\ell$ with irrational slope. Then $\pi(\cC)$ is dense in the real torus. 
\end{pro}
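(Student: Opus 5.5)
The plan is to show that near infinity the curve $\cC$ is a small perturbation of the asymptote $\ell$. We then use Proposition~\ref{denselines}: the projection of a half-line of $\ell$ is dense. Density of $\pi(\cC)$ will follow by approximation.

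\textbf{Setting up the branch.} Write $\ell$ as $y=mz+q$ with $m\notin\Q$; we may assume $\ell$ is not vertical. If it were vertical, its slope would not be irrational in any useful sense, or we would swap the roles of the coordinates, since an irrational slope $m$ corresponds to the irrational slope $1/m$ in the swapped coordinates. Since $\ell$ is an asymptote, there is a real infinity branch of $\cC$ with a real leaf $(z,r(z))$, where
$$r(z)=mz+q+\sum_{i} a_i z^{1-n_i/n}, \qquad 1-n_i/n<0 .$$
By Proposition~\ref{convergent}, the terms of $r$ with non-negative exponent are exactly those of $\ell$. Because the point at infinity $(1:m:0)$ is real, $r$ has real coefficients. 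Hence for real $z$ large in absolute value, on at least one side (say $z\to+\infty$, or $z\to-\infty$ after a symmetry), the point $(z,r(z))$ is a real point of $\cC$. Moreover $\epsilon(z):=r(z)-(mz+q)\to 0$ as $z\to+\infty$.

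\textbf{Approximation.} Fix a point $p$ of the torus and $\delta>0$. Choose $Z$ such that $|\epsilon(z)|<\delta/2$ for all $z>Z$. By Proposition~\ref{denselines}, the projection of the half-line $\ell'=\{(z,mz+q): z>Z\}$ is dense in $\R^2/\Z^2$. So there is $z_0>Z$ with $\pi(z_0,mz_0+q)$ within distance $\delta/2$ of $p$. Then $(z_0,r(z_0))\in\cC(\R)$ is at Euclidean distance $|\epsilon(z_0)|<\delta/2$ from $(z_0,mz_0+q)$. Since $\pi$ does not increase distances, $\pi(z_0,r(z_0))$ lies within $\delta$ of $p$. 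As $p$ and $\delta$ are arbitrary, $\pi(\cC)$ is dense.

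\textbf{Main obstacle.} The only delicate point is justifying that the asymptote really comes from a real leaf defined for all sufficiently large real $z$ on some side, with $r(z)$ real there. The Puiseux expansion may involve fractional powers $z^{1-n_i/n}$, which are real only for $z>0$ if $n$ is even, and a given real branch might approach $\ell$ only as $z\to-\infty$. I would handle this by using the definition of asymptote via the BP2014 framework: an asymptote is the line given by the non-negative-exponent terms of a real branch, together with real points escaping to infinity along it. Replacing $z$ by $-z$ (via the reflection $(x,y)\mapsto(-x,-y)$, which preserves $\Z^2$ and irrationality of slopes) reduces to the case $z\to+\infty$, where real $z^{1/n}$ is well defined and the conjugate leaf with real coefficients gives real points of $\cC$.
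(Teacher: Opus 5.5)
Your proposal is correct and follows essentially the same route as the paper: approximate the curve near infinity by a half-line of the irrational-slope asymptote, use Proposition~\ref{denselines} to find a point of that half-line close to the target on the torus, and conclude with the triangle inequality. Your parametrization by the abscissa $z$ puts the point $(z_0,r(z_0))\in\cC$ within $|\epsilon(z_0)|$ of each point $(z_0,mz_0+q)$ of the half-line, which is the direction of approximation the argument needs; the paper's one-line proof leaves this implicit, since it only states that points of the branch are close to $\ell'$.
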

\begin{proof}
    We fix a point $(x,y)\in\R^2/\Z^2$ and a real number $\varepsilon>0$. Then there is an half-line $\ell'\subset \ell$ and an infinity branch $B$ of $\cC$ such that for each point $P\in B$ the distance of $P$ from $\ell'$ is less than $\frac{\varepsilon}{2}$. From Proposition~\ref{denselines}, there exists $Q\in \ell'$ such that the distance from $Q$ to $(x,y)$ on the torus is less than $\frac{\varepsilon}{2}$. Therefore, there exists a point of $B$ which has distance at most $\varepsilon$ from $(x,y)$, that is, $\cC$ is dense on the torus. 
\end{proof}

\begin{pro}\label{ratasym}
    Let $\cC\subset\R^2$ be an irreducible real algebraic curve such that all infinity branches have an asymptote $\ell$ with rational slope. Then $\pi(\cC)$ is not dense in the real torus. 
\end{pro}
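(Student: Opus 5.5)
The plan is to show that $\pi(\cC)$ is contained in the union of a compact set and finitely many closed curves of the torus (projections of rational-slope lines), each of measure zero. Such a union is a closed proper subset of the torus, so $\pi(\cC)$ is not dense.

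First I decompose the curve. Since $\cC$ is algebraic, outside a sufficiently large disc $D_R$ of radius $R$ the real points of $\cC$ lie on finitely many real infinity branches, by the Puiseux description recalled above. Each such branch $B_j$ has, by hypothesis, an asymptote $\ell_j$ with rational slope. This means the terms with non-negative exponent in $r_j(z)$ reduce to $m_jz+c_j$ with $m_j\in\Q$, or the branch is vertical, handled symmetrically with the roles of $x$ and $y$ swapped. So along the branch the distance from $B_j$ to $\ell_j$ tends to $0$. The compact part $\cC\cap D_R$ is a real semialgebraic set of dimension at most one. Hence $\pi(\cC\cap D_R)$ is a compact set of Lebesgue measure zero, being a finite union of images of rectifiable arcs.

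Next I treat the branches. By Proposition~\ref{denselines}, $\pi(\ell_j)$ is a closed curve $\gamma_j$ in the torus, since $\ell_j$ has rational slope. Because the distance from $B_j$ to $\ell_j$ tends to $0$, every accumulation point of $\pi(B_j)$ lies in $\gamma_j$. Consequently the closure of $\pi(B_j)$ is contained in $\pi(B_j)\cup\gamma_j$. Moreover $\pi(B_j)$ itself has measure zero, being a countable union of projections of arcs of finite length.

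To conclude, the closure of $\pi(\cC)$ is contained in $\pi(\cC)\cup\bigcup_j\gamma_j$. This is a finite union of sets of measure zero, together with the countable-union pieces, so it has measure zero. A set of measure zero cannot be the whole torus, and therefore $\pi(\cC)$ is not dense.

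The main obstacle is making rigorous that the branch really converges to its asymptote uniformly enough, including possible vertical asymptotes and branches with two real halves $z\to\pm\infty$. I would handle this via Proposition~\ref{convergent}: $r_j(z)-(m_jz+c_j)$ consists only of negative-exponent terms, so it tends to $0$. For vertical branches, I would use the chart $Y\neq 0$ in place of $X\neq 0$. The measure-zero claim could alternatively be replaced by a direct argument: a closed curve of rational slope together with a compact one-dimensional set cannot cover an open set.
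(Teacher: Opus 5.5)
Your argument is correct, but it reaches the conclusion by a different route than the paper.

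\textbf{The paper's route.} The paper never identifies the closure of $\pi(\cC)$. For each $\varepsilon>0$ it covers $\cC$ by a box $[-R,R]^2$ together with the closed $\varepsilon$-strips around the asymptotes. It notes that the projections of these strips are compact because the slopes are rational. It then exhibits a point $P$ of the torus outside $\pi(\cC)\cup\pi(\ell_1)\cup\dots\cup\pi(\ell_k)$ by a countability argument: the $\Z^2$-translates of an auxiliary line meet $\cC$ and the $\ell_i$ in countably many points in total, while the image of that line is uncountable. Choosing $\varepsilon$ below half the distance from $P$ to the closed curves $\pi(\ell_i)$ produces a closed set $D_\varepsilon\supseteq\pi(\cC)$ that misses $P$.

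\textbf{Your route.} You instead prove directly that $\overline{\pi(\cC)}\subseteq\pi(\cC)\cup\bigcup_j\pi(\ell_j)$. This uses the same two inputs as the paper: convergence of each real branch to its asymptote, and closedness of $\pi(\ell_j)$ for rational slope. You then replace the countability step by a Lebesgue-measure argument.

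\textbf{Comparison.} Your version buys an explicit description of the closure; combined with Proposition~\ref{asymnotcontained} it becomes an equality. The cost is a little measure theory, whereas the paper's version stays purely topological and cardinality-based, at the price of an auxiliary line and an $\varepsilon$-dependent radius $R$.

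\textbf{Two small points of precision.} First, your claim that every accumulation point of $\pi(B_j)$ lies in $\gamma_j$ is literally false, since points of $\pi(B_j)$ are themselves accumulation points. What you use, and what is true, concerns limits of $\pi(p_n)$ with $p_n\in B_j$ and $|p_n|\to\infty$. Bounded sequences converge to points of the closed set $\cC$, possibly outside $B_j$, so the containment is best stated for $\cC$ rather than for each $B_j$. Second, the measure-zero claims need no rectifiability. The zero set of a nonzero polynomial is Lebesgue-null in $\R^2$, and $\pi$ is an isometry on each unit square, so $\pi(\cC)$ is null at once.
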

\begin{proof}
    Let $\ell_1,\ldots, \ell_k$ be the asymptotes. For each $\varepsilon>0$ we define the sets 
    $$B_{\varepsilon,k}=\left\{(x,y)\in\R^2\mid d((x,y),\ell_k)\leq \varepsilon\right\}.$$
    From Proposition~\ref{convergent}, we can choose $R>0$ such that 
    $$C\subset [-R,R]^2\cup B_{\varepsilon,1}\cup\cdots\cup B_{\varepsilon,k}.$$
    The set $C\cap [-R, R]^2$ is compact and then has a compact image on the real torus, and the sets $\pi(B_{\varepsilon,k})$ are compact subsets of the torus. Therefore, the set
    $$D_\varepsilon:=\bigcup_{i=1}^k \pi(B_{\varepsilon,i})\cup \pi(\cC\cap [-R,R]^2)\subseteq \R^2/\Z^2.$$
    is compact and hence closed. If the set $D$ is strictly contained in the real torus, then $\pi(\cC)$ cannot be dense. 
      
    Now we prove that there exists $\varepsilon>0$ such that $D_\varepsilon\neq \R^2/\Z^2$. Let $\ell$ be an affine line such that all the lines in the orbit of $\ell$ under the action of $\Z^2$ are distinct from $\ell_1,\ldots,\ell_k$. Then, each line in the orbit of $\ell$ under the action of $\Z^2$ intersects $\ell_1,\ldots,\ell_k,$ and $\cC$ in finitely many points. Since there are countably many lines in the orbit of $\ell$, there are at most countably many intersections between $\pi(\ell)$ and 
    $$\pi(\ell_1)\cup\ldots \pi(\ell_k)\cup \pi(\cC).$$
    Then, there exists 
    $$P\in \R^2/\Z^2\setminus [\pi(\ell_1)\cup\ldots \cup\pi(\ell_k)\cup \pi(\cC)].$$
    For each $i=1,\ldots,k$, $P$ has positive distance $d_k$ from $\ell_k$, then it is enough to choose 
    $$\varepsilon<\frac{1}{2}\min_{k}\{d_k\}.$$\qedhere
\end{proof}

Under the hypothesis of Proposition~\ref{ratasym}, the projection $\pi(\cC)$ can either be closed or not. The next proposition shows that if $\ell$ an asymptote of $\cC$, then $\pi(\ell)\subseteq \overline{\pi(\cC)}$. Therefore, if $\ell$ is not a component of $\cC$, for instance, if $\cC$ is irreducible and not equal to $\ell$, then $\pi(\cC)$ is not closed.

\begin{pro}\label{asymnotcontained}
    Let $\cC\subset\R^2$ be a real algebraic curve, and $\ell$ be an asymptote of $\cC$. Then, $\pi(\ell)$ is contained in $\overline{\pi(\cC)}$. In particular, if $\ell$ is not a component of $\cC$, then $\pi(\cC)$ is not closed.
\end{pro}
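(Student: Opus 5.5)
We need to prove Proposition~\ref{asymnotcontained}: if $\ell$ is an asymptote of $\cC$, then $\pi(\ell)\subseteq\overline{\pi(\cC)}$, and if $\ell$ is not a component of $\cC$, then $\pi(\cC)$ is not closed.

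We split according to the slope of $\ell$. If $\ell$ has irrational slope, Proposition~\ref{irrasym} gives that $\pi(\cC)$ is dense, so $\overline{\pi(\cC)}$ is the whole torus. Hence the first claim holds trivially. For the second claim, $\pi(\cC)$ is then not closed: it is dense, but it is not the whole torus. Indeed, $\pi(\cC)$ is a countable union of images of translates of a curve, each of measure zero. So the interesting case is $\ell$ with rational slope. After possibly swapping coordinates, we may assume $\ell=\{y=mx+q\}$ with $m\in\Q$. (A vertical asymptote is handled symmetrically.)

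For the containment, we use the infinity branch of $\cC$ asymptotic to $\ell$. By Proposition~\ref{convergent}, there is a real leaf $(x,r(x))$, defined for $x$ real and large in the appropriate direction, with $r(x)-(mx+q)\to 0$. Write $m=p/s$ with $s>0$. Then $\pi(\ell)$ is a closed curve, and it equals $\pi(\{(x,mx+q): x\in[x_0,x_0+s]\})$ for any $x_0$, since translating $x$ by $s$ translates the point by the integer vector $(s,p)$. Now fix $Q=\pi(x^*,mx^*+q)\in\pi(\ell)$. Consider the points $x_k=x^*+ks$ for $k\to\infty$. Modulo $\Z^2$ we have $\pi(x_k,mx_k+q)=Q$. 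Moreover, $(x_k,r(x_k))$ differs from $(x_k,mx_k+q)$ by $r(x_k)-mx_k-q\to0$. Therefore $\pi(x_k,r(x_k))\to Q$, so $Q\in\overline{\pi(\cC)}$.

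For non-closedness, it suffices to find a point of $\pi(\ell)$ not in $\pi(\cC)$. Assume $\ell$ is not a component of $\cC$. Then each translate $\tau_v(\ell)$, $v\in\Z^2$, meets $\cC$ in at most $\deg\cC$ points, by Bézout, since $\cC$ does not contain the line $\tau_v(\ell)$. (If some translate $\tau_v(\ell)$ were a component, then $\tau_v(\ell)$ would be a component of $\cC$. A translate of a rational-slope line by an integer vector is either $\ell$ itself or a parallel line, so we should assume $\cC$ contains no line of $\pi^{-1}(\pi(\ell))$. For irreducible $\cC\neq\ell$ this is automatic: a parallel line component would make $\cC$ equal to that line, which has no asymptote other than itself.) Thus $\pi^{-1}(\pi(\ell))\cap\cC$ is countable, being a countable union of finite sets. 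On the other hand, $\pi(\ell)$ is uncountable, and a point $Q\in\pi(\ell)$ lies in $\pi(\cC)$ only if some lift of it lies on $\cC\cap\pi^{-1}(\pi(\ell))$. Hence some $Q\in\pi(\ell)\setminus\pi(\cC)$ exists, and $Q\in\overline{\pi(\cC)}\setminus\pi(\cC)$.

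The main delicate step is making the asymptote–leaf relation precise. We need a \emph{real} leaf whose difference from the line tends to $0$ along real $x\to\pm\infty$. This follows from the real-coefficient remark for real points at infinity combined with Proposition~\ref{convergent}, since the line's series consists exactly of the non-negative-exponent terms. We must also handle the case where the real branch is traversed only as $x\to-\infty$, which is the same argument with $k\to-\infty$.
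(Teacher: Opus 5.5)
Your proof is correct and is essentially the paper's argument. Points of $\pi(\ell)$ are approximated by far-out points of the branch of $\cC$ asymptotic to $\ell$; you make the rational-slope periodicity explicit and handle irrational slopes via Proposition~\ref{irrasym} plus a measure-zero argument. Non-closedness then follows because $\cC\cap\bigcup_{v\in\Z^2}\tau_v(\ell)$ is countable while $\pi(\ell)$ is not.

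Your caveat is a genuine improvement on the paper's wording. The countability step needs that no integer translate of $\ell$ is a component of $\cC$, not just $\ell$ itself. This is automatic for irreducible $\cC\neq\ell$, but it fails in general. For $\cC\colon(x-1)(y-1)(xy-1)=0$ and $\ell\colon y=0$, the line $\ell$ is an asymptote and not a component, yet $\pi(\cC)$ coincides with the closed set of Remark~\ref{remhypandasym}.
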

\begin{proof}
     Let $(x,y)$ be a point of $\pi(\ell)$. Then, since $\ell$ is an asymptote of $\cC$, for each $\varepsilon>0$ there exist a point $P\in \cC$ and integer numbers $m,n$ such that $d(P,(x+n,y+m))<\varepsilon$. Then $\pi(\ell)\subset \overline{\pi(C)}$. Since $\ell$ is not a component of $\cC$, the intersections of $\pi(C)$ and $\pi(\ell)$ are countable and therefore $\pi(\ell)\not\subset\pi(C)$. It follows that $\pi(C)\neq \overline{\pi(C)}$, that is, $\pi(C)$ is not closed. 
\end{proof}

From Propositions~\ref{ratasym} and~\ref{asymnotcontained}, we get the following immediate corollary.

\begin{cor}
    Let $\cC\subset\R^2$ be an irreducible real algebraic curve such that all infinity branches have an asymptote $\ell$ with rational slope. Then, $\pi(\cC)$ is neither closed nor dense in the real torus.
\end{cor}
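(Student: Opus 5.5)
The corollary follows by combining the two preceding propositions, so I will simply check that their hypotheses are met.

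\emph{Not dense.} Since $\cC$ is irreducible and every infinity branch has an asymptote with rational slope, $\cC$ satisfies exactly the hypotheses of Proposition~\ref{ratasym}. That proposition gives directly that $\pi(\cC)$ is not dense in $\R^2/\Z^2$.

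\emph{Not closed.} I want to apply Proposition~\ref{asymnotcontained}, which requires an asymptote $\ell$ of $\cC$ that is not a component of $\cC$.
\begin{itemize}
\item First I need at least one infinity branch. The case of interest is $\cC$ non-compact. If $\cC$ were compact, it would have no infinity branches, the hypothesis would hold vacuously, and $\pi(\cC)$ would be compact, hence closed. So the statement implicitly assumes $\cC$ is non-compact, and I will make this assumption explicit. A non-compact real algebraic curve has a point at infinity, hence at least one real infinity branch. By hypothesis this branch has an asymptote $\ell$ with rational slope.
\item Next I need $\ell$ not to be a component of $\cC$. Since $\cC$ is irreducible, the only way $\ell$ could be a component is $\cC=\ell$. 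This case must also be excluded: a line with rational slope has closed image by Proposition~\ref{denselines}. So I assume, as the paper's remark preceding Proposition~\ref{asymnotcontained} does, that $\cC$ is not itself a line.
\end{itemize}
With these two assumptions, Proposition~\ref{asymnotcontained} shows $\pi(\ell)\subseteq\overline{\pi(\cC)}$ while $\pi(\ell)\not\subseteq\pi(\cC)$. Hence $\pi(\cC)$ is not closed.

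The only real obstacle is these implicit non-degeneracy assumptions ($\cC$ non-compact and not a line). I would state them explicitly before invoking the two propositions. Everything else is a direct citation.
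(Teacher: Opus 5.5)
Your proposal is correct and follows the paper's argument, which presents the corollary as an immediate consequence of Propositions~\ref{ratasym} and~\ref{asymnotcontained}. You are also right to make explicit that $\cC$ must be non-compact and not itself a line, since these are exactly the closed-image cases of Theorem~\ref{compactproj} that the statement leaves implicit; for the compact case, though, say it has no \emph{real} infinity branches, because under the paper's complex definition every curve has infinity branches.
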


\begin{oss}\label{remhypandasym}
    The hypothesis of irreducibility in Theorem~\ref{compactproj} cannot be removed. Let $\cC$ be the plane curve defined by the equation 
    $$xy(xy-1)=0,$$ 
    i.e., the union of a hyperbola and its asymptotes. The projection $\pi(\cC)$ is closed and then compact in the real torus. Let $\widetilde{P}\notin \pi(\cC)$ be a point on the real torus. We show that there exists an open neighborhood of $\widetilde{P}$ disjoint from $\pi(\cC)$. Let $P=(x_P,y_P)\in\pi^{-1}(\widetilde{P})$. There exists $R\in\R_{>0}$ such that for any $(x,y)\in \cC(\R)\setminus [-R,R]^2$, we have $|x|<\frac{|x_P|}{2}$ or $|y|<\frac{|y_P|}{2}$. 
    
    Let $\mathcal{A}$ be the intersection of the orbit of $P$ under the action of $\Z^2$ with $[-R, R]^2$. Since $\mathcal{A}$ is finite, there exists $\delta>0$ such that for any $Q\in \mathcal{A}$ the open ball $B(Q,\delta)$ with center $Q$ and radius $\delta$ is disjoint from $\cC$. Taking $\delta':=\min\{\delta,\frac{|x_P|}{2},\frac{|y_P|}{2}\}$ we obtain that the orbit of $B(P,\delta')$ under the action of $\Z^2$ is disjoint from $\cC$, then its projection on the real torus is an open neighborhood of $\widetilde{P}$ disjoint from $\pi(\cC)$. 
\end{oss}

We end the paragraph by proving the following result on curves with an infinity branch that does not have asymptotes. 

\begin{pro}\label{noasym}
    Let $C$ be an irreducible non-compact real plane curve with an infinity branch $B$ whose leaves are not asymptotic to lines. Then $\pi(\cC)$ is dense in the real torus. 
\end{pro}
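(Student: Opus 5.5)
The plan is to parametrize a real leaf of $B$ by a Puiseux expansion and reduce density to Weyl's equidistribution theorem for polynomial-type sequences. After possibly swapping the coordinates, we may assume the point at infinity of $B$ is $(1:m:0)$ with $m\in\R$. If the point at infinity were $(0:1:0)$, we would exchange $x$ and $y$; this does not affect density on the torus. Since $B$ is a real infinity branch, it has a real leaf, given for $x\to+\infty$ (or $-\infty$) by
$$y=r(x)=mx+\sum_{i\ge 0} a_i x^{1-n_i/n},$$
with real coefficients. By Proposition~\ref{convergent}, a leaf is asymptotic to a line exactly when its non-negative-exponent part has the form $mx+c$. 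The hypothesis therefore says that the polynomial-like part
$$p(x)=mx+\sum_{n_i<n} a_i x^{1-n_i/n}$$
contains some term $a_ix^{\alpha}$ with $0<\alpha<1$ and $a_i\neq 0$. The remainder $r(x)-p(x)$ tends to $0$.

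To show $\pi(\cC)$ is dense, fix a target $(x_0,y_0)$ and $\varepsilon>0$. We look for a large $x$ with $x\equiv x_0 \pmod 1$ such that $r(x)\equiv y_0$ up to $\varepsilon$ modulo $1$. We therefore take $x=k+x_0$ with $k\in\N$ and consider the real sequence
$$s_k=r(k+x_0)=p(k+x_0)+o(1).$$
It suffices to show that $(s_k)$ is dense (indeed equidistributed) modulo $1$. The key step is to replace Weyl's theorem on polynomial sequences by its extension to sequences of the form $\sum c_j k^{\beta_j}$ with real exponents: by Fejér's theorem or van der Corput's difference theorem, such a sequence is equidistributed modulo $1$ as soon as some non-integer exponent $\beta_j$ has a nonzero coefficient. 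In our expansion of $p(k+x_0)$ in powers of $k$, the term $mk$ has integer exponent. The leading fractional-exponent term $a_i k^{\alpha}$, with $0<\alpha<1$, persists after expanding $(k+x_0)^{\beta}$ binomially, because the expansion only adds lower powers. For instance, when $\alpha$ is the largest fractional exponent, the sequence $g(k)=p(k+x_0)-mk$ satisfies $\Delta g(k)\to 0$ monotonically and $k\,|\Delta g(k)|\to\infty$. Fejér's theorem then applies directly after we handle the linear term $mk$.

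I expect the main obstacle to be the linear term $mk$ with $m$ possibly irrational or rational. The cleanest route is van der Corput or Weyl's criterion applied to $h\,s_k$ for each nonzero integer $h$. Exponential sums $\sum_k e(h(mk+g(k)))$ with $g$ of Fejér type are $o(N)$ by Kusmin–Landau and partial summation, since $hm k$ only shifts the phase derivative by the constant $hm$ while $g'$ tends to $0$ slowly. If $hm\notin\Z$, the derivative stays away from the integers, and Kusmin–Landau gives a bounded sum. If $hm\in\Z$, we fall back to the pure Fejér case. Once equidistribution is established, the $o(1)$ error and the choice of the real leaf going to $+\infty$ finish the argument. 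If no real leaf tends to $+\infty$, we use $x=-k+x_0$ instead.
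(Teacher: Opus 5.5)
Your argument is correct, but it follows a different route from the paper. The paper passes to the uniformizing parameter $t=z^{1/n}$ and replaces the leaf by the convergent leaf $\{(t^n,p(t))\}$, where $p$ is an honest polynomial. It then substitutes $t=k+y$ with $k\in\N$ and a fixed offset $y$, and appeals to the multidimensional Weyl theorem for joint equidistribution of $((k+y)^n,p(k+y))$ in $\R^2/\Z^2$. The advantage of that route is that only the classical theorem for polynomial sequences with integer exponents is needed. You instead keep the coordinate $x$ as the parameter and shift it by integers. The first coordinate is then \emph{exactly} $x_0$ modulo $1$, and only one-dimensional density of $r(k+x_0)$ is required. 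The price is that $mk+g(k)$ involves fractional exponents, so you need Fej\'er's theorem plus Weyl's criterion with a Kusmin--Landau bound to absorb the linear term. Your split into $hm\in\Z$ and $hm\notin\Z$ does this correctly. Abel summation also suffices: $\sum_{k\le K}e^{2\pi i hmk}$ is bounded when $hm\notin\Z$, while $e^{2\pi i hg(k)}$ has total variation $O(N^{\alpha})$ for $k\le N$, giving $O(N^{\alpha})=o(N)$. What your route buys is robustness. The paper's joint equidistribution claim can fail for every offset: for $x=y^2$ one gets $p(t)=t$, so the second coordinate $k+y$ is constant modulo $1$. The paper's argument therefore has to be repaired by letting $y$ vary, whereas fixing $x\equiv x_0$ avoids this degeneracy altogether. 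Two minor points. First, the general principle you quote for $\sum c_jk^{\beta_j}$ needs a \emph{positive} non-integer exponent (e.g.\ $k+k^{-1/2}$ is not equidistributed), though your concrete argument only uses exponents in $(0,1)$. Second, like the paper, you tacitly take $B$ to be a real branch, which is the intended reading of the statement.
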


Our approach relies on Weyl’s theorem on polynomials~\cite{Weyl1916}. We first recall the necessary definitions.

\begin{defi}
    The real numbers $\alpha_1, \ldots, \alpha_s \in \mathbb{R}$ are said to be rationally independent if for each $s$-tuple of rational numbers $r_1, \ldots, r_s$ the condition
    $$r_1 \alpha_1 + \cdots + r_s \alpha_s = 0$$
    implies $r_1 = \cdots = r_s = 0$.
\end{defi}

\begin{defi}
    A sequence $x_n = (x_n^1, \ldots, x_n^k) \in \mathbb{R}^k$ is said to be \emph{uniformly distributed modulo 1} if, for each choice of $k$ intervals $[a_1, b_1], \ldots, [a_k, b_k] \subset [0, 1)$, we have
    $$ \frac{1}{n} \sum_{j=0}^{n-1} \prod_{i=1}^{k} \chi_{[a_i, b_i]}(\{x_j^i\}) \to \prod_{i=1}^{k} (b_i - a_i) \quad \text{as} \quad n \to \infty, $$
    where $\chi_{[a,b]}\colon \R\rightarrow \R$ is the characteristic function of the interval $[a,b]$ for all $a,b\in\R$.
\end{defi}

\begin{oss}
    It follows from the definition that if a sequence is uniformly distributed modulo 1 in $\R^k$ then it is dense in $\R^k/\Z^k$.
\end{oss}

\begin{thm}[Weyl]\label{weylpolgen}
    Let $p_i(n) = \alpha_d^{(i)} n^d + \alpha_{d-1}^{(i)} n^{d-1} + \cdots + \alpha_1^{(i)} n + \alpha_0^{(i)}$ be a polynomial with real coefficients. If there exists $1 \leq j \leq d$ such that $\alpha_j$, $\beta_j$ and 1 are rationally independent, then the set $S:=\{(p_1(n), \ldots,p_k(n))\mid n\in\N\} \subset \mathbb{R}^k$ is uniformly distributed modulo 1.
\end{thm}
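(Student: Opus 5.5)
The plan is to prove this classical statement with Weyl's equidistribution criterion, reducing to one dimension and then inducting on the degree via van der Corput's difference theorem. We read the hypothesis as: there is an index $1\le j\le d$ such that $1,\alpha_j^{(1)},\ldots,\alpha_j^{(k)}$ are rationally independent. We write $e(x)=e^{2\pi i x}$.

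\textbf{Step 1: reduction to one dimension.} By Weyl's criterion in $\R^k$, the sequence $(p_1(n),\ldots,p_k(n))$ is uniformly distributed modulo $1$ if and only if, for every nonzero $h=(h_1,\ldots,h_k)\in\Z^k$,
$$\frac1N\sum_{n<N} e\Bigl(\sum_i h_ip_i(n)\Bigr)\to 0 .$$
We then approximate characteristic functions of boxes by trigonometric polynomials, using Stone--Weierstrass together with a sandwiching argument. Fix such an $h$ and set $q(n)=\sum_i h_i p_i(n)$. Its $j$-th coefficient is $\sum_i h_i\alpha_j^{(i)}$, which is irrational by the rational independence hypothesis. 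The one-dimensional Weyl criterion then reduces everything to the following claim: \emph{a real polynomial $q$ with some irrational non-constant coefficient is uniformly distributed modulo $1$.}

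\textbf{Step 2: reduction to an irrational leading coefficient.} Let $j'$ be the largest index such that the $j'$-th coefficient of $q$ is irrational. All coefficients of degree greater than $j'$ are rational; let $Q$ be a common denominator. If $a=b/Q$ and $n=Qm+s$, then $a(Qm+s)^i\equiv a s^i \pmod 1$, because $(Qm+s)^i-s^i$ is divisible by $Q$. So along each residue class $n\equiv s \pmod Q$, the sequence $q(n)$ agrees modulo $1$ with a polynomial in $m$ of degree $j'$ whose leading coefficient is $Q^{j'}\alpha$, which is irrational. If each of these $Q$ subsequences is uniformly distributed, then so is the whole sequence, since the exponential sums are averages over the classes. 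This reduces the claim to polynomials with irrational leading coefficient.

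\textbf{Step 3: induction on the degree.} In degree $1$, the sum $\sum_{n<N} e(h\alpha n)$ is a geometric sum bounded by $2/|1-e(h\alpha)|$, so it is $o(N)$ because $h\alpha\notin\Z$. For degree $D\ge2$ we use van der Corput's difference theorem: if for every $m\ge1$ the sequence $x_{n+m}-x_n$ is uniformly distributed modulo $1$, then so is $x_n$. The difference $q(n+m)-q(n)$ is a polynomial of degree $D-1$ with leading coefficient $mD\alpha$, which is irrational. By induction it is uniformly distributed, and this closes the induction.

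The main obstacle is establishing van der Corput's theorem itself. The route we would take is the van der Corput inequality: for $1\le H\le N$,
$$\Bigl|\sum_{n<N} u_n\Bigr|^2\le \frac{N+H}{H}\sum_{|m|<H}\Bigl(1-\tfrac{|m|}{H}\Bigr)\sum_{n} u_{n+m}\overline{u_n}.$$
This follows from Cauchy--Schwarz applied to $\sum_n\sum_{r<H}u_{n+r}$. Apply it to $u_n=e(hx_n)$ and divide by $N^2$. The $m=0$ term contributes $O(1/H)$, and the terms with $m\neq0$ contribute $o(1)$ as $N\to\infty$ by the hypothesis on the differences. Letting $H\to\infty$ afterwards gives the result.
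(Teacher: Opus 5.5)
Your proof is correct. The paper does not prove this statement: it quotes it from Weyl's 1916 paper and uses it only with $k=2$ in Proposition~\ref{noasym}. So the comparison is with the classical argument rather than with anything in the text.

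Your route is the standard textbook proof. You use the multidimensional Weyl criterion to reduce to the single polynomial $q=\sum_i h_ip_i$. You split $n$ into residue classes modulo a common denominator $Q$ so that the top irrational coefficient becomes the leading one. You then induct on the degree via the van der Corput difference theorem. This is essentially Weyl's own differencing argument, in the form later codified by van der Corput. The individual steps all check out:
\begin{itemize}
\item the congruence $a(Qm+s)^i\equiv as^i \pmod 1$;
\item the leading coefficient $mD\alpha$ of the difference polynomial;
\item the van der Corput inequality with $N+H$ in place of $N+H-1$, which is harmless because the weighted sum on the right equals $\frac{1}{H^2}\sum_n\bigl|\sum_{r<H}u_{n+r}\bigr|^2\ge 0$;
\item the limit $\limsup_N\bigl|\frac1N\sum u_n\bigr|^2\le 1/H$.
\end{itemize}

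Two further points are worth noting.

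First, your reading of the hypothesis, that $1,\alpha_j^{(1)},\dots,\alpha_j^{(k)}$ are rationally independent, is the right repair of the statement. As written, ``$\alpha_j$, $\beta_j$ and $1$'' only makes sense for $k=2$.

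Second, Step~1 shows that your argument only uses the following: every nonzero integer combination $\sum_i h_ip_i$ has \emph{some} irrational non-constant coefficient, where the index may depend on $h$. That weaker condition is in fact necessary and sufficient. If it fails for some $h$, then with $Q$ a common denominator of the non-constant coefficients, the frequency $Qh$ gives a constant exponential sum of modulus $1$. So your proof actually establishes Weyl's sharp criterion. The fixed-$j$ hypothesis in the paper is only a convenient sufficient condition.
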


\begin{proof}[Proof of Proposition~\ref{noasym}]
    Let $L$ be a leaf of $B$ parametrized by $(z,r(z))$ where $r\in\mathrm{Puis}_\C(z)$ have index of ramification $n$ and $|z|>M, M\in\R^+$. By Lemma~\ref{convergent} there exist a polynomial $p(x)$ such that the leaves $L$ and 
    $$\overline{L}:=\left\{(z,p(z^{1/n})) \bigm| |z|>\overline{M}\right\}$$
    converges. For each $\varepsilon>0$ there exist $z>0$ such that $d((z,r(z),(z,p(z^{1/n})))<\varepsilon$, then it is enough to show that $\overline{L}$ is dense in the real torus. We set $t:=z^{1/n}$ and we prove that 
    $$\{(t^n,p(t))\bigm| |t|>\overline{M}^{1/n}\}$$
    is dense in the real torus. We fix $y\in [0,1)$ and we consider the polynomials $(m+y)^n,p(m+y)$, with $m\in\N$. By assumption, $p$ has a term of degree $0<d\neq n$, otherwise $\overline{L}$ would be a line, so we can choose $y$ such that $(m+y)^n,p(m+y)$ have a pair of corresponding coefficients which are not rationally dependent with 1. Then, we can apply Theorem~\ref{weylpolgen}, which states that the set 
    $$\{((m+y)^n,p(m+y))\mid n\in\N\}$$ is uniformly distributed modulo 1, the the leaf $\overline{L}$ is dense in the real torus. 
\end{proof}

\subsection{Intersections of pairs of curves on the real torus}\label{secpairs}

We end this section discussing what happens if we intersect the projections of two algebraic curves on the real torus.

\begin{thm}\label{compactintersections}
    If the projections on the real torus of two real irreducible algebraic plane curves are both compact, then they have only finitely many intersections.   
\end{thm}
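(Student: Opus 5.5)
By Theorem~\ref{compactproj}, each of the two irreducible curves $\cC_1,\cC_2$ is either compact (i.e.\ bounded in $\R^2$) or a line with rational slope. I will split into three cases according to this dichotomy. Throughout, a point of $\pi(\cC_1)\cap\pi(\cC_2)$ is the image of a point of $\tau_v(\cC_1)\cap\cC_2$ for some $v\in\Z^2$, where $\tau_v(\cC_1)$ is the translate of $\cC_1$ by $v$. So it suffices to show two things: only finitely many $v$ give a nonempty intersection $\tau_v(\cC_1)\cap\cC_2$ up to the symmetries of the situation, and each such intersection is finite. I must exclude the degenerate situation where $\cC_2=\tau_v(\cC_1)$ for some $v$, since then the two images coincide. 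I will add this explicitly as a hypothesis, as announced in the discussion after Theorem~\ref{compactproj}; it is needed only in the compact--compact case and the parallel-lines case. Whenever $\tau_v(\cC_1)\neq\cC_2$, both curves are irreducible and distinct, so B\'ezout's theorem gives at most $d_1d_2$ intersection points.

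\emph{Both curves compact.} Both curves lie in some box $[-R,R]^2$. Hence $\tau_v(\cC_1)\cap\cC_2\neq\emptyset$ forces $|v|\le 2R$, which leaves finitely many $v$. Each of these contributes at most $d_1d_2$ points, so the total is finite.

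\emph{One compact curve $\cC_2$, one rational line $\cC_1$.} The translates of $\cC_1$ form a family of parallel lines whose pairwise distances are bounded below by a positive constant, because the slope is rational. Only finitely many of these lines meet the bounded set $\cC_2$. The compact curve $\cC_2$ is not a line, so each such line meets $\cC_2$ in at most $d_2$ points.

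\emph{Two rational lines.} If they are parallel, then either some translate of one equals the other, which is the excluded case, or no translate meets the other, so the intersection is empty. If they are not parallel, $\pi(\cC_1)$ and $\pi(\cC_2)$ are closed geodesics on the torus with primitive directions $(p_1,q_1)$ and $(p_2,q_2)$. They meet in exactly $|p_1q_2-p_2q_1|$ points. To see this, pass to a fundamental domain: the intersection points on $\cC_2$ modulo the period of $\cC_2$ are finite, since $\cC_2$ maps onto its image with a period vector and only finitely many translates of $\cC_1$ cross one period segment.

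The main obstacle is not the finiteness itself but stating the result correctly. The theorem as written fails when the two curves lie in the same $\Z^2$-orbit, so the proof must begin by assuming $\pi(\cC_1)\neq\pi(\cC_2)$. Equivalently, $\cC_2$ is not a $\Z^2$-translate of $\cC_1$, and irreducibility makes this reformulation valid. With that assumption in place, the boundedness arguments above are routine.
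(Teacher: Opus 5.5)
Your proof is correct and follows essentially the paper's argument: the same three-case split via Theorem~\ref{compactproj}, a bounding box plus B\'ezout for two compact curves, and finitely many parallel translates meeting a bounded set in the line cases. The paper instead normalizes the line to $y=0$ by an $\mathrm{SL}_2(\Z)$ change of coordinates and, for two lines, computes the intersections explicitly as $x\in\{0,\frac{1}{a},\dots,\frac{a-1}{a}\}$ rather than using a period segment; your explicit exclusion of curves in the same $\Z^2$-orbit is right and is only implicit in the paper (stated after Theorem~\ref{compactproj} and invoked as the case $a=0$).
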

\begin{proof}
    From Theorem~\ref{compactproj}, we have to deal with three different cases, namely two compact curves, one compact curve and a line with rational slope, and two lines with rational slope.
    \begin{compactenum}[1.]
            \item \emph{$\cC_1,\cC_2$ are two irreducible compact curves.} Since $\cC_1,\cC_2$ are compact subsets of $\R^2$, there exist        
            $a_1,b_1,c_1,d_1,a_2,$ $b_2,c_2,d_2\in\R$ such that 
            $$\cC_i\subset [a_i,b_i]\times[c_i,d_i]\quad\text{for }i=1,2.$$
            Let $P=(x,y)\in \cC_1$ such that there exists an integer vector $(v_1,v_2)\in\Z^2$ satisfying $P+(v_1,v_2)\in \cC_2$. Then, 
            $$a_2\leq x+v_1\leq b_2\quad \text{and}\quad c_2\leq y+v_2\leq d_2,$$
            which gives
            $$a_2-b_1\leq v_1\leq b_2-a_2\quad \text{and}\quad c_2-d_1\leq v_2\leq d_2-c_1.$$
            Therefore, there are only finitely many choices for $(v_1,v_2)$. From Bezout's Theorem, if two algebraic curves have finitely many intersections, then there are only finitely many intersections on the torus.
        \item \emph{$\cC$ is an irreducible compact curve and $\ell$ is a line with rational      slope.} We can suppose that $(0,0)\in\ell$, that is, there exist coprime             integers $m,n$ such that $\ell:=\frac{m}{n}x$. From Bezout's identity, there exist $a, b\in\Z$ such that $am-bn=1$. We consider the linear change of             coordinates 
            $$T\colon \R^2\rightarrow \R^2\colon \begin{pmatrix}n\\m\end{pmatrix}\mapsto\begin{pmatrix}1\\0\end{pmatrix},\begin{pmatrix}a\\b\end{pmatrix}\mapsto\begin{pmatrix}0\\1\end{pmatrix}.$$
            We observe that $T\in\mathrm{SL}_2(\Z)$, then it induces a lattice automorphism of $\Z^2$ and so it is compatible with the action of $\Z^2$ on the torus. The line $T(\ell)$ is given by the equation 
            $$0=(m\quad -n)T^{-1}\begin{pmatrix}x\\y\end{pmatrix}=(m\quad -n)\begin{pmatrix}n & a\\m & b\end{pmatrix}\begin{pmatrix}x\\y\end{pmatrix}=(0\quad 1)\begin{pmatrix}x\\y\end{pmatrix}=y.$$
            Then the images of $T(\ell)$ under the action of $\Z^2$ are the lines of the form $\ell_k:=y=k$ with $k\in\Z$. Moreover, $T(\cC)$ is compact since $T$ is continuous, and then it is bounded. Therefore, there exist only finitely many $k$ such that $\ell_k$ and $T(\cC)$ have a non-empty intersection, then from Bezout's Theorem, there are only finitely many intersections on the torus.
            \item \emph{$\ell_1,\ell_2$ are two lines with rational slope.} 
            We can suppose that $(0,0)\in\ell_1\cap \ell_2$ and, as in the previous case, we can suppose that $\ell_1$ is described by the equation $y=0$ after a linear change of coordinates. Let $\ell_2: ax+by=0$ with $a,b\in\Z$. If $a=0$ then $\ell_1,\ell_2$ belongs to the same $\Z^2$-orbit. Otherwise, the intersections of the $\Z^2$-orbits of $\ell_1,\ell_2$ are described by the system of equations
            $$\begin{cases}
                y=0\\
                a(x+n)+b(y+m)=0.
            \end{cases}$$
            We get $x=-\frac{an+bm}{a}$, then $x$ can only assume values in the finite set $\{0,\frac{1}{a},\ldots\frac{a-1}{a}\}$. Therefore, $\ell_1,\ell_2$ have only finitely many intersections on the real torus.\qedhere
    \end{compactenum}
\end{proof}

\begin{oss}
    Theorem~\ref{compactintersections} does not follow only from the topological property of being compact. Indeed, if the curves have both compact projections but one is reducible, there could be infinitely many intersections. For instance, we can take the curve $\cC$ from Remark~\ref {remhypandasym} and a line with rational slope. The same conclusion holds if one is not algebraic. For instance, one can consider the closure $\overline{\cD}$ of the curve $$\cD=\left\{((1+e^{-t})\cos(t),(1+e^{-t})\sin(t))\mid t\in[0,+\infty)\right\}\subset \R^2. $$   
    The projection of $\overline\cD$ on the real torus is compact and intersects the projection of the line $x=0$ infinitely many times.
\end{oss}

\begin{oss}\label{linepower}
The converse of Theorem~\ref{compactintersections} does not hold. Define $\cC:y=x^k$, which has a dense image on the real torus, and $\ell:x=0$. Then the intersections of $\ell$ and $\cC$ on the real torus are projections of points given by the equations
$$\begin{cases}
    x=n,\\
    y=x^k,
\end{cases}$$
where $n\in\Z$. The solutions are $(n,n^k)$ with $n\in\Z$. Since these points belong to the same orbit for the action of $\Z^2$, there is only one intersection on the real torus.
\end{oss}

In view of the previous remark, we can ask the following question.

\begin{que}\label{Pairs}
    Which pairs of real irreducible algebraic curves have only finitely many intersections on the real torus?   
\end{que}

Given a real plane curve $\cC$ and an integer vector $v\in \Z^2$, we denote by $T_v(\cC)\subset \R^2$ the curve obtained by translating $\cC$ by the vector $v$. As we have seen in the proof of Theorem~\ref{compactintersections}, if $\cC_1, \cC_2\subset \A^2(\R)$ are two compact algebraic curves, then they have only finitely many intersections on the real torus since there are only finitely many integral vectors $v$ such that $T_v(\cC_1)$ intersect $\cC_2$. This can occur even if the curves are not compact, as shown in the following proposition. 

\begin{pro}
    Let $\ell: ax+by=0$ with $(a,b)\in\Z^2\setminus\{(0,0)\}$. Let $\cC$ be a real plane curve such that there exist two constants $c_1,c_2\in \R$ satisfying, for each $P=(x_P,y_P)\in \cC$,
    $$c_1\leq ax_P+by_P\leq c_2.$$
    Then, the curve $\cC$ has only finitely many intersections with the family of curves $T_v(\ell)$ as $v$ varies through all the integer vectors. Hence, $\ell$ and $\cC$ have only finitely many intersections on the real torus.
\end{pro}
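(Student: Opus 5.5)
The plan is to show directly that only finitely many translates $T_v(\ell)$ meet $\cC$, and then obtain the statement about the real torus from Bezout's Theorem, as in the proof of Theorem~\ref{compactintersections}.

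For $v=(v_1,v_2)\in\Z^2$, the translate $T_v(\ell)$ is the line $a(x-v_1)+b(y-v_2)=0$, that is,
$$ax+by=k_v,\qquad k_v:=av_1+bv_2\in\Z.$$
So the family $\{T_v(\ell)\}_{v\in\Z^2}$ contains only the lines $\ell_k\colon ax+by=k$ with $k$ in the subgroup $a\Z+b\Z=\gcd(a,b)\Z$. In particular, it consists of distinct parallel lines indexed by integers.

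Suppose $P=(x_P,y_P)\in\cC\cap\ell_k$. Then $k=ax_P+by_P\in[c_1,c_2]$. Only finitely many integers $k$ lie in $[c_1,c_2]$, so only finitely many lines $\ell_k$ meet $\cC$.

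For each of these lines, $\ell_k\cap\cC$ is finite by Bezout's Theorem, provided $\ell_k$ is not a component of $\cC$. This proviso is the only delicate point. I would handle it by assuming, as throughout the paper, that $\cC$ is an irreducible curve other than a line in the orbit of $\ell$; if $\cC$ were itself some $\ell_k$, the two curves would coincide on the torus and the statement would be about the same orbit.

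Under this assumption, the set $\bigcup_{v}\big(T_v(\ell)\cap\cC\big)$ is a finite union of finite sets, hence finite. Every intersection point of $\pi(\ell)$ and $\pi(\cC)$ on the torus is the image under $\pi$ of a point $P\in\cC$ with $P\in T_v(\ell)$ for some $v\in\Z^2$. Therefore the intersections on the real torus are finite in number.
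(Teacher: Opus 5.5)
Your argument is correct and is essentially the paper's. The paper first applies an $\mathrm{SL}_2(\Z)$ change of coordinates so that $\ell$ becomes $y=0$ and its translates become $y=m$; you instead read off directly that the translates are the level sets $ax+by=k$ with $k\in\gcd(a,b)\Z$. In both versions the point is that only finitely many integer levels lie in the bounded interval $[c_1,c_2]$. Your explicit Bezout step, assuming $\cC$ is algebraic and contains no line of the family as a component, fills in a point the paper leaves implicit. That proviso is genuinely needed: the conclusion fails, for instance, for $\cC=T_v(\ell)$ itself.
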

\begin{proof}
    As in Theorem~\ref{compactintersections}, after a change of coordinates that induces an isomorphism on $\Z^2$, we can suppose that $\ell$ is the line $y=0$ and the points of $C$ satisfy the condition $d_1\leq y\leq d_2$. We have $$\left\{T_v(\ell)\mid v\in\Z^2\right\}=\left\{\ell_m:y=m\mid m\in\Z\right\},$$
    then the statement follows since there are only finitely many integers $m$ satisfying the condition $d_1\leq m\leq d_2$. 
\end{proof}

However, in Remark~\ref{linepower} the reason behind finiteness is different. Indeed, there are infinitely many integral vectors $v$ such that $\tau_v(\ell)$ intersects $\cC$, but all the intersections lie on the same orbit for the action of $\Z^2$ over $\R^2$. In Proposition~\ref{pairhyp}, we show that if one of the two curves is hyperbolic, this behavior cannot occur. 

\begin{defi}\label{defhyp}
    Let $\cC$ be an algebraic curve. Let $g$ be the genus, and $d$ be the number of points at infinity of a smooth completion of $\cC$. We say that $\cC$ is \emph{hyperbolic} if $2g-2+d$ is positive. 
\end{defi}

\begin{thm}[Siegel's Theorem,{\cite[Theorems 2.4,5.1]{Lang83}}]\label{SiegelThm}
    Let $K$ be a finitely generated field over $\Q$ and $R$ be a subring of $K$ finitely generated over $\Z$. Let $\cC$ be a hyperbolic algebraic curve defined over $K$. Then, there are only finitely many $R$-points on $\cC$. 
\end{thm}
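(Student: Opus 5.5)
Since this is a classical result quoted from~\cite{Lang83}, the plan is to follow the standard Siegel--Mahler--Lang strategy rather than invent a new argument. The argument splits by genus: $g=0$ with at least three points at infinity, and $g\geq 1$ with at least one point at infinity. The case $g\geq 2$ with no points at infinity is excluded from consideration, since there $\cC$ is projective and the claim is Faltings' theorem.

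First, I would normalise the data. Enlarging $R$ by finitely many elements (inverting denominators of equations and leading coefficients) does not change finiteness, so I can take $\cC$ to be a smooth affine curve with a model over $R$. I would also choose a finite set of absolute values (``places at infinity'') on $K$ with respect to which $R$-points are exactly the points whose coordinates have bounded $v$-adic size at all other places. For finitely generated $K$ of positive transcendence degree, Lang's framework of heights relative to a model of $K$ replaces the usual number-field places.

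\textbf{Genus zero.} Here $\cC\cong \pr^1\setminus\{\alpha_1,\dots,\alpha_d\}$ with $d\geq 3$. After a finite extension of $K$ and of $R$, I may assume the $\alpha_i\in K$ and move three of them to $0,1,\infty$. An $R$-point $t$ then has $t$ and $1-t$ both in $R^*$, possibly after enlarging $R$. Hence every integral point produces a solution of the unit equation $u+v=1$ with $u,v\in R^*$. The group $R^*$ is finitely generated by the generalised Dirichlet unit theorem (Roquette--Lang for finitely generated rings). The finiteness of solutions of $u+v=1$ in a finitely generated group then follows from the Thue--Siegel--Roth approximation theorem, in its $S$-adic or Lang form, applied after writing $u=\lambda x^n$ with $\lambda$ in a finite set of coset representatives of $R^*/R^{*n}$.

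\textbf{Positive genus.} I would embed $\cC$ into its Jacobian $J$ via a base point and use the weak Mordell--Weil theorem: $J(K)/mJ(K)$ is finite (Lang--N\'eron over finitely generated fields). Pulling back multiplication by $m$ gives an unramified cover $\cC'\to\cC$. By Chevalley--Weil, all integral points of $\cC$ lift, over a fixed finite extension $K'$, to integral points on finitely many twists of $\cC'$. Each twist has many more points at infinity (at least $m^{2g}$ times as many) and a larger genus. If there were infinitely many integral points, their heights would tend to infinity while they remain $v$-adically close to some point at infinity $\beta$ at one place $v$. Comparing heights on $\cC'$ with heights on $\cC$, via quasi-parabolicity of the canonical height under $[m]$, shows that the approximation exponent of the relevant coordinate to $\beta$ becomes arbitrarily large as $m$ grows. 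This contradicts Roth's theorem, which bounds approximation exponents of algebraic $\beta$ by $2+\varepsilon$.

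The main obstacle is this last step: the precise bookkeeping of heights through the isogeny $[m]$ and the local distance functions, that is, checking that the approximation exponent genuinely beats $2$ for $m$ large. I would handle it exactly as in~\cite[Chapter~8]{Lang83}, using N\'eron--Tate heights and the Roth theorem over finitely generated fields.
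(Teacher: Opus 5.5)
Your outline is the classical Siegel--Mahler--Lang argument: for genus $0$, the unit equation in the finitely generated group $R^*$ together with Roth; for positive genus, weak Mordell--Weil, the \'etale pullback of $[m]$ on the Jacobian, and Roth. This is essentially the proof in Lang's book that the paper invokes purely by citation, so your approach coincides with the paper's. Two small fixes for when you write it out: for $g=1$ the cover $\cC'$ still has genus $1$, so its genus does not grow. Also, for $g\geq 2$ the factor $m^{2g}$ gained in the height comparison survives only if you measure the approximation on $\cC'$ with a function whose vanishing order at the lifted point $\beta'$ is large compared with its degree, obtained by Riemann--Roch on $\cC'$, whose genus grows like $m^{2g}$. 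A coordinate pulled back from $\cC$, or a function of degree about the genus of $\cC'$, loses that factor.
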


\begin{pro}\label{pairhyp}
    Let $\cC_1, \cC_2$ be two real plane curves. Suppose that $\cC_1$ is hyperbolic, and that for infinitely many points $P\in\R^2$ there exists $v\in\Z^2$ such that $P\in \cC_1\cap \tau_v(\cC_2)$. Then $\cC_1$ and $\cC_2$ have infinitely many intersections on the real torus.
\end{pro}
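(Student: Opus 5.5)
The idea is to argue by contradiction and use Siegel's Theorem~\ref{SiegelThm} to show that a single $\Z^2$-orbit can meet the hyperbolic curve $\cC_1$ in only finitely many points. Suppose that $\cC_1$ and $\cC_2$ have only finitely many intersections on the real torus. Let $S\subset\R^2$ be the infinite set of points $P$ for which there is $v\in\Z^2$ with $P\in\cC_1\cap\tau_v(\cC_2)$. Each such $P$ projects to an intersection point of $\pi(\cC_1)$ and $\pi(\cC_2)$. So $\pi(S)$ is finite, and by pigeonhole there is a point $P_0=(x_0,y_0)\in\R^2$ such that infinitely many points of $S$ lie in the orbit $P_0+\Z^2$. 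In particular $\cC_1$ contains infinitely many points of the form $P_0+w$ with $w\in\Z^2$.

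Next I translate to integral points. Let $f(x,y)\in\R[x,y]$ be the defining polynomial of $\cC_1$, and consider the translated curve $\cC':=\tau_{-P_0}(\cC_1)$, defined by $g(x,y):=f(x+x_0,y+y_0)=0$. By the previous step, $\cC'$ has infinitely many points with coordinates in $\Z$. Let $K\subset\R$ be the field generated over $\Q$ by the finitely many coefficients of $f$ together with $x_0,y_0$. Then $K$ is finitely generated over $\Q$, the curve $\cC'$ is defined over $K$, and $R=\Z\subset K$ is finitely generated over $\Z$.

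It remains to check that $\cC'$ is hyperbolic. The translation $(x,y)\mapsto(x+x_0,y+y_0)$ is an automorphism of $\A^2$ and extends to an automorphism of $\pr^2$ fixing the line at infinity pointwise. Hence $\cC'$ and $\cC_1$ have isomorphic smooth completions, the same genus $g$, and the same number $d$ of points at infinity. So $2g-2+d>0$ still holds for $\cC'$. Theorem~\ref{SiegelThm} then says that $\cC'$ has only finitely many $\Z$-points, contradicting the previous paragraph. Therefore $\cC_1$ and $\cC_2$ have infinitely many intersections on the real torus.

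The main point needing care is applying Siegel's Theorem over a finitely generated field rather than a number field. This is exactly the generality of Theorem~\ref{SiegelThm} as stated, so no specialization argument is needed. If $\cC_1$ is reducible, I would run the same argument on an irreducible component containing infinitely many of the points $P_0+w$, with hyperbolicity understood componentwise.
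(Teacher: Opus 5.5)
Your proposal is correct and follows essentially the same route as the paper: argue by contradiction, use pigeonhole to find a single $\Z^2$-orbit that meets $\cC_1$ in infinitely many points, and contradict Siegel's Theorem over a finitely generated field. The only cosmetic difference is that you translate by $-P_0$ to obtain genuine $\Z$-points on a translated curve, checking that hyperbolicity is preserved. The paper instead notes directly that the points $P+v_i$ have coordinates in the finitely generated ring $\Z[x_0,y_0]$; your version just spells out details the paper leaves implicit.
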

\begin{proof}
     Suppose by contradiction that they have only finitely many intersections on the real torus. Then, there is a point $P\in\R^2$ and infinitely many integer vectors $v_i$, $i\in I$ such that $P+v_i\in \cC_1$. The points $P+v_i$ lie on a finitely generated ring over $\Q$. Therefore, Siegel's theorem implies that they cannot all lie on $\cC_1$. This leads us to a contradiction.
\end{proof}

In the following example, we show that the hypothesis in the previous proposition cannot be removed. More precisely, it is not sufficient to ask that the two curves are non-compact and one is hyperbolic.

\begin{es}
    Let $\cC$ be the plane curve defined by $y(y^2+3x^2)=4$. The curve $\cC$ has genus one since its completion in $\pr^2$ is a smooth cubic. From the defining equation, we get $0<y\leq 2$. Then, the projection on the real torus of the curve $\cC$ and the line defined by $y=0$ intersect only finitely many times. 
\end{es}

\section{Three lines}\label{sectionlines}
In this section, we address the instance of Question~\ref{QuestionTorus} involving three lines. We are given three affine lines $\ell_1,\ell_2,\ell_3\subset \R^2$, and we ask whether there are infinitely many triple intersections modulo the action of $\Z^2$ on $\R^2$. We will prove that under some assumptions, there are infinitely many points of triple intersection on the torus that are generated by the action of some automorphism of the plane which preserves the three lines (Proposition~\ref{infinitelines}), namely homotheties. Moreover,  we will prove that under stronger assumptions, there are infinitely many triple intersections modulo the action of the semigroup generated by integral translations and homotheties with integral ratio (Theorem~\ref{infSaction}). 
To prove Theorem~\ref{infinitelines}, we will suppose that the three lines have a common point, and we can make this assumption without losing generality if the three lines intersect on the torus. Moreover, we will suppose that they are not pairwise parallel. We start by analyzing separately the configurations for which these two assumptions do not hold. We denote by $O$ the point $(0,0)\in\R^2$.

\subsection{Degenerate configurations}

Let $\ell_1,\ell_2,\ell_3$ be three affine lines. Suppose that $\ell_1,\ell_2$ are parallel. Then, without loss of generality, we can suppose that there exist $\theta, \alpha \in \R$ such that
$$\ell_1:y=\theta x\quad \ell_2:y=\theta x+\alpha.$$
Moreover, we can translate the configuration to have $O\in \ell_1$. Then, the two lines meet on the torus if and only if there exist integers $m,n\in \Z$ such that 
\begin{equation*}
    \begin{cases}
        y=\theta x\\
        y=\theta (x - n)+\alpha -m,\\
    \end{cases}
\end{equation*}
has a solution. This lead to the condition $\alpha=\theta n+m$, that is $\alpha\in \langle 1,\theta\rangle_\Z$. In this case, the two lines coincide on the torus. 

If the line $\ell_3$ is parallel to $\ell_1$ and $\ell_2$, i.e., it is defined by the equation $y=\theta x+\beta$, then we have infinitely many triple intersections on the torus if and only if $\beta\in \langle 1,\theta\rangle_\Z$ and in this case they coincide on the torus. We can summarize as follows.

\begin{pro}
Let $\ell_1,\ell_2,\ell_3\subset \R^2$ be three parallel lines. If they are parallel to the $y$ axis, i.e. there exist $\alpha_1,\alpha_2,\alpha_3\in\R$ such that $\ell_i:y=\alpha_i$ for $i=1,2,3$, then they coincide on the torus if and only if $\alpha_1,\alpha_2,\alpha_3$ have the same fractional part. Otherwise, there exist $\theta,\beta_1,\beta_2,\beta_3\in\R$ such that 
$\ell_i:y=\theta x+\beta_i$ for $i=1,2,3$ and the three lines coincide on the torus if and only if
$$\beta_i-\beta_1\in \langle 1, \theta\rangle_\Z\quad\text{for }i=2,3.$$
\end{pro}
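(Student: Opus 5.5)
The statement has two cases, lines of the form $y=\alpha_i$ and lines of the form $y=\theta x+\beta_i$. Both reduce to deciding when two parallel lines have the same image on the torus. The phrase ``parallel to the $y$ axis'' in the statement is a slip, since $y=\alpha_i$ is horizontal. I will treat the vertical case $x=\alpha_i$ the same way and check it separately at the end.

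\emph{Horizontal lines.} The image of $\ell_i\colon y=\alpha_i$ on the torus is $\pi(\ell_i)=\{(x,y)\in\R^2/\Z^2 \mid y\equiv\alpha_i \pmod 1\}$. Two such images meet exactly when $\alpha_i-\alpha_j\in\Z$, and in that case they coincide. Otherwise they are disjoint. So a triple intersection exists if and only if the $\alpha_i$ have the same fractional part, and then the three images coincide. If some pair differs, there are no triple intersections at all.

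\emph{Non-horizontal case.} Write $\ell_i\colon y=\theta x+\beta_i$. We may translate so that $O\in\ell_1$, that is, $\beta_1=0$, replacing each $\beta_i$ by $\beta_i-\beta_1$. Then apply the computation preceding the statement to each pair $(\ell_1,\ell_i)$. A point $(x,\theta x)\in\ell_1$ satisfies $(x+n,\theta x+m)\in\ell_i$ if and only if $\beta_i=m-\theta n$, i.e.\ $\beta_i-\beta_1\in\langle 1,\theta\rangle_\Z$. This condition does not depend on the point. Hence either $\pi(\ell_1)\cap\pi(\ell_i)=\emptyset$, or every point of $\ell_1$ is $\Z^2$-equivalent to a point of $\ell_i$, and by symmetry $\pi(\ell_1)=\pi(\ell_i)$.

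It follows that the three lines have a common point on the torus if and only if both $\beta_2-\beta_1$ and $\beta_3-\beta_1$ lie in $\langle 1,\theta\rangle_\Z$, and in that case all three images coincide. So infinitely many triple intersections, or even one, occur precisely under the stated condition.

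\emph{Vertical case.} For lines $x=\alpha_i$ the same argument works after exchanging the roles of $x$ and $y$. The criterion is again equality of the fractional parts of the $\alpha_i$.

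I expect no real obstacle. The only point needing care is the dichotomy ``disjoint or equal'', which holds because the solvability condition is independent of the chosen point of $\ell_1$.
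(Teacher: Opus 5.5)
Your proof is correct and is essentially the paper's argument. You translate so that $O\in\ell_1$, observe that the solvability condition $\beta_i-\beta_1=m-\theta n\in\langle 1,\theta\rangle_\Z$ does not depend on the chosen point, and conclude that two parallel lines are either disjoint or equal on the torus. You are also right that $y=\alpha_i$ describes horizontal lines; this is just the case $\theta=0$ of the general criterion, since $\langle 1,0\rangle_\Z=\Z$. Your separate treatment of genuinely vertical lines $x=\alpha_i$ covers the case that the statement's wording leaves out.
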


Now we suppose that $\ell_1$ and $\ell_2$ coincide on the torus and $\ell_1$, $\ell_3$ are not parallel. We can suppose that $\ell_1$ and $\ell_3$ meet in $O$. If at least one of them has an irrational slope, for example $\ell_1$, then there are infinitely many intersections on the torus. Indeed, for every translation of $r_3$ by an integer vector $(m,n)\in\Z^2$ which is not parallel to $r_3$, we get a different intersection point on $r_1$, and they are also different on the torus for Proposition~\ref{denselines}.
 
On the other hand, if both have rational slope, then they have only finitely many intersections on the real torus. Therefore, we have proved the following proposition.

\begin{pro}
    Let $\ell_1,\ell_2,\ell_3$ be three lines defined by $\ell_i: \alpha_i x+\beta_iy+\gamma_i=0$, with $(\alpha_1,\beta_1)$ and $(\alpha_2,\beta_2)$ linearly dependent. Then there are infinitely many triple intersections on the torus if and only if 
    $$\gamma_1-\gamma_2\in \langle 1,\frac{\alpha_1}{\beta_1}\rangle_\Z$$
    and at least one between $\frac{\alpha_1}{\beta_1}$ and $\frac{\alpha_3}{\beta_3}$ is irrational.
\end{pro}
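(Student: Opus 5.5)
The plan is to reduce to the two-line analysis carried out just before the statement. There are two directions, and in both we first deal with the pair $\ell_1,\ell_2$ and then with $\ell_3$.

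\emph{Reduction to the case $\ell_1=\ell_2$ on the torus.} Since $(\alpha_1,\beta_1)$ and $(\alpha_2,\beta_2)$ are linearly dependent, $\ell_1$ and $\ell_2$ are parallel. We normalize both equations so that they share the same linear part. When $\beta_1\neq0$ this gives the form $y=\theta x+\beta$, and when $\beta_1=0$ we use the analogous vertical form. By the parallel-line computation above, $\pi(\ell_1)\cap\pi(\ell_2)\neq\emptyset$ if and only if the difference of the constant terms lies in $\langle 1,\theta\rangle_\Z$. In that case $\pi(\ell_1)=\pi(\ell_2)$. Otherwise $\pi(\ell_1)\cap\pi(\ell_2)=\emptyset$, there are no triple intersections at all, and the condition in the statement fails, so both sides of the equivalence are false.

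We then check that the normalized constant-term condition coincides with $\gamma_1-\gamma_2\in\langle 1,\alpha_1/\beta_1\rangle_\Z$ as written, after rescaling the equations. This is the one bookkeeping point where care is needed, including the degenerate case $\beta_1=0$, where the condition reads ``same fractional part''.

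\emph{Triple intersections reduce to pairs.} Assume $\pi(\ell_1)=\pi(\ell_2)$. Then the triple intersections on the torus are exactly $\pi(\ell_1)\cap\pi(\ell_3)$, so the question becomes one about a pair of lines. If $\ell_3$ is parallel to $\ell_1$, the three lines are pairwise parallel and the earlier proposition for three parallel lines applies. This subcase has to be matched against the statement; I expect it is implicitly excluded or handled there.

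If $\ell_3$ is not parallel to $\ell_1$, translate so that $\ell_1\cap\ell_3=O$. Suppose first that both slopes are rational. Then both projections are compact by Proposition~\ref{denselines}, and Theorem~\ref{compactintersections} (case 3) gives finitely many intersections.

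Suppose instead that one slope is irrational, say that of $\ell_1$. For $(m,n)\in\Z^2$ not parallel to $\ell_3$, the translate $\tau_{(m,n)}(\ell_3)$ meets $\ell_1$ in a point $P_{m,n}$. The map $(m,n)\mapsto P_{m,n}$ is affine, and distinct classes of $(m,n)$ modulo the direction of $\ell_3$ give distinct points of $\ell_1$. By injectivity of $\pi|_{\ell_1}$ (Proposition~\ref{denselines}), these points remain distinct on the torus. Hence there are infinitely many intersections.

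The main obstacle is this last injectivity step when only $\ell_3$ has irrational slope. In that case we swap roles and translate $\ell_1$ instead, so that the intersection points lie on $\ell_3$, where $\pi$ is injective. We then need infinitely many translation vectors that give distinct points. This follows because the points are parametrized by a rank-$2$ lattice, of which only a rank-$\le1$ sublattice (the direction of $\ell_1$, when that slope is rational) gives repeated points.
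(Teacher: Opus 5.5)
Your proposal is correct and matches the paper's argument. Parallel lines either coincide or are disjoint on the torus, so everything reduces to $\pi(\ell_1)\cap\pi(\ell_3)$. That set is finite by Theorem~\ref{compactintersections} when both slopes are rational, and infinite by injectivity of $\pi$ on the irrational-slope line otherwise. The case where only $\ell_3$ has irrational slope is just the symmetric one, not a real obstacle.

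You are also right about the two tacit assumptions, and you should state them as hypotheses rather than leave them open. First, the condition only makes sense after normalizing $\ell_1,\ell_2$ to have equal linear parts with $\beta_1=\beta_2=1$. Second, $\ell_3$ must not be parallel to $\ell_1$, as the paper assumes in the preceding discussion. Without that, $\ell_1=\ell_2\colon y=\sqrt{2}x$ and $\ell_3\colon y=\sqrt{2}x+\sqrt{3}$ satisfy the stated condition yet have no triple intersections.
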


Now we consider three lines $\ell_1,\ell_2,\ell_3$ which are not pairwise parallel. Then, we can suppose without loss of generality
$$\ell_1: y=\theta_1 x\quad \ell_2:y=\theta_2x+\tau\quad \ell_3:\alpha x+\beta y+\gamma=0, $$
with $\theta_1,\theta_2,\tau,\alpha,\beta,\gamma\in\R$. They meet on the torus if and only if
$$\begin{cases}
    y=\theta_1 x\\
    y=\theta_2(x+m_1)+\tau +n_1\\
    \alpha (x+m_2)+\beta (y+n_2)+\gamma=0
\end{cases}$$
has a solution for some $n_1,m_1,n_2,m_2$. This is equivalent to the equation 
$$\alpha\frac{\theta_2m_1+\tau+n_1}{\theta_2-\theta_1}+\beta\theta_1\frac{\theta_2m_1+\tau+n_1}{\theta_2-\theta_1}+\beta n_2+\gamma=0,$$
which means that the coefficients of the three lines satisfy an algebraic relation over $\Q$.

\subsection{Main case}

In this paragraph, we classify the triples of lines with infinitely many triple intersections on the real torus. The main idea is to start from two triple intersections and generate infinitely many others through homotheties centred at one of them.

\begin{thm}\label{Homotheties}
    Let $\ell_1, \ell_2,$ and $\ell_3$ be three affine lines with two distinct triple intersections on the torus. Suppose that one line has an irrational slope. Then, 
    there are infinitely many distinct triple intersections on the torus.
\end{thm}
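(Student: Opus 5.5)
Place one triple intersection at the origin: after a translation by a lift, assume the lines $\ell_1,\ell_2,\ell_3$ all meet the lattice orbit of $O$. Concretely, after translating each $\ell_i$ by a suitable integral vector (which does not change its image on the torus), we may assume $O\in\ell_1\cap\ell_2\cap\ell_3$. Then each $\ell_i$ is a line through the origin, $\ell_i=\R w_i$ for a direction vector $w_i$.

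The second triple intersection gives a point $P\in\ell_3$ and vectors $v_1,v_2\in\Z^2$ with $P+v_1\in\ell_1$ and $P+v_2\in\ell_2$. Moreover $\pi(P)\neq\pi(O)$, so $P\notin\Z^2$. Now apply the homotheties $h_k(x)=kx$ with $k\in\Z$. Since each $\ell_i$ passes through $O$, it is preserved by $h_k$. Hence $kP\in\ell_3$, $kP+kv_1\in\ell_1$ and $kP+kv_2\in\ell_2$, with $kv_1,kv_2\in\Z^2$. Therefore $\pi(kP)$ is a triple intersection on the torus for every $k\in\Z$.

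It remains to show that the points $\pi(kP)$, $k\in\Z$, are infinitely many. They form the cyclic subgroup of $\R^2/\Z^2$ generated by $\pi(P)$, which is infinite exactly when $P\notin\Q^2$. So the main obstacle is excluding $P\in\Q^2$, and here the irrational slope hypothesis enters. Suppose $P\in\Q^2$. Then $P+v_1$ and $P+v_2$ are rational points, and $P\neq0$ since $P\notin\Z^2$.
\begin{itemize}
\item If $P+v_i\neq O$, then $\ell_i$ passes through the origin and a nonzero rational point, so it has rational slope (or is vertical).
\item If $P+v_i=O$, then $P=-v_i\in\Z^2$, a contradiction. So this case does not occur for $i=1,2$.
\item Similarly, $\ell_3$ contains $O$ and $P\neq O$ with $P$ rational, so it has rational slope.
\end{itemize}
All three lines would then have rational slope, contradicting the hypothesis. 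Hence $P\notin\Q^2$, $\pi(P)$ has infinite order, and the points $\pi(kP)$ are pairwise distinct, giving infinitely many triple intersections.

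The step that needs the most care is the first normalization: we must check that a triple intersection on the torus lets us translate each line individually by an integral vector so that all three pass through $O$. This holds because a triple intersection means a point $Q$ with $Q+u_i\in\ell_i$ for some $u_i\in\Z^2$. Translating by $-Q$ overall and replacing $\ell_i$ by $\ell_i-u_i$ does exactly this, and neither operation changes the configuration on the torus up to the global translation by $\pi(Q)$. Under the global translation, the second intersection becomes another point $P\neq O$ on the torus, as used above.
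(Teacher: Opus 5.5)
Your proof is correct and follows the paper's route: translate so that all three lines pass through $O$, then apply the integer homotheties $x\mapsto kx$ to a second triple intersection. The only difference is cosmetic. The paper takes the lift on the irrational-slope line and uses injectivity of $\pi$ on that line. You instead show $P\notin\Q^2$, so that $\pi(P)$ has infinite order in $\R^2/\Z^2$, which also lets you skip the paper's $\mathrm{SL}_2(\Z)$ normalization that avoids vertical lines.
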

\begin{proof}
    We can assume, up to translations and up to a map induced by a matrix in $\mathrm{SL}_2(\Z)$, that $\{O\}= \ell_1\cap \ell_2\cap \ell_3$ and that they are all distinct from the line $x=0$. 

    Under this assumptions, there exist three distinct real numbers $\theta_1,\theta_2,\theta_3$ such that 
    $$\ell_i=\left\{t(1,\theta_i)\mid t\in\R\right\}\quad i=1,2,3.$$
    We ask whether there exist infinitely many triples $(t_1,t_2,t_3)$ such that for $i,j\in\{1,2,3\}, i\neq j$ the following properties hold:
    \begin{align}
    t_i-t_j\in\Z\label{prop1}\\
    t_i\theta_i-t_j\theta_j\in\Z\label{prop2}.
    \end{align}
    We assume, without loss of generality, that $\ell_3$ has an irrational slope. Then distinct points on $\ell_3$ correspond to distinct points on the torus. Let $T=(t,t\theta_3)$ be a triple intersection different from $O$. Then there exist $m_1,n_1,m_2,n_2$ such that 
    $$T_1=(t+m_1,t\theta_3+n_1)\in \ell_1\setminus \{O\}\quad \text{and}\quad T_2=(t+m_2,t\theta_3+n_2)\in \ell_2\setminus\{O\}.$$ 
    Let $k\neq 0$ be an integer, and consider the homothety $\Phi_k$ with center $(0,0)$ and ratio $k$. Then $\Phi_k(T_1)\in \ell_1$ and $\Phi_k(T_2)\in \ell_2$. Moreover $$\Phi_k(T_1)-\Phi_k(T)=k(m_1,n_1)\in\Z^2\quad \text{and}\quad  \Phi_k(T_2)-\Phi_k(T)=k(m_2,n_2)\in\Z^2.$$
    The set $$\big\{\Phi_k(T)\mid k\in \Z\setminus\{0\}\big\}\subset \ell_3$$
    is an infinite set of points in $\R^2$ which correspond to distinct triple intersections on the torus.
\end{proof}

From Theorem~\ref{Homotheties}, we can construct examples of triples of lines with infinite triple intersections on the torus. Indeed, we can choose a line $\ell$ with irrational slope, a point $P$ on $\ell$ different from $O$ and define the points 
$$P_1=P+(m_1,n_1), \quad P_2=P+(m_2,n_2)\quad  \text{with } (m_1,n_1),(m_2,n_2)\in\Z^2.$$
Then the lines $\ell_1$ through $O$ and $P_1$ and $\ell_2$ through $O$ and $P_2$ have infinite triple intersections on the real torus.  

As a consequence of Theorem~\ref{Homotheties}, given three affine lines such that one of them has irrational slope, asking whether there are infinitely many points of triple intersection on the torus is equivalent to asking whether there are two different points of triple intersection on the torus. The complete answer is given in the following proposition. We make the same assumptions as in the proof of Theorem~\ref{Homotheties}.

\begin{pro}\label{infinitelines}
    Let $\theta_1, \theta_2, \theta_3$ be distinct real numbers. Then the affine real lines
    $$\ell_1:y=\theta_1x\quad \ell_2:y=\theta_2x\quad \ell_3:y=\theta_3x$$
    have infinitely many triple intersections on the real torus if and only if, except for reordering $\theta_1, \theta_2,$ and $\theta_3$, one of the following conditions holds:
    \begin{compactenum}[(a)]
        \item $\theta_1, \theta_2, \theta_3$ are irrational numbers and $\theta_1,\theta_2,\theta_3,\theta_1\theta_2,\theta_2\theta_3,\theta_3\theta_1$ satisfies the following relation: 
        \begin{align*}
        &a\theta_1+b\theta_2+c\theta_3+d\theta_1\theta_2+e\theta_2\theta_3+f\theta_3\theta_1=0\\
        \text{with }(a,b,c,d,&e,f)\in\Z^6\setminus\{(0,0,0,0,0,0)\},\text{ and } a+b+c=d+e+f=0.
        \end{align*}
        \item $\theta_1$ and $\theta_2$ are irrational numbers, $\theta_3$ is rational, and there exist a matrix  
        $$A:=\begin{pmatrix}a &b\\c & d\end{pmatrix}\in \mathrm{GL}_2(\Q)$$
        such that $\theta_1=A(\theta_2)$ and $\theta_3$ is fixed by the action of $A$.
    \end{compactenum}
\end{pro}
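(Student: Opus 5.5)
The plan is to translate triple intersections into a Diophantine condition and then use Theorem~\ref{Homotheties}, which reduces ``infinitely many'' to ``at least one besides $O$''. As in its proof, a triple intersection is a triple $(t_1,t_2,t_3)$ satisfying \eqref{prop1} and \eqref{prop2}. Put $t:=t_3$, $t_1=t+p$, $t_2=t+q$ with $p,q\in\Z$, and write
$$(t+p)\theta_1-t\theta_3=r,\qquad (t+q)\theta_2-t\theta_3=s,\qquad r,s\in\Z.$$
Since $\theta_1\neq\theta_3$ and $\theta_2\neq\theta_3$, these read $t(\theta_1-\theta_3)=r-p\theta_1$ and $t(\theta_2-\theta_3)=s-q\theta_2$. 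Eliminating $t$ and expanding gives
$$-s\theta_1+r\theta_2+(s-r)\theta_3+(q-p)\theta_1\theta_2-q\theta_2\theta_3+p\theta_3\theta_1=0.$$
This has exactly the shape in (a): $(a,b,c)=(-s,r,s-r)$ and $(d,e,f)=(q-p,-q,p)$, so $a+b+c=d+e+f=0$. Conversely, every integer relation $(a,b,c,d,e,f)$ with these two zero sums determines $p,q,r,s$ uniquely (take $p=f$, $q=-e$, $r=b$, $s=-a$). Then $t:=(r-p\theta_1)/(\theta_1-\theta_3)$ is consistent with both equations, so it yields a triple intersection.

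\emph{All slopes irrational.} It remains to match ``nonzero relation'' with ``triple intersection different from $O$''. If $t=0$, then $r=p\theta_1$ and $s=q\theta_2$, and irrationality forces $p=q=r=s=0$. So a nonzero relation gives $t\neq0$. Since $\ell_3$ has irrational slope, $\pi$ is injective on $\ell_3$ (Proposition~\ref{denselines}), so this is a second triple intersection on the torus. Theorem~\ref{Homotheties} then gives infinitely many. Conversely, infinitely many intersections give some $t\neq 0$, hence a nonzero tuple, i.e. a relation as in (a).

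\emph{Some slope rational.} If all three slopes are rational, Theorem~\ref{compactintersections} applied to $\ell_1,\ell_2$ already gives finiteness. If two slopes are rational, say those of $\ell_2,\ell_3$, their pairwise intersections on the torus are finite by the same theorem, so triple intersections are finite. In the remaining case $\theta_3\in\Q$ and $\theta_1,\theta_2\notin\Q$, I substitute $\theta_3$ into the displayed relation. It becomes
$$\alpha+\beta\theta_1+\gamma\theta_2+\delta\theta_1\theta_2=0,\qquad\alpha,\beta,\gamma,\delta\in\Q,$$
that is, $\theta_1=-(\alpha+\gamma\theta_2)/(\beta+\delta\theta_2)$, a Möbius transformation $A$ with rational entries. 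Nondegeneracy of $A$ (determinant $\neq 0$) should follow from the irrationality of $\theta_1$. The same argument as before shows that $t\neq0$ exactly when the tuple is nonzero. Finally I check that the zero-sum constraints amount to $A(\theta_3)=\theta_3$. Here substitution should help: putting $\theta_1=\theta_2=\theta_3$ into the relation makes it vanish identically, because $a+b+c=0$ and $d+e+f=0$. This is precisely the fixed-point equation for $A$ at $\theta_3$. For the converse, from $A\in\mathrm{GL}_2(\Q)$ fixing $\theta_3$ I clear denominators, rewrite the relation in the form above, and solve for $p,q,r,s\in\Z$ after scaling.

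The main obstacle is this last equivalence, and its bookkeeping is where I expect the difficulty. The map $(p,q,r,s)\mapsto(\alpha,\beta,\gamma,\delta)$ goes from four parameters to four, but its image must be exactly the Möbius maps fixing $\theta_3$. That is a three-dimensional family up to scaling, and scaling is absorbed by clearing denominators. So I expect a bijection up to scalar, but this needs to be verified by an explicit linear-algebra computation.
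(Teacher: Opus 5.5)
Your all-irrational case is correct and is the paper's argument; your $(p,q,r,s)$ is the paper's $(-m_1,-m_2,n_1,n_2)$. The reductions when two or three slopes are rational also match the paper. The gap is in the case $\theta_3\in\Q$, $\theta_1,\theta_2\notin\Q$.

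There $\pi$ is not injective on $\ell_3$, so ``$t\neq 0$'' no longer certifies a triple intersection different from $O$ on the torus. Moreover, the substitution map
$$(p,q,r,s)\longmapsto(\alpha,\beta,\gamma,\delta)=\bigl((s-r)\theta_3,\ p\theta_3-s,\ r-q\theta_3,\ q-p\bigr)$$
is not a bijection up to scalar. It sends a $4$-dimensional space onto the $3$-dimensional linear space of matrices fixing $\theta_3$ (only $2$-dimensional up to scaling), so it has a kernel, namely $\{(p,p,p\theta_3,p\theta_3)\}$. For $p\neq 0$ with $p\theta_3\in\Z$, these are nonzero integral tuples with $t=-p\neq 0$, yet the relation vanishes identically and no $A$ exists. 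Geometrically they are just the points where the $\Z^2$-orbit of $O$ meets $\ell_3$: they give $t_1=t_2=0$. So your forward step ``nonzero tuple $\Rightarrow$ M\"obius map $A$'' fails for them. In the converse, ``$t\neq 0$, hence a second intersection, hence Theorem~\ref{Homotheties}'' is not justified as written.

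The repair is to measure distinctness on a line of irrational slope. The intersection differs from $O$ on the torus if and only if $t_1=t+p\neq 0$. If $t_1=0$, injectivity of $\pi$ on $\ell_2$ forces $t_2=0$, i.e.\ $q=p=-t$ and $r=s=p\theta_3$, which is exactly the kernel; conversely, kernel tuples give $t_1=0$. So a second triple intersection exists if and only if some integral tuple has $(\alpha,\beta,\gamma,\delta)\neq 0$.

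With that in place, your remaining steps go through:
\begin{itemize}
\item $\beta+\delta\theta_2\neq 0$, since otherwise irrationality of $\theta_2$ kills all four coefficients.
\item $\det A\neq 0$, since otherwise $A$ is constant and $\theta_1$ would be rational.
\item Your evaluation at $\theta_1=\theta_2=\theta_3$ gives exactly $A(\theta_3)=\theta_3$.
\item Conversely, the image is the whole fixed-point space by the rank count. So any $A$ as in (b), suitably scaled, lifts to an integral tuple outside the kernel (take $p$ with $p\theta_3\in\Z$).
\end{itemize}
This is precisely the point the paper handles by separating the case where Equation~\eqref{impo} is trivial, where it finds $t_1=t_2=0$.
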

\begin{proof}
Suppose that there is a triple intersection different from $O$. By Properties~\eqref{prop1},\eqref{prop2} there exist \mbox{$t_1,t_2,t_3\in\R$,} $(m_1,m_2)\in\Z^2\setminus {(0,0)},$ and $(n_1,n_2)\in\Z^2\setminus {(0,0)}$ satisfying 
\begin{align*}
t_3=t_1+m_1,&\quad t_3=t_2+m_2,\\ 
n_1=t_1\theta_1-t_3\theta_3,&\quad n_2=t_2\theta_2-t_3\theta_3.
\end{align*}
Combining the equations, we get
$$n_1=(t_3-m_1)\theta_1-t_3\theta_3\quad\text{and}\quad n_2=(t_3-m_2)\theta_2-t_3\theta_3.$$
From the first one we get $t_3(\theta_1-\theta_3)=n_1+m_1\theta_1$ and since $\theta_1\neq \theta_3$ we can substitute the expression for $t$ in the second one, obtaining
$$n_2=\left(\frac{n_1+m_1\theta_1}{\theta_1-\theta_3}-m_2\right)\theta_2-\frac{n_1+m_1\theta_1}{\theta_1-\theta_3}\theta_3.$$
Rearranging the terms, we get
\begin{equation}\label{fund}
(\theta_1-\theta_3)(n_2+m_2\theta_2)=(\theta_2-\theta_3)(n_1+m_1\theta_1),
\end{equation}
which is equivalent to 
$$n_2\theta_1-n_1\theta_2+(n_1-n_2)\theta_3+(m_2-m_1)\theta_1\theta_2-m_2\theta_2\theta_3+m_1\theta_3\theta_1=0.$$

Moreover, we get the values
$$t_3=\frac{n_1+m_1\theta_1}{\theta_1-\theta_3}\quad t_1=\frac{n_1+m_1\theta_3}{\theta_1-\theta_3}\quad t_2=\frac{n_2+m_2\theta_3}{\theta_2-\theta_3}.$$ 
If $\theta_1,\theta_2,\theta_3$ are irrational numbers satisfying the relation stated in (a), then there exists a non-trivial integral solution $(m_1,n_1,m_2,n_2)\neq (0,0,0,0)$ to Equation~\eqref{fund}, and we can suppose $(m_1,n_1)\neq (0,0)$. From Equation~\eqref{fund} we get $(m_2,n_2)\neq (0,0)$. Therefore, $t_1,t_2,t_3\neq 0$, and there exists a triple intersection on the torus which is different from $O$. It follows from Theorem~\ref{Homotheties} that there are infinitely many of them.

Now, we suppose that $\theta_1,\theta_2$ are irrational and $\theta_3$ is rational. We can rewrite Equation~\eqref{fund} as
\begin{equation}\label{impo}
    \theta_1\big[\theta_2(m_2-m_1)+n_2+m_1\theta_3\big]=\theta_2(n_1+\theta_3m_2)+\theta_3(n_2-n_1).
\end{equation}
Then, if Equation~\eqref{impo} is non-trivial and it has a non-trivial solution, there exist $a,b,c,d\in\Q$ satisfying
$$\theta_1=\frac{a\theta_2+b}{c\theta_2+d}\quad \text{and}\quad\theta_3c+d=\frac{b}{\theta_3}+a.$$ 
We remark that $A=\big(\begin{smallmatrix}a &b\\c & d\end{smallmatrix}\big)$ is invertible since otherwise $\theta_1$ would be rational, and the second equation is equivalent to $c\theta_3^2+(d-a)\theta_3-b=0$, which expresses the fact that $\theta_3$ is a fixed point for $A$.

Now suppose that there exists $A$ as in statement (b). We have 
$$\begin{cases}
    m_2-m_1=c\\
    n_2+m_1\theta_3=d\\
    n_1+\theta_3m_2=a\\
    \theta_3(n_2-n_1)=b
\end{cases}$$
and we can suppose $\frac{b}{\theta_3},a,c,d\in\Z$. Then, choosing $m_1\in\Z$ such that $m_1\theta_3\in\Z$ we have the solution
$$m_2=c+m_1\quad n_2=d-m_1\theta_3\quad n_1=d-\frac{b}{\theta_3}-m_1\theta_3.$$
It gives 
$$t_1=\frac{d-\frac{b}{\theta_3}}{\theta_1-\theta_3}\quad\text{and}\quad t_2=\frac{\theta_3c+d}{\theta_2-\theta_3},$$
hence the choice of $m_1$ does not affect the values of $t_1,t_2$. Then there are infinitely many triple intersections if and only if $\theta_3c+d\neq 0$ or $d-\frac{b}{\theta_3}\neq 0$ since $a,b,c,d$ depend on a rational parameter. These conditions hold since $\theta_3\in\Q$ is fixed by $A$, and $\theta_3c+d = 0,d-\frac{b}{\theta_3}= 0$ are respectively equivalent to $A(\theta_3)=+\infty$ and $A^{-1}(\theta_3)=+\infty$.

If Equation~(\ref{impo}) is trivial we have 
$$m_2=m_1\quad n_2+m_1\theta_3=0\quad n_1+\theta_3m_2=0\quad n_1=n_2$$
and then $t_1=t_2=0$, so there is only one triple intersection.

If at least two between $\theta_1,\theta_2,$ and $\theta_3$ are rational, then the three lines have only finitely many triple intersections on the real torus from Theorem~\ref{compactintersections}.
\end{proof}

\begin{oss}
    The hypothesis of Proposition~\ref{Homotheties} cannot be removed. Indeed, three lines with rational slopes can have more than two triple intersections on the real torus, but only finitely many. 
\end{oss}

\begin{oss}
    From a geometric perspective, condition (a) of Proposition~\ref{infinitelines} implies that the point $(\theta_1,\theta_2,\theta_3)\in\A^3(\R)$ lies on a quadric $\mathcal{Q}$ in $\A^3$ defined over $\Q$ that contains the line $\ell :x=y=z$. However, not every point on $\mathcal{Q}$ corresponds to a triple of lines with infinitely many triple intersections on the real torus; for example, this fails for the points of $\ell$.
\end{oss}

\subsection{Action by homotheties}

In the previous paragraph, the presence of infinitely many orbits intersecting the three lines was explained by the action of another semigroup of endomorphisms of the plane, namely the semigroup of homotheties with center $O$ and integral ratio.  

Let $\Gamma$ be the set
$$\Gamma:=\big\{\tau_v\mid v\in\Z^2\big\}\cup \big\{\Phi_k\mid k\in\Z\setminus\{0\}\big\},$$
where $\tau_v$ is the translation of vector $v$ and $\Phi_k$ is the homothety of ratio $k$. Let $\mathcal{S}:=\langle \Gamma\rangle$ be the semigroup generated by the elements of $\Gamma$.  

In \cite[Proposition 4.1]{CZ2023}, is shown that, given three lines in $\pr^2$ and a projective automorphism $\beta$ of $\pr^2$, if there are infinitely many $\beta$-orbits of points in $\pr^2$ that intersect all the three lines, then there exists a projective automorphism $\gamma$ of $\pr^2$ such that these orbits are contained in finitely many orbits for the group generated by $\beta$ and $\gamma$. In contrast, under suitable hypotheses, we prove that the set of triple intersections of the three lines on the real torus consists of infinitely many different grand orbits for the action of $\mathcal{S}$, which are defined as follows.

\begin{defi}
    The \emph{grand orbit} of $x\in\R^2$ under the action of a semigroup $S$ is the set
    $$GO(x):=\left\{y\in\R^2 \mid \alpha(x)=\beta(y) \text{ for some } \alpha,\beta\in S\right\}.$$
\end{defi}

We make the same assumptions as in the proof of Theorem~\ref{Homotheties}.

\begin{thm}\label{infSaction}
    Let $\theta_1,\theta_2,\theta_3$ be real numbers and $\ell_1,\ell_2,\ell_3$ be affine real lines defined by $\ell_i:y=\theta_ix$. Then, there are infinitely many different grand orbits of points of $\R^2$ under the action of $\mathcal{S}$ which intersect $\ell_1, \ell_2$, and $\ell_3$  if and only if $\theta_1,\theta_2,\theta_3$ belongs to the same orbit under the action of $\mathrm{PGL}_2(\Q)$ over $\mathbb{P}^1(\R)$. In particular, $\ell_1,\ell_2,\ell_3$ satisfy the condition (a) of Theorem{\normalfont~\ref{infinitelines}}.  
\end{thm}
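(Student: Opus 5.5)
Every triple intersection point lies on the union of the three lines. Translations and homotheties both preserve the rational affine structure, so I will work directly with the grand-orbit equivalence relation. First I describe explicitly when two points $P,Q\in\R^2$ lie in the same grand orbit under $\mathcal S$. Every element of $\mathcal S$ has the form $x\mapsto kx+v$ with $k\in\Z\setminus\{0\}$ and $v\in\Z^2$. Hence $\alpha(P)=\beta(Q)$ means $kP+v=k'Q+v'$, i.e.\ $Q\in \frac{k}{k'}P+\Q^2$. Conversely, if $Q=rP+w$ with $r\in\Q^*$ and $w\in\Q^2$, clearing denominators gives such $\alpha,\beta$. So grand orbits are exactly the orbits of the group $\Q^*\ltimes\Q^2$ acting on $\R^2$. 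In particular a point $P=(t,t\theta)$ is determined up to grand orbit by the class of the pair $(t,t\theta)$ modulo this group.

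Next I use the parametrization from the proof of Proposition~\ref{infinitelines}. A triple intersection is given by $(t_1,t_2,t_3)$ with $t_3-t_i\in\Z$ and $t_3\theta_3-t_i\theta_i\in\Z$. Up to grand orbit, the point $T=(t_3,t_3\theta_3)$ may be replaced by $T_i=(t_i,t_i\theta_i)$, since these differ by integral vectors.

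The key reformulation concerns the $\Q$-span $V_P=\langle 1,t,t\theta\rangle_\Q\subset\R$ attached to $P=(t,t\theta)$, and the two-dimensional space $\langle t,t\theta\rangle_\Q$ modulo rationals. Consider a triple intersection $P$ not in the grand orbit of $O$. Its coordinates satisfy $t_3(\theta_1-\theta_3)=n_1+m_1\theta_1$, and similarly for the other indices. I will show that $\theta_1,\theta_2,\theta_3$ are then related by a fractional linear transformation over $\Q$. The point is that $t_3\in\Q(\theta_1,\theta_3)$ and $t_3\in\Q(\theta_2,\theta_3)$ with compatible expressions, and the map $\theta\mapsto$ (slope of the line through the grand-orbit representative) intertwines $\Q^*\ltimes\Q^2$ with $\mathrm{PGL}_2(\Q)$ acting on the slope. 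Concretely, the line $y=\theta_i x$ passes through a point grand-orbit equivalent to $P=(x_0,y_0)$ iff $\theta_i=\frac{y_0+q}{x_0+p}$ for some $p,q\in\Q$ (after rescaling). So the set of slopes of lines through $O$ meeting the grand orbit of $P$ is $\{(y_0+q)/(x_0+p)\}$. For these to include $\theta_1,\theta_2,\theta_3$, we need $y_0=\theta_i x_0+(\theta_i p_i-q_i)$ for each $i$. Eliminating $x_0,y_0$ from three such equations gives exactly that $\theta_1,\theta_2,\theta_3$ lie in a common $\mathrm{PGL}_2(\Q)$-orbit. One checks this via the $3\times 3$ determinant in $1,\theta_i,\theta_i p_i-q_i$, with rational $p_i,q_i$.

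For the converse, suppose $\theta_j=A_j(\theta_1)$. I will produce infinitely many inequivalent points $(x_0,y_0)$ solving the three equations. These form a family parametrized by $(p_i,q_i)\in\Q^6$ subject to the compatibility conditions, and the resulting $x_0$ ranges over infinitely many classes in $\R/\Q$, namely over a $\Q$-affine family involving $\theta_1$. I then check that these classes give distinct grand orbits using the invariant $x_0$ mod $\Q$ up to $\Q^*$. The final claim, that condition (a) holds, follows by substituting $\theta_j=A_j(\theta_1)$ into Equation~\eqref{fund}.

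The main obstacle is the converse: ensuring the constructed points are pairwise grand-orbit inequivalent and not equivalent to $O$. I would handle it with the invariant $\Q^*\cdot(\text{class of }(x_0,y_0)\text{ in }\R^2/\Q^2)$, showing that it takes infinitely many values as the rational parameters vary, because $\theta_1\notin\Q$. Rational slopes would need a separate degenerate check.
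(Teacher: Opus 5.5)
Your first step is correct, and it packages Lemma~\ref{lemmahom} more cleanly than the paper does: grand orbits are exactly the $\Q^*\ltimes\Q^2$-orbits, and $GO(P)$ meets $\ell_i$ iff $P\in\ell_i+\Q^2$.

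\textbf{Forward direction.} This is where your argument breaks. Eliminating $(x_0,y_0)$ from $y_0=\theta_ix_0+(\theta_ip_i-q_i)$, $i=1,2,3$, yields a single determinant condition,
\[
(\theta_1p_1-q_1)(\theta_3-\theta_2)+(\theta_2p_2-q_2)(\theta_1-\theta_3)+(\theta_3p_3-q_3)(\theta_2-\theta_1)=0.
\]
This is exactly relation (a) of Proposition~\ref{infinitelines}, not a $\mathrm{PGL}_2(\Q)$-relation. One nontrivial grand orbit cannot force such a relation. For example, take $\theta_3=\sqrt2$, $\theta_1=\sqrt2+1/\sqrt3$, $\theta_2=\sqrt6/(\sqrt3+1)$. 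The point $P=(\sqrt3,\sqrt6)\notin\Q^2$ lies on $\ell_3$, with $P+(0,1)\in\ell_1$ and $P+(1,0)\in\ell_2$, yet $\Q(\theta_1)\neq\Q(\theta_3)$.

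The missing idea is to use \emph{two} grand-orbit-inequivalent nontrivial triple intersections. Write \eqref{fund} for both and divide; this cancels $\frac{\theta_2-\theta_3}{\theta_1-\theta_3}$ and leaves $\frac{n_2+m_2\theta_2}{n_2'+m_2'\theta_2}=\frac{n_1+m_1\theta_1}{n_1'+m_1'\theta_1}$. Inequivalence forces $(m_i,n_i)$ and $(m_i',n_i')$ to be linearly independent, since proportional vectors give rationally proportional $t$. Hence $\theta_2\in\mathrm{PGL}_2(\Q)\cdot\theta_1$. This is the paper's argument, and that direction is fine.

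\textbf{Converse.} Your converse is only a sketch, and it cannot be completed, because the ``if'' direction is false for slopes of degree at least $3$. Take $\theta=\sqrt[3]{2}$ and $(\theta_1,\theta_2,\theta_3)=(\theta+1,2\theta,\theta)$; these lie in one $\mathrm{PGL}_2(\Q)$-orbit. Your own criterion, $y_0-\theta_ix_0\in\Q+\Q\theta_i$, requires $y_0-\theta x_0$, $x_0$ and $\theta x_0$ all to lie in $\Q+\Q\theta$. Hence $x_0\in\Q$ and $y_0\in\Q+\Q\theta$, so there are only two grand orbits, those of $O$ and $(0,\theta)$. Equivalently, \eqref{fund} forces $m_1=n_2=0$ and $n_1=2m_2$, so every triple intersection is of the form $2m(1,\theta)$.

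In general, for $\theta_3\notin\Q$, the grand orbits of triple intersections correspond to the $\Q^*$-classes in
\[
\frac{\Q+\Q\theta_1}{\theta_1-\theta_3}\cap\frac{\Q+\Q\theta_2}{\theta_2-\theta_3}.
\]
So there are infinitely many iff these two $\Q$-planes coincide. This holds when the three irrational slopes lie in one real quadratic field, but in general it is strictly stronger than sharing an orbit.

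The paper's converse has the same defect. Its linear system is equivalent to $(c_1\theta_3+d_1)(n_1+m_1\theta_1)=(c_2\theta_3+d_2)(n_2+m_2\theta_2)$. That gives \eqref{equhom} only if $\frac{c_1\theta_3+d_1}{c_2\theta_3+d_2}=\frac{\theta_2-\theta_3}{\theta_1-\theta_3}$, which fails in the example above: the left side is rational, the right side is $\theta$.

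Likewise, the final clause must be derived from infinitely many triple intersections via Proposition~\ref{infinitelines}, not by substituting M\"obius relations into \eqref{fund}. For transcendental $\theta$, the slopes $(\theta+1,1/\theta,\theta)$ share an orbit, yet \eqref{fund} has only the trivial solution.
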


\begin{lemma}\label{lemmahom}
    Let $t,t'\in \R$ $m,n,k\in\Z$ such that 
    $$T_1=(t+m,t\theta_3+n)\in \ell_1\quad\text{and}\quad T_1'=(t'+km,t'\theta_3+kn)\in \ell_1.$$ 
    Then $T, T'$ belong to the same grand orbit under the action of $\mathcal{S}$.
\end{lemma}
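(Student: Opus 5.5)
The plan is to show directly that $T'$ is the image of $T$ under a single homothety $\Phi_k$, which places $T'$ in the forward $\mathcal{S}$-orbit of $T$ and hence in its grand orbit. Here, as in the proof of Theorem~\ref{Homotheties}, I read $T=(t,t\theta_3)$ and $T'=(t',t'\theta_3)$ as the points of $\ell_3$ with $T_1=T+(m,n)$ and $T_1'=T'+k(m,n)$. I also read the lemma with $k\neq 0$, since $\Phi_0\notin\Gamma$; this restriction is forced, as discussed at the end. Finally, $\theta_1\neq\theta_3$ is assumed, as in Proposition~\ref{infinitelines}.

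The first step writes out the membership conditions. The condition $T_1\in\ell_1$ reads $t\theta_3+n=\theta_1(t+m)$, that is,
$$t(\theta_3-\theta_1)=\theta_1 m-n.$$
In the same way, $T_1'\in\ell_1$ gives
$$t'(\theta_3-\theta_1)=k(\theta_1 m-n).$$
Since $\theta_1\neq\theta_3$, we can divide by $\theta_3-\theta_1$ and obtain $t'=kt$. Therefore
$$T'=(kt,kt\theta_3)=\Phi_k(T).$$
This is exactly the homothety mechanism already used in Theorem~\ref{Homotheties}. The translation vector scales consistently as well, because $\Phi_k(T_1)-\Phi_k(T)=k(m,n)$ and $\Phi_k(T_1)=T_1'$.

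The second step concludes from the definition of grand orbit. Take $\alpha=\Phi_k\in\Gamma\subset\mathcal{S}$ and $\beta=\tau_{(0,0)}=\mathrm{id}\in\Gamma$. Then $\alpha(T)=T'=\beta(T')$, so $T'\in GO(T)$.

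There is no real obstacle in this argument; the only delicate point is the degenerate case. If $k=0$, then $t'=0$ and $T'=O$. To decide whether $O$ lies in $GO(T)$, note that every element of $\mathcal{S}$ has the form $x\mapsto Kx+v$ with $K\in\Z\setminus\{0\}$ and $v\in\Z^2$, and such maps send $O$ into $\Z^2$. So $O\in GO(T)$ would require $KT\in\Z^2$ for some nonzero integer $K$. When $\theta_3$ is irrational and $t\neq 0$ this fails, because $Kt$ and $Kt\theta_3$ cannot both be integers. This is why the statement must be understood with $k\neq 0$, consistent with the index set $\{\Phi_k\mid k\in\Z\setminus\{0\}\}$ of the homotheties in $\Gamma$.
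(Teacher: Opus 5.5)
Your proof is correct and follows the paper's argument. Both write the two membership conditions for $\ell_1$, divide by $\theta_1-\theta_3$ to get $t'=kt$, and conclude. You also make the grand-orbit witness explicit ($\alpha=\Phi_k$, $\beta=\mathrm{id}$), which the paper leaves implicit, and you are right that $k=0$ must be excluded: then $T'=O$, which in general is not in the grand orbit of $T$ (e.g.\ when $\theta_3\notin\Q$ and $t\neq 0$), so the paper's statement with $k\in\Z$ is slightly too broad.
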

\begin{proof}
    We have
    $$\begin{cases}
        \theta_1(t+m)=t\theta_3+n\\
        \theta_1(t'+km)=t'\theta_3+kn.
    \end{cases}$$
    Therefore,
    $$\begin{cases}
        (\theta_1-\theta_3)t=n-\theta_1m\\
        (\theta_1-\theta_3)t'=kn-k\theta_1m,
    \end{cases}$$
    so $t'=kt$ and $T, T'$ belong to the same grand orbit under the action of $\mathcal{S}$. 
\end{proof}

\begin{proof}[Proof of Theorem{\normalfont~\ref{infSaction}}]
    If there are different points under the action of $\mathcal{S}$, that is, there exist 
    \begin{align*}
        T_1=(t+m_1,t\theta_3+n_1)\in \ell_1,&\quad T_2=(t+m_2,t\theta_3+n_2)\in \ell_2,\\
        T_1'=(t'+m_1',t'\theta_3+n_1')\in \ell_1,&\quad T_2'=(t'+m_2',t'\theta_3+n_2')\in \ell_2
    \end{align*}
    such that 
    \begin{equation}\label{equhom}
        \frac{n_2+m_2\theta_2}{n_1+m_1\theta_1}=\frac{\theta_2-\theta_3}{\theta_1-\theta_3}=\frac{n_2'+m_2'\theta_2}{n_1'+m_1'\theta_1},
    \end{equation}
    then $\theta_1,\theta_2$ satisfy
    $$\frac{n_2+m_2\theta_2}{n_2'+m_2'\theta_2}=\frac{n_1+m_1\theta_1}{n_1'+m_1'\theta_1}$$
    and, as a consequence of Lemma~\ref{lemmahom}, the matrices 
    $$\begin{pmatrix}m_1 &n_1\\ m_1' &m_1'\end{pmatrix} \quad \text{and}\quad\begin{pmatrix}m_2 & n_2\\ m_2' &n_2'\end{pmatrix}$$
    are both invertible. Indeed, if there exist $p,q\in \Z\setminus\{0\}$ such that $p(m_1,n_1)=q(m_1',m_1')$, then $\Phi_p(T_1)=\Phi_q(T_1')$, that is, $T_1,T_1'$ belongs to the same grand orbit. 
    
    Then $\theta_1,\theta_2$ belong to the same orbit under the action of $\mathrm{PGL}_2(\Q)$, and by symmetry $\theta_3$ belongs to it, too. In particular, $\Q(\theta_1)=\Q(\theta_2)=\Q(\theta_3)$. Now we prove the converse statement.
    We have to prove that if $\theta_1,\theta_2,\theta_3$ belongs to the same orbit under the action of $\mathrm{PGL}_2(\Q)$ over $\mathbb{P}^1(\R)$, then there exist infinitely many 4-tuples $(m_1,n_1,m_2,n_2)$ pairwise linearly independent such that Equation~\eqref{equhom} holds. In other words, we suppose 
    $$\theta_1=\frac{a_1\theta_3+b_1}{c_1\theta_3+d_1}\quad\text{and} \quad\theta_2=\frac{a_2\theta_3+b_2}{c_2\theta_3+d_2}$$
    with $a_1,b_1,c_1,d_1,a_2,b_2,c_2,d_2\in\Z$ and we must find infinite 4-tuples not pairwise linearly dependent $(m_1,n_1,m_2,n_2)\in\Z^2$ satisfying
    $$\begin{cases}
        m_1a_1+n_1c_1=m_2a_2+n_2c_2\\
        m_1b_1+n_1d_1=m_2b_2+n_2d_2.
    \end{cases}$$
    The system has two equations and four variables, so its space of solution over $\Q$ has at least dimension two. Taking two independent solutions $w_1,w_2\in\Q^4$, we can clear denominators obtaining two integer solutions $\overline{w_1},\overline{w_2}\in\Z^4$. Then the set 
    $$\{\overline{w_1}+k\overline{w_2}\mid k\in\Z\}$$
    consists of infinitely many 4-tuples, not being pairwise linearly dependent.
\end{proof}

\section{Two lines and a curve}\label{Sectwolines}

We aim to classify all the triples $(\ell_1,\ell_2, \cC)$, where $\ell_1,\ell_2\subset \R^2$ are real lines and $\cC\subset \R^2$ is an irreducible real curve of degree at least 2, which have infinitely many triple intersections in the real torus. We will always suppose that $\ell_1,\ell_2$ do not belong to the same orbit for the action of $\Z^2$. Given a curve $\cC$, we define the following set:
\begin{align*}
    \mathcal{L}_\cC:=\{&(\ell_1,\ell_2) \mid \ell_1,\ell_2\text{ are real lines belonging to different $\Z^2$-orbits such that }\\
    &\cC,\ell_1,\ell_2 \text{ have infinitely many intersections on the real torus}\}.
\end{align*}

From Section~\ref{secproj}, we know that for any curve $\cC$, the set $\mathcal{L}_{\cC}$ does not contain pairs of lines with rational slope. Moreover, if $\cC$ is compact and $(\ell_1,\ell_2)$ belongs to $\mathcal{L}$, then $\ell_1,\ell_2$ have irrational slopes. 

In this section, we focus on determining whether the set $\mathcal{L}_{\cC}$ is empty for some families of curves. First, we prove that $\mathcal{L}_{\cC}$ is empty when $\cC$ is hyperbolic; then we construct examples showing that the degree alone does not obstruct infinitude; and finally, we address the case of conics. 

\begin{thm}\label{Twolinesandhyperbolic}
    Let $\ell_1,\ell_2\subset \R^2$ be two real lines that do not lie in the same $\Z^2$-orbit, and $\cC$ be an affine hyperbolic plane curve (see Definition~{\normalfont\ref{defhyp}}), all defined over a number field $K$. Then $\ell_1,\ell_2$ and $\cC$ have only finitely many points of triple intersection on the real torus. In particular, if $\cC$ is hyperbolic, then $\mathcal{L}_{\cC}$ is empty.
\end{thm}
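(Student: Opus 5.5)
Write $\ell_1: a_1x+b_1y=c_1$ and $\ell_2: a_2x+b_2y=c_2$ with coefficients in the number field $K$. The goal is to turn a triple intersection on the torus into a point of $\cC$ whose coordinates lie in a fixed finitely generated ring, and then apply Siegel's Theorem~\ref{SiegelThm}.

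\textbf{Step 1: reduction to integrality.} A triple intersection is a point $P=(x,y)\in\cC(\R)$ with $P+v_i\in\ell_i$ for some $v_i=(m_i,n_i)\in\Z^2$, $i=1,2$. This gives the linear system
$$a_1x+b_1y=c_1-a_1m_1-b_1n_1,\qquad a_2x+b_2y=c_2-a_2m_2-b_2n_2.$$

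\textbf{Case (i): $\ell_1,\ell_2$ not parallel.} The determinant $\Delta=a_1b_2-a_2b_1\neq0$, so Cramer's rule gives
$$x,y\in \Delta^{-1}\bigl(\langle 1,a_1,b_1,a_2,b_2,c_1,c_2\rangle\text{-linear combinations with integer coefficients}\bigr).$$
Hence $x,y$ lie in the ring $R=\cO_K[\Delta^{-1}]$, which is finitely generated over $\Z$. Replacing $\cC$ by $\cC_K$, which is hyperbolic because hyperbolicity is geometric, Theorem~\ref{SiegelThm} shows that $\cC(R)$ is finite. So only finitely many points $P\in\cC$ arise, and a fortiori finitely many points on the torus.

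\textbf{Case (ii): $\ell_1,\ell_2$ parallel.} Normalise so that both lines have the form $\alpha x+\beta y=\gamma_i$. A common point on the torus requires $\gamma_1-\gamma_2\in\Z\alpha+\Z\beta$, as computed in the degenerate-configuration subsection, and in that case the two lines coincide on the torus. This contradicts the hypothesis that they lie in different $\Z^2$-orbits. So there are no triple intersections at all, and the statement holds vacuously.

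\textbf{Step 2: the set $\mathcal{L}_\cC$.} The final claim, that $\mathcal{L}_\cC$ is empty for hyperbolic $\cC$, needs lines with arbitrary real coefficients, so it relies on the specialization argument of Section~\ref{Secspecialization}. Alternatively one applies Theorem~\ref{SiegelThm} directly over the finitely generated field $K=\Q(\text{coefficients of }\ell_1,\ell_2,\cC)$. Siegel's theorem as stated allows $K$ finitely generated over $\Q$ and $R$ finitely generated over $\Z$, so the same argument works verbatim with $R=\Z[\text{coefficients},\Delta^{-1}]$.

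\textbf{Main obstacle.} The main point to check is that finitely many points $P$ in $\R^2$ give finitely many points on the torus; this is immediate. The remaining delicate step is ensuring that hyperbolicity is preserved when $\cC$ is viewed over $K$, and that the coordinates genuinely lie in a finitely generated ring. Both follow from the Cramer's rule computation above.
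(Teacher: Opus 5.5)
Your proof is correct and follows the same route as the paper. The paper likewise observes that every intersection of a translate of $\ell_1$ with a translate of $\ell_2$ is $S$-integral for one fixed finite set of places $S$; this is your ring $\cO_K[\Delta^{-1}]$, which is right once you scale the line equations to have coefficients in $\cO_K$. Triple intersections are therefore $S$-integral points of $\cC$, and Siegel's Theorem~\ref{SiegelThm} finishes the argument. Your explicit parallel case and your use of Siegel's theorem over the finitely generated ring $\Z[\text{coefficients},\Delta^{-1}]$ for the claim about $\mathcal{L}_\cC$ only make explicit what the paper leaves implicit.
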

\begin{proof}
    We observe that the intersection of two lines defined over the same field $K$ is a \mbox{$K$-rational} point, and in particular, there exists a finite set of places $S$ such that all the intersections between a line in the $\Z^2$-orbit of $\ell_1$ and a line in the $\Z^2$-orbit of $\ell_2$ are $S$-integral points. Therefore, a triple intersection is the image of an $S$-integral point of $\cC$ on the real torus. By applying Siegel's Theorem~\ref{SiegelThm} on the curve $\cC$, we conclude that there are only finitely many triple intersections on the real torus.
\end{proof}

From the previous theorem, we have that if $\mathcal{L}_\cC$ is nonempty, then $\cC$ has genus zero and its completion in $\pr^2$ has at most two points at infinity. With the following Examples~\ref{onepoint},~\ref{twopoints}, we show that for any $d\geq 2$ there are curves of degree $d$ with one point at infinity, and curves of degree $d$ with two points at infinity satisfying $\mathcal{L}_\cC\neq \emptyset$. Therefore, the degree of $\cC$ does not provide any obstruction.

\begin{es}[Curves with one point at infinity]\label{onepoint}
Let $d, k$ be positive integers satisfying $d\geq 2$ and $\sqrt[d]{k}\notin \Z$. We consider the curves
$$\ell_1: x=0\quad \ell_2:y=\sqrt[d]{k}x\quad \cC: x=y^{d},$$
and the set of points $P_n=(0,\sqrt[d]{k}n)$ with $n\in\Z$. We have 
$$P_n\in \ell_1,\quad P_n+(n,0)\in \ell_2\quad\text{and}\quad P_n+(kn^d,0)\in \cC.$$
Then $P_n$ is a point of triple intersection on the torus for each $n\in\Z$. These points are pairwise distinct on the torus since $\sqrt[d]{k}\notin \Z$. Therefore, $(\ell_1,\ell_2)\in \mathcal{L}_\mathcal{C}$.
\end{es}

\begin{oss}
   In the previous example, there are no other triple intersections. To determine them, we have to solve the following system
\begin{equation*}
    \begin{cases}
        x=n\\
        y=\sqrt[d]{k}x\\
        x+m=(y+h)^d
    \end{cases}
\end{equation*}
for each $n,m,h\in\Z$. This leads to the equation
$$n+m=(\sqrt[d]{k}n+h)^d=kn^d+d\sqrt[d]{k^{d-1}}n^{d-1}h+\sum_{i=0}^{d-2}\binom{d}{i}h^{d-i}\sqrt[d]{k^{i}}n^{i}.$$
Since $\sqrt[d]{k^{d-1}}\notin\Q$, we have $n^{d-1}h=0$. If $n=0$, we have $(x,y)=(0,0)$. If $h=0$ we have $m=kn^d-n$, which leads to the set of points $\{P_n\mid n\in\Z\}$. 

We observe $P_n=\Phi_n(0,\sqrt[d]{k})$, where $\Phi_n$ is the homothety of center $(0,0)$ and ratio $n$. Then, all the triple intersections belong to the $\mathcal{S}$-orbit of $P_1$.
\end{oss}

\begin{es}[Curves with two points at infinity]\label{twopoints}
    Let $\theta\in\R$ be a quadratic integer and $d>0$ be an integer. We consider the curves
    $$\ell_1:x=\theta y\quad \ell_2: x=-\theta y\quad \cC: (x+\theta y)^d(\theta y-x)=1.$$
    The intersections between the lines obtained by translating $\ell_1$ and $\ell_2$ by integral vectors are solutions of the following system:
    \begin{equation*}
        \begin{cases}
           x=\theta (y+m)+n\\ 
           x=-\theta (y+h)+k,
        \end{cases}
    \end{equation*}
    that is,
    \begin{equation*}
        \begin{cases} 
           x+\theta y=-\theta h+k\\
           \theta y-x=-\theta m-n
        \end{cases}
    \end{equation*}
    whose solutions are 
    $$(x,y)=\left(\frac{\theta(m-h)+n+k}{2},\frac{\theta(m+h)+k-n}{2\theta}\right).$$
    Substituting in the equation of $\cC$, we get 
    \begin{equation}\label{integers}
        (-\theta h+k)^d(-\theta m-n)=1.
    \end{equation}
    By Dirichlet's unit theorem, the group of units of the ring of integers of $\Q(\theta)$ has rank one. Then, there exist $a_1,b_1\in\Z$ such that the algebraic number 
    $$a_t+b_t\theta=(a_1+b_1\theta)^t$$
    is a unit for each $t\in\Z$.
    
    If we set $h=-b_t, k=a_t, m=-b_{-dt}=b_{dt}, n=-a_{-dt}=-a_{dt}$, we obtain a solution of Equation~\ref{integers}, which corresponds to the point 
    $$(x_t,y_t)=\left(\frac{\theta(b_{dt}+b_{t})+a_{t}-a_{dt}}{2},\frac{\theta(b_{dt}-b_{t})+a_t+a_{dt}}{2\theta}\right).$$
    The set $\{(x_t,y_t)\mid t\in\Z\}$ corresponds to a set of infinitely many distinct points on the real torus since the sequence $t\mapsto b_{dt}+b_{t}$ is strictly increasing. Therefore, $(\ell_1,\ell_2)\in\mathcal{L}_\mathcal{C}$.
\end{es}

\subsection{Two lines and a non-compact conic}

Now we prove that if $\cC$ is an irreducible parabola or an irreducible hyperbola defined over $\Q$, then $\mathcal{L}_\cC$ is not empty. In particular, if $\cC$ is an irreducible non-compact conic, then for all but at most two rational numbers $q$ (depending on $\cC$), $\mathcal{L}_\cC$ contains infinitely many pairs $(\ell_1,\ell_2)$ where $\ell_1$ has slope equal to $q$. We state the main result separately for parabolas and hyperbolas.

\begin{thm}\label{mainparabolas}
    Let $\cC$ be an irreducible parabola defined by the equation
    $$\cC:(ax+by)^2+cx+dy+e=0\quad a,b,c,d,e\in\Z.$$
    Let $\ell_1$ be the line defined by the equation
    $$\ell_1: ux+vy=0\quad u,v\in\Z.$$
    Suppose $(a:b)\neq (u:v)$ in $\pr^1(\Q)$. Then, there exist infinitely many real lines $\ell_2$ such that $\cC,\ell_1,\ell_2$ have infinitely many triple intersections on the real torus.
\end{thm}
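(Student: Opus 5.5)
The plan is to reduce to a normal form in which $\ell_1$ is the line $x=0$, and then build $\ell_2$ by hand through an infinite family of points of $\cC$ lying on integral translates of $\ell_1$.

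\textbf{Normalization.} We may assume $\gcd(u,v)=1$. As in the proof of Theorem~\ref{compactintersections}, we apply a matrix $T\in\mathrm{SL}_2(\Z)$ sending $\ell_1$ to $x=0$. Since $T$ preserves $\Z^2$, it does not change the number of triple intersections on the torus. After this change the parabola still has integer coefficients and reads $(a'x+b'y)^2+c'x+d'y+e'=0$. The hypothesis $(a:b)\neq(u:v)$ translates into $b'\neq 0$, since $(a:b)=(u:v)$ is exactly $b'=0$. The integral translates of $\ell_1$ are the lines $x=n$ with $n\in\Z$. For each such line we solve the quadratic in $y$:
$$y=\frac{-(2a'b'n+d')\pm\sqrt{D(n)}}{2b'^2},\qquad D(n)=\alpha n+\beta,$$
where $\alpha,\beta\in\Z$. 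A direct computation shows that the $n^2$ terms cancel. Moreover, $\alpha=0$ would force $\cC$ to split as a union of two parallel lines, so irreducibility gives $\alpha\neq0$.

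\textbf{Constructing the points.} We fix a non-square positive integer $s$ with $s\equiv\beta\pmod{\alpha}$, for instance $s=\alpha t+\beta$ for a suitable $t$. For every $m$ in a suitable arithmetic progression we then set $n_m=(sm^2-\beta)/\alpha\in\Z$, so that $D(n_m)=sm^2$. This gives real points $P_m=(n_m,\,p_m+q_m\sqrt s)\in\cC$, where
$$q_m=\frac{m}{2b'^2}$$
is linear in $m$ and $p_m$ is a rational number with bounded denominator, linear in $n_m$. Each $P_m$ lies on $\ell_1+(n_m,0)$. The points $P_m$ are pairwise distinct on the torus: two of them differ by an element of $\Z^2$ only if $(q_m-q_{m'})\sqrt s\in\Q$, which forces $q_m=q_{m'}$, i.e.\ $m=m'$.

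\textbf{Choosing $\ell_2$.} We look for $\ell_2: y=(\sigma+\rho\sqrt s)x+(\tau'+\tau''\sqrt s)$ with $\sigma,\rho,\tau',\tau''\in\Q$ and $\rho\neq0$, so that $\ell_2$ has irrational slope. We need integers $k_m,j_m$ such that $P_m+(k_m,j_m)\in\ell_2$. Since $1$ and $\sqrt s$ are linearly independent over $\Q$, this splits into two conditions:
$$q_m=\rho(n_m+k_m)+\tau'',\qquad p_m+j_m=\sigma(n_m+k_m)+\tau'.$$
The first condition determines $k_m=(q_m-\tau'')/\rho-n_m$. This is an integer for all $m$ in a suitable arithmetic progression, say with $\rho=1/(2b'^2R)$ for an integer $R$ and $\tau''=0$. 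The second condition then determines $j_m$, and we must check that it is integral. Both $p_m$ and $\sigma(n_m+k_m)$ are polynomial expressions in $m$ with rational coefficients of bounded denominator. Hence, after choosing $\sigma$ with controlled denominator and intersecting with a further congruence class of $m$, $j_m$ is integral for infinitely many $m$.

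\textbf{Conclusion and main obstacle.} Every such $m$ gives a triple intersection of $\cC,\ell_1,\ell_2$ on the torus, and these intersections are pairwise distinct by the argument in the second paragraph. Varying $R$ (equivalently $\rho$) gives infinitely many distinct lines $\ell_2$, none in the $\Z^2$-orbit of $\ell_1$ because the slopes are irrational. I expect the main difficulty to be the bookkeeping of congruence conditions: we need a single residue class of $m$ for which $n_m$, $k_m$ and $j_m$ are simultaneously integers. The approach is to take all rational denominators that appear, let $N$ be their least common multiple, and restrict to $m\equiv m_0\pmod N$ for an $m_0$ satisfying the first congruence. The quadratic dependence of $n_m$ on $m$ is harmless in this step, since polynomials with rational coefficients take values in a fixed lattice on residue classes modulo $N$.
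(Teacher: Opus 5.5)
Your argument is correct, but it takes a different and more elementary route than the paper.

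\textbf{The paper's route.} The paper normalizes $\ell_1$ to $y=0$ and takes $\ell_2\colon x=\theta y+q$, where $\theta$ is an irrational root of $P(\theta,n_0)=0$ (Lemma~\ref{existence}). Separating rational and irrational parts gives an auxiliary conic $\cD$ over $\Q$ with an integral point and one point at infinity. The Alvanos--Bilu--Poulakis criterion (Theorem~\ref{ABPthm}) then yields infinitely many integral points on $\cD$, and $q=N/(2a)$ is fixed by pigeonhole.

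\textbf{Your route.} You use the fact that once $\ell_1$ is $x=0$ and $b'\neq 0$, the discriminant of $\cC\cap\{x=n\}$ is linear in $n$. The needed points are then parametrized explicitly by $n_m=(sm^2-\beta)/\alpha$. This avoids both Hilbert irreducibility and the ABP theorem. It also produces explicit lines, with slopes $\sigma+\sqrt s/(2b'^2R)$, and triple intersections along an explicit arithmetic progression of $m$.

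\textbf{What the paper's route buys.} It is uniform: Proposition~\ref{Mainconics} treats parabolas and hyperbolas together. For a hyperbola the discriminant is quadratic in $n$, so $D(n)=sm^2$ becomes a Pell-type conic. Your explicit parametrization is then unavailable, and something like ABP is genuinely needed.

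\textbf{One detail to make explicit.} Restricting $m$ to a residue class only makes $j_m$ constant modulo $1$. You still have to spend a free rational parameter to make that constant zero; this is the analogue of the paper's $q=N/(2a)$. For example, take $\sigma=0$, $\tau''=0$, $\tau'=p_1$, and $m>0$ with $m\equiv 1\pmod{2\alpha b'^2}$. Then $n_m\in\Z$ and $k_m=mR-n_m\in\Z$. Moreover
\[
j_m=p_1-p_m=\frac{a'(n_m-n_1)}{b'},\qquad n_m-n_1=\frac{s(m^2-1)}{\alpha},
\]
and $n_m-n_1$ is divisible by $b'^2$, so $j_m\in\Z$.
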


\begin{thm}\label{mainhyperbolas}
    Let $\cC$ be an irreducible hyperbola defined by the equation
    $$\cC:ax^2+bxy+cy^2+dx+ey+f=0\quad a,b,c,d,e,f\in\Z.$$
    Let $\ell_1$ be the line defined by the equation
    $$\ell_1: ux+vy=0\quad u,v\in\Z.$$
    If $ax^2+bxy+cy^2=(q_1x+q_2y)(q_3x+q_4y)$ with $q_1,q_2,q_3,q_4\in \Q$, i.e., the points at infinity of $\cC$ are defined over $\Q$, suppose $(u:v)\in \pr^1(\Q)\setminus\{(q_1:q_2),(q_3:q_4)\}$. Then, there exist infinitely many real lines $\ell_2$ such that $\cC,\ell_1,\ell_2$ have infinitely many triple intersections on the real torus.
\end{thm}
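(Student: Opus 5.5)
We will reduce the statement to a generalized Pell equation and then choose $\ell_2$ so that all the resulting intersection points of $\cC$ with translates of $\ell_1$ land on translates of $\ell_2$.

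\textbf{Normalization.} As in the proof of Theorem~\ref{compactintersections}, after dividing $(u,v)$ by their gcd we apply a matrix of $\mathrm{SL}_2(\Z)$ sending $\ell_1$ to the line $y=0$. The hyperbola stays defined over $\Z$ and its asymptotic directions are transformed accordingly. The hypothesis $(u:v)\notin\{(q_1:q_2),(q_3:q_4)\}$ becomes the condition that $(1:0)$ is not a point at infinity of $\cC$, i.e.\ $a\neq 0$ in the new equation $ax^2+bxy+cy^2+dx+ey+f=0$. This hypothesis is automatic if the asymptotic directions are irrational. The $\Z^2$-translates of $\ell_1$ are the lines $y=k$ with $k\in\Z$. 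On $y=k$ the curve $\cC$ has the points
$$x=\frac{-(bk+d)\pm\sqrt{D(k)}}{2a},\qquad D(k)=\Delta k^2+2(bd-2ae)k+d^2-4af,\quad \Delta=b^2-4ac>0.$$

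\textbf{Pell step.} Pick $k_0\in\Z$ with $D(k_0)>0$ such that $\Delta D(k_0)$ is not a perfect square. Let $N$ be the squarefree part of $D(k_0)$. Completing the square, the equation $D(k)=Nw^2$ becomes $(\Delta k+\beta)^2-\Delta N w^2=\gamma$ for suitable integers $\beta,\gamma$. Since $\Delta N$ is not a square, this is a generalized Pell equation with the known solution $(k_0,w_0)$. Multiplying by powers of a unit of $\Z[\sqrt{\Delta N}]$ congruent to $1$ modulo a suitable integer $M$ (to preserve the congruence $\Delta k+\beta\equiv\beta \pmod\Delta$) produces infinitely many integer solutions $(k,w)$ with $|k|\to\infty$. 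For each of them we get a real point $P_k=(p_k+q_k\sqrt N,\,k)\in\cC$ with $p_k,q_k\in\frac{1}{2a}\Z$ and $|q_k|\to\infty$.

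\textbf{Choice of $\ell_2$.} Passing to an infinite subsequence by pigeonhole, we may assume that the residues of $p_k$ and $q_k$ modulo $\Z$ are constant, say equal to $p$ and $q$. For each integer $j\neq 0$, let
$$\ell_2^{(j)}\colon\ x-j\sqrt N\,y=p+q\sqrt N .$$
Then $x_k-j\sqrt N\,k-(p+q\sqrt N)\in\Z+\sqrt N\,\Z$. So there is $(m,n)\in\Z^2$ with $P_k+(m,n)\in\ell_2^{(j)}$: the $\Z$ part is absorbed by $m$, and a shift $n$ in $y$ changes the left side by $-jn\sqrt N$. Here we must first arrange, again by pigeonhole, that $q_k-q$ is divisible by $j$, or simply work with $j=\pm1$ together with a rescaling of $N$ by squares.

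\textbf{Distinctness and conclusion.} Since $P_k\in\cC$ lies on a translate of $\ell_1$ and a translate of $\ell_2^{(j)}$, each $P_k$ gives a triple intersection. These are distinct on the torus: the $P_k$ all lie on lines $y\equiv 0$, and their first coordinates $p_k+q_k\sqrt N$ are pairwise incongruent modulo $1$ because $\sqrt N\notin\Q$ and the $q_k$ are distinct. The lines $\ell_2^{(j)}$ have pairwise different irrational slopes, so they lie in different $\Z^2$-orbits from each other and from $\ell_1$. Alternatively, varying $N$ over the squarefree parts coming from different admissible $k_0$ also yields infinitely many $\ell_2$.

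\textbf{Main obstacle.} I expect the Pell step to be the delicate one. It requires choosing $k_0$ so that $\Delta N$ is not a square, which is the point where the rational-asymptote case could degenerate. It also requires the congruence control ensuring that the solutions of $(\Delta k+\beta)^2-\Delta Nw^2=\gamma$ really come from integers $k$. I would handle the congruence by restricting to units in the congruence subgroup $\{\varepsilon\equiv1 \bmod M\}$, which has finite index in the unit group. For the first issue I would argue that $D$ is a non-degenerate quadratic polynomial, so $\Delta D(k)$ cannot be a square for all $k$.
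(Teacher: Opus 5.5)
The overall route matches the paper, but your choice of $k_0$ is missing one condition, and without it the distinctness step can fail.

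\textbf{The gap.} Besides $D(k_0)>0$ and $\Delta D(k_0)$ not a square, you also need $D(k_0)$ itself not to be a perfect square, i.e.\ $N\neq 1$. When $\Delta$ is not a square, your stated conditions allow $D(k_0)=s^2$. In that case every $P_k$ has $x_k\in\frac{1}{2a}\Z$, so all the $P_k$ fall into finitely many $\Z^2$-orbits. Your distinctness step, which invokes $\sqrt N\notin\Q$, then breaks down.

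Relatedly, the remark that $\Delta D(k)$ cannot be a square for all $k$ does not give a usable $k_0$: whenever $D(k)<0$, $\Delta D(k)$ is trivially a non-square. What you need is a $k_0$ with $D(k_0)>0$ and both $D(k_0)$ and $\Delta D(k_0)$ non-squares. This is exactly the content of Lemma~\ref{existence}, which applies Hilbert irreducibility to $U^2-D(T)/\Delta$ and $V^2-D(T)$. Alternatively, the integers $k$ making $D(k)$ or $\Delta D(k)$ a square are sparse, while $D(k)>0$ for all large $|k|$. With this condition added, your proof is complete.

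\textbf{Comparison with the paper.} Apart from this, your argument is essentially the paper's. You use the same $\mathrm{SL}_2(\Z)$ normalization to $\ell_1\colon y=0$ with $a\neq 0$ and the same kind of auxiliary $k_0$. You also land on the same Pell-type conic: multiplying Equation~\eqref{newconic} by $4a$ turns it into $D(n)=D(n_0)(n+k)^2$. So the triple intersections produced in Proposition~\ref{Mainconics} are points of exactly your form $\bigl(\frac{-(bn+d)\pm (n+k)\sqrt{D(n_0)}}{2a},\,n\bigr)$.

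The differences are cosmetic:
\begin{itemize}
\item You obtain infinitely many integral points by hand, from a finite-index congruence subgroup of the norm-one units of $\Z[\sqrt{\Delta N}]$. The paper instead cites Theorem~\ref{ABPthm}.
\item You take $\ell_2$ with direction $(\sqrt N,1)$ and fix the intercept by pigeonholing residues in $\frac{1}{2a}\Z/\Z$. The paper uses the irrational root $\theta$ itself as the slope and pigeonholes modulo $2a$.
\end{itemize}
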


First, we need a technical lemma that ensures the existence of suitable slopes for the lines. We deduce it from Hilbert's Irreducibility Theorem.

\begin{thm}[Hilbert's Irreducibility Theorem {\cite[Theorem 46]{Schinzel2000}}]\label{Hilbert}
    Let 
    $$f_i(X_1,\dots,X_r,Y_1,\dots,Y_s)\in \Q[X_1,\dots,X_r,Y_1,\dots,Y_s]$$
    be irreducible polynomials for $1\leq i\leq n$. There exist infinitely many $r$-tuple of integral numbers $(a_1,\dots,a_r)$ such that $f_i(a_1,\dots,a_r,Y_1,\dots,Y_s)$ is irreducible in $\Q[Y_1,\dots,Y_s]$ for \mbox{$1\leq i\leq n$.}
\end{thm}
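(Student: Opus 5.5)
The plan is the classical one: reduce to the case $r=s=1$, and then show by an analytic counting argument that the integers $a$ for which $f_i(a,Y)$ becomes reducible form a set of density zero in $\Z$. Since a finite union of density-zero sets still has density zero, this handles all $n$ polynomials at once and leaves infinitely many good $a$.

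\emph{Reduction in the $Y$-variables.} Let $D$ exceed every partial degree of every $f_i$. Apply the Kronecker substitution $S\colon Y_j\mapsto Y^{D^{j-1}}$. This map is injective on monomials of partial degree $<D$. Hence if $g(Y_1,\dots,Y_s)=g_1g_2$ is a nontrivial factorization, then $S(g)=S(g_1)S(g_2)$ is a factorization into factors of one of finitely many ``Kronecker shapes''. Conversely, any factor of $S(g)$ can be pulled back to $Y_1,\dots,Y_s$ with degrees $<D$.

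So reducibility of $f_i(a,Y_1,\dots,Y_s)$ is equivalent to $S(f_i)(a,Y)$ admitting one of finitely many prescribed factor patterns. Each pattern splits off a factor of bounded degree. I therefore expect to reduce to the following claim about finitely many polynomials $F(X,Y)$ irreducible in $\Q(X)[Y]$: avoid, for each $F$, every nontrivial monic factor of the specialization whose coefficients come from a fixed finite list of candidate patterns.

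\emph{Reduction in the $X$-variables.} This goes by induction on $r$. Substitute $X_r\mapsto$ a large integer, or again use a Kronecker substitution $X_j\mapsto X^{D^{j-1}}$ combined with a generic shift. This keeps irreducibility over $\Q(X)$, and in the end one only needs the one-variable statement with $X$ ranging over $\Z$. I would arrange this as in Schinzel, treating $X_1,\dots,X_{r-1}$ as part of the $Y$'s for the inductive step.

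\emph{Core case $r=s=1$.} Let $F(X,Y)\in\Q[X,Y]$ be irreducible of degree $m$ in $Y$, with leading coefficient $c(X)$. Discard the finitely many $a$ with $c(a)=0$. By Newton's Theorem, the roots of $F$ in $Y$ are Puiseux series $y_1,\dots,y_m$ in $X^{-1/e}$, convergent for $|X|$ large. If $F(a,Y)$ factors over $\Q$, then for some proper nonempty $I\subset\{1,\dots,m\}$ all elementary symmetric functions $\sigma_k(y_i(a):i\in I)$ are rational numbers. After multiplying by $c(a)$ and a fixed integer, they become integers by Gauss's lemma.

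Each such $g_{I,k}(X)=c(X)\sigma_k(y_i:i\in I)$ is a Puiseux series. Since $F$ is irreducible over $\Q(X)$, for every proper $I$ at least one $g_{I,k}$ is \emph{not} a polynomial in $X$ with rational coefficients. If it were a polynomial for every $k$, then $\prod_{i\in I}(Y-y_i)$ would be a factor in $\Q(X)[Y]$. Here one also has to handle the case of real versus rational coefficients, where Galois conjugation over $\Q$ forces rationality.

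\emph{Main obstacle: the counting lemma.} The key step is this: if $g$ is a Puiseux series in $X^{-1/e}$ convergent near $\infty$ and not a polynomial, then
\[
\#\{a\in\Z,\ |a|\le N : g(a)\in\Z\}=o(N).
\]
I would prove it with finite differences. Write $g=p+h$, where $p$ is the polynomial part and $h$ has leading exponent $\lambda\notin\Z_{\ge0}$. Take $k>\deg p$ and $k>\lambda$. Then the $k$-th difference $\Delta^k g(a)=\Delta^k h(a)$ is nonzero and of size about $|a|^{\lambda-k}\to0$. If $g$ were integral at $k+1$ consecutive integers, this difference would be an integer of absolute value $<1$, hence zero, a contradiction for $|a|$ large.

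This only excludes long runs of consecutive integers, which is not yet density zero. To get density zero, I would refine the argument with the $\lambda$-scale dichotomy, i.e., Dörge's argument. Integer values at $k+1$ points $a_0<\dots<a_k$ in a window of length $L$ force a divided difference that is either zero or at least roughly $L^{-k(k+1)/2}$. Comparing this with the size of $h^{(k)}$ bounds the number of admissible points per window by $N^{1-\delta}$. I expect this quantitative step to be the main difficulty. The union over the finitely many pairs $(I,k)$ and over $i=1,\dots,n$ then still has size $o(N)$, so infinitely many integers $a$ remain for which all specializations are irreducible.
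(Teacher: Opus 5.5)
The paper does not prove this statement; it quotes it from Schinzel's book, so there is no internal argument to compare with. Your outline is the classical D\"orge-type analytic proof, and its core is sound. For $F$ irreducible in $\Q(X)[Y]$ and large $a>0$, Gauss's lemma makes $c(a)\sigma_k(y_i(a):i\in I)$ integral. Your divided-difference argument then gives density zero: at $k+1$ admissible points of $[N,2N]$ the divided difference is a nonzero rational with denominator at most $L^{k(k+1)/2}$ and size $\ll N^{\lambda-k}$. So such points span a window of length $L\gg N^{\beta}$ with $\beta=2(k-\lambda)/(k(k+1))$, and $[N,2N]$ contains only $O(N^{1-\beta})$ of them (at most $k$ per short window, not $N^{1-\delta}$ per window). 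Apply the mean value theorem to real and imaginary parts separately. The problems are in the two reductions.

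\textbf{The Kronecker step in $Y$.} It does not reduce to your core case, because $S(f_i)$ is in general reducible over $\Q(X)$ even though $f_i$ is irreducible: for $f=Y_2-Y_1$ and $D=2$ one gets $S(f)=Y(Y-1)$. If $I$ is a union of root sets of the $\Q(X)$-irreducible factors $p_j$ of $S(f_i)$, the symmetric functions over $I$ are rational for \emph{every} $a$. No counting lemma can exclude such $I$, so a separate argument is needed. The standard one runs as follows. For each nonempty proper set $J$ of factors, let $h_J,h_{J^c}$ be the pull-backs, of partial degrees $<D$, of $\prod_{j\in J}p_j$ and $\prod_{j\notin J}p_j$. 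Then $S(h_Jh_{J^c})$ is a constant multiple of $S(f_i)$, while $h_Jh_{J^c}$ is not a constant multiple of $f_i$, by irreducibility. Hence $h_Jh_{J^c}$ contains a monomial with some exponent $\ge D$ and nonzero coefficient $e_J(X)$. Now suppose every $p_j(a,Y)$ is irreducible of the generic degree and every $e_J(a)\ne0$. A factorization $f_i(a,Y)=g_1g_2$ would make $g_1,g_2$ proportional to $h_J(a,Y),h_{J^c}(a,Y)$ for some $J$, by injectivity of $S$ on partial degrees $<D$. But $e_J(a)\ne 0$ forbids this. The argument discards only finitely many $a$, so density zero survives.

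\textbf{The reduction in $X$.} Substituting an arbitrary large integer for $X_r$ does not keep irreducibility ($Y^2-X_r$ at squares). Neither does a Kronecker substitution in the $X_j$: the irreducible $Y^2-X_2$ becomes $Y^2-X^D$ for even $D$. Choosing an integral shift that repairs this is again a Hilbert irreducibility problem in $r$ variables. Only the scheme in your last sentence works. Apply the case $r=1$ simultaneously to all $f_i$, with $Y$-variables $X_1,\dots,X_{r-1},Y_1,\dots,Y_s$. Discard the finitely many $a_r$ that kill every coefficient of positive $Y$-degree, and induct.

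Finally, the statement itself, and your core case where $m\ge1$ is tacit, needs each $f_i$ to have positive degree in $Y$. Otherwise $f_1=X_1$ is a counterexample.
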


\begin{lemma}\label{existence}
    Let 
    $$P(x,y)=ax^2+bxy+cy^2+dx+ey+f\in\Z[x,y]$$
    be a polynomial irreducible over $\overline{\Q}$. Suppose $a\neq 0$ and $b^2-4ac\geq 0$. Then, there exist infinitely many pairs $(\theta,n)$ with $\theta\in\R\setminus\Q$ and $n\in\Z$ such that $f(\theta,n)=0$. Moreover, if $b^2-4ac> 0$, we can choose $n$ such that 
    $$\frac{(bn+d)^2-4a(cn^2+en+f)}{b^2-4ac}$$
    is not the square of a rational number.
\end{lemma}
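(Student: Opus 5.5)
Note that the statement writes $f(\theta,n)=0$, but $f$ is the constant term; I read this as $P(\theta,n)=0$. The plan is to view $P(x,n)$, for fixed $n\in\Z$, as a quadratic in $x$ with leading coefficient $a\neq 0$:
$$P(x,n)=ax^2+(bn+d)x+(cn^2+en+f).$$
Its discriminant is
$$\Delta(n)=(bn+d)^2-4a(cn^2+en+f)=(b^2-4ac)n^2+2(bd-2ae)n+(d^2-4af).$$
A root $\theta=\frac{-(bn+d)\pm\sqrt{\Delta(n)}}{2a}$ is real and irrational exactly when $\Delta(n)>0$ and $\Delta(n)$ is not a perfect square in $\Z$. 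So everything reduces to producing infinitely many integers $n$ with $\Delta(n)$ positive and not a square. Each such $n$ gives an admissible pair $(\theta,n)$, and distinct $n$ give distinct pairs.

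I would treat positivity by cases. If $b^2-4ac>0$, then $\Delta(n)\to+\infty$, so $\Delta(n)>0$ for all $|n|$ large. If $b^2-4ac=0$, then $\Delta$ has degree at most one. It cannot be constant: if $bd-2ae=0$ as well, one checks that $4aP$ becomes $(2ax+by+d)^2-\text{const}$, which factors over $\overline{\Q}$ (or is a square), contradicting irreducibility. Hence $\Delta$ is a nonconstant linear polynomial, and $\Delta(n)>0$ for infinitely many $n$ of the appropriate sign.

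For non-squareness I would apply Hilbert's Irreducibility Theorem~\ref{Hilbert} to $F(X,Y)=Y^2-\Delta(X)\in\Q[X,Y]$. Irreducibility of $F$ requires $\Delta$ not to be a square in $\Q[X]$. In the linear case this is automatic. In the quadratic case, $\Delta$ being a square times a constant would make $\Delta$ a perfect square in $\overline{\Q}[X]$, and one checks this makes $P$ reducible over $\overline{\Q}$: completing the square gives $4aP=(2ax+by+d)^2-\Delta(y)$, and a square $\Delta$ yields a factorization as a difference of squares. This is the identity underpinning the whole argument.

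Theorem~\ref{Hilbert} then gives infinitely many integers $n$ with $Y^2-\Delta(n)$ irreducible, i.e., $\Delta(n)$ not a rational square. The main obstacle is ensuring these $n$ also have $\Delta(n)>0$. To handle it, I would substitute $X=N^2+M$, or use the standard refinement that Hilbert sets contain infinitely many elements of any sufficiently long arithmetic progression or large interval. Alternatively, I would note that the exceptional set where $\Delta(n)$ is a square is thin, so it cannot contain all large $n$ of the right sign: a quadratic or linear polynomial takes square values on a set of density zero.

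For the second claim, assume $b^2-4ac>0$ and apply the same reasoning to $G(X,Y)=(b^2-4ac)Y^2-\Delta(X)$. The quantity $\Delta(n)/(b^2-4ac)$ is a rational square iff $G(n,Y)$ has a rational root. $G$ is irreducible unless $\Delta/(b^2-4ac)$ is a square in $\Q[X]$, which again would make $\Delta$ a square in $\overline{\Q}[X]$ and is excluded as above. Applying Theorem~\ref{Hilbert} simultaneously to $F$ and $G$ yields infinitely many $n$, all large since the Hilbert set is infinite and positivity holds for $|n|\gg0$. For these $n$, both $\theta$ is irrational and the required quotient is not a rational square.
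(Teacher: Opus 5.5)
Your proposal is correct and follows essentially the paper's route. You reduce to finding infinitely many $n$ with $\Delta(n)$ positive and not a square, and you use irreducibility to see that $\Delta$ is neither constant nor a square. Your identity $4aP=(2ax+by+d)^2-\Delta(y)$ encodes the same condition as the paper's nonvanishing $3\times 3$ determinant. For $b^2-4ac>0$ you then apply Hilbert's Irreducibility Theorem simultaneously to $Y^2-\Delta(X)$ and $(b^2-4ac)Y^2-\Delta(X)$, exactly as the paper does.

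The only real difference is the case $b^2-4ac=0$. There the paper sidesteps your positivity concern: $\Delta(n)$ is an arithmetic progression with nonzero difference, so only finitely many pairs of consecutive terms can both be squares. This is your density alternative in its simplest form, and no Hilbert irreducibility is needed in that case.
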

\begin{proof}
    Since $P(x,y)$ is irreducible, we have
    $$\det\begin{pmatrix}
        a & b/2 & d/2\\
        b/2 & c & e/2\\
        d/2 & e/2 & f
    \end{pmatrix}=-\frac{1}{4}(ae^2-ebd-4afc+cd^2+fb^2)\neq 0.$$
    Since $a\neq 0$, the condition is equivalent to 
    \begin{equation}\label{det}
        4a(ae^2-ebd-4afc+cd^2+fb^2)-b^2d^2+b^2d^2=(4ac-b^2)(4af-d^2)-(2ae-bd)^2\neq 0  
    \end{equation}
    Let $n$ be an integer. The equation
    $$P(x,n)=ax^2+bxn+cn^2+dx+en+f=0$$
    has two real, irrational solutions if and only if its discriminant
    $$\Delta_n := (bn+d)^2-4a(cn^2+en+f)=(b^2-4ac)n^2+(2bd-4ae)n+d^2-4af$$
    is a non-square positive integer. 

    Suppose $b^2-4ac=0$. Then, by Equation~\eqref{det}, $2bd-4ae\neq 0$. It follows that $(\Delta_n)_{n\in\Z}$ is an arithmetic progression, so there exist infinitely many $n$ such that $\Delta_{n}$ is a non-square positive integer. Indeed, there are only finitely many $m\in\Z$ such that $\Delta_{m}$ and $\Delta_{m+1}$ are both perfect squares. 
     
    Suppose $b^2-4ac>0$. Then $\Delta_n>0$ for all but finitely many $n$ since it is a polynomial of degree 2 in $n$ with positive leading coefficient. Let us consider the polynomial
    $$q(T)=T^2+\frac{2bd-4ae}{b^2-4ac}T+\frac{d^2-4af}{b^2-4ac}\in\Q[T].$$
    The discriminant of $q(T)$ as a polynomial in $T$ is 
    $$\frac{1}{(b^2-4ac)}\left[(2ae-bd)^2-(4ac-b^2)(4af-d^2)\right],$$
    so it is nonzero by Equation~\eqref{det}. 
    It follows that $q(T)$ is not the square of a polynomial of degree 1, and we can apply Hilbert's Irreducibility Theorem~\ref{Hilbert} to the polynomials 
    $$U^2-q(T), V^2-(b^2-4ac)q(T)\in \Q[U,V,T]. $$
    Since $\Delta_n=(b^2-4ac)q(n)$, there exist infinitely many $n$ such that $q(n)$ and $\Delta_{n}$ are not perfect squares and $\Delta_{n}$ is positive.
\end{proof}

To show the existence of infinitely many triple intersections on the real torus, we will use the following theorem, which characterizes affine curves that contain infinitely many integral points.

\begin{thm}[{\cite[Theorem 1.2]{ABP2009}}]\label{ABPthm}
    Let $\cC\subset \A^n$ be an affine curve defined over a number field $K$, and $O_K$ be the ring of integers of $K$. Let $\widetilde{\cC}$ be a projective completion of the normalization $\cC^{\nu}$ of $\cC$ and $\cC_\infty=\widetilde{\cC}\setminus \cC$. The following two conditions are equivalent:
    \begin{compactenum}[(a)]
        \item The set $\cC(\cO_K)$ is infinite.
        \item The curve $\cC$ is of genus 0, the set $\cC(\cO_K)$ contains a non-singular point, and the set $\cC_{\infty}$ has one of the following properties:
        \begin{compactenum}[(i)]
            \item $|\cC_{\infty}| = 1$,
            \item $|\cC_{\infty}| = 2$, both points at infinity are defined over $K$, and $K$ is neither $ \Q$ nor an imaginary quadratic field,
            \item $|\cC_{\infty}| = 2$, the points at infinity are conjugate over $K$, and the field of definition of the points at infinity is not a CM-extension of $K$.
        \end{compactenum}
    \end{compactenum}
\end{thm}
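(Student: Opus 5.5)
We first reduce to the normalization. Let $\nu\colon \cC^{\nu}\to\cC$ be the normalization. Only finitely many points of $\cC$ are singular, and $\nu$ is an isomorphism away from them. So $\cC(\cO_K)$ is infinite if and only if infinitely many points of $\cC^{\nu}$ have integral images. The coordinate functions $x_1,\dots,x_n$ are regular on $\cC^{\nu}$, so they lie in the affine coordinate ring $\cO(\cC^{\nu})$. This ring is integral over $K[x_1,\dots,x_n]$, and after clearing a finite set of denominators every element of it is integral over $\cO_K[x_1,\dots,x_n]$. Hence integral points of $\cC$ correspond, up to finitely many exceptions, to $S$-integral points of $\cC^{\nu}$ for a suitable finite set of places $S$, together with finitely many congruence conditions.

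For (a)$\Rightarrow$(b), I apply Siegel's Theorem~\ref{SiegelThm}. It forces $2g-2+|\cC_\infty|\le 0$, so $g=0$ and $|\cC_\infty|\in\{1,2\}$; infinitely many integral points then give a non-singular one. It remains to rule out the excluded cases when $|\cC_\infty|=2$.
\begin{compactenum}[(1)]
\item If both points at infinity are $K$-rational, choose a coordinate $t$ on $\pr^1$ with divisor $P-Q$. Then $\cO(\cC^{\nu})=K[t,t^{-1}]$, and integrality of all $x_i$ forces $t$ and $t^{-1}$ to be integral up to bounded denominators. So $t$ lies in finitely many cosets of $\cO_K^{*}$, which is finite exactly when $K=\Q$ or $K$ is imaginary quadratic.
\item If the two points are conjugate over a quadratic extension $L/K$, the analogous coordinate ring is a form of $\mathbb{G}_m$, namely a norm-one torus $\{N_{L/K}(z)=1\}$. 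Integral points correspond to norm-one units of $\cO_L$ up to finite index. Their rank is $\mathrm{rk}\,\cO_L^{*}-\mathrm{rk}\,\cO_K^{*}$ by Dirichlet's unit theorem, and this vanishes precisely when $L/K$ is a CM extension.
\end{compactenum}

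For (b)$\Rightarrow$(a), I construct infinitely many points starting from the given non-singular integral point $P_0$.
\begin{compactenum}[(1)]
\item If $|\cC_\infty|=1$, then $\cC^{\nu}\cong\A^1$ and each $x_i$ is a polynomial $p_i(t)\in K[t]$. Take $t_0=\nu^{-1}(P_0)$. The integers $t\equiv t_0$ modulo a sufficiently divisible ideal $\mathfrak{m}$ make every $p_i(t)-p_i(t_0)$ integral, because the denominators of the coefficients are bounded.
\item If $|\cC_\infty|=2$, the curve $\cC^{\nu}$ is a torus $T$ (split or norm-one), and the $x_i$ are Laurent-type regular functions on $T$. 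Acting on $P_0$ by the infinite group of units $u\equiv1\pmod{\mathfrak{m}}$ in the relevant unit group (infinite by the hypotheses on $K$, resp.\ on $L/K$) keeps all coordinates integral, for $\mathfrak{m}$ chosen to absorb the denominators.
\end{compactenum}

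The main obstacle I expect is this last congruence step. One must show that translating $P_0$ by units close to $1$ $\mathfrak{m}$-adically preserves integrality of every coordinate, including at primes dividing the denominators of the coefficients. I would handle it by writing each $x_i(uP_0)-x_i(P_0)$ as a polynomial in $u-1$ (and in $u^{-1}-1$ for the Laurent terms) with coefficients of bounded denominator. For the conjugate case I would first base change to $L$ so the torus splits, then restrict to norm-one units.
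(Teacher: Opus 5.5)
The paper does not prove this statement. It quotes the result from Alvanos--Bilu--Poulakis and uses it as a black box, in Proposition~\ref{Mainconics} and in the unit-circle discussion, so there is no internal argument to compare against.

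Your sketch is a correct reconstruction of the standard proof, and it is essentially the route of that source:
\begin{itemize}
\item Siegel's theorem forces genus $0$ and $|\cC_\infty|\le 2$.
\item $\cC^\nu$ is modelled by $\A^1$, $\mathbb{G}_m$, or the norm-one torus.
\item Integrality over $\cO_K[x_1,\dots,x_n]$ transfers bounded denominators between $\cC$ and $\cC^\nu$.
\item Dirichlet's unit theorem, together with a congruence subgroup of units, settles the remaining cases in both directions.
\end{itemize}

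Three small points should be tightened.
\begin{enumerate}
\item In case (1) of (b)$\Rightarrow$(a), $t_0$ need not be integral, so take $t\in t_0+\mathfrak{m}$ rather than ``integers $t\equiv t_0$''. Expand each $p_i$ around $t_0$, and choose $\mathfrak{m}$ to clear the denominators of those Taylor coefficients.
\item Identifying $\widetilde{\cC}$ with $\pr^1_K$, and hence $\cC^\nu$ with a norm-one torus, in the conjugate case genuinely uses a $K$-rational point, namely the non-singular integral point. A genus-$0$ curve whose points at infinity are conjugate need not be $\pr^1_K$ otherwise, so say this explicitly.
\item The norm-one units have rank $\mathrm{rk}\,\cO_L^*-\mathrm{rk}\,\cO_K^*$ because $N_{L/K}(\cO_L^*)\supseteq(\cO_K^*)^2$ already has full rank in $\cO_K^*$. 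This rank difference vanishes exactly when $K$ is totally real and $L$ is totally imaginary, which is the CM condition.
\end{enumerate}
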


Now we prove the central result on non-compact irreducible conics, which forms the core of the proofs of Theorems~\ref{mainparabolas},~\ref{mainhyperbolas}. The general results will later follow from a reduction argument.

\begin{pro}\label{Mainconics}
    Let $\cC\subset\R^2$ be a non-compact irreducible conic defined by the equation
    $$\cC:ax^2+bxy+cy^2+dx+ey+f=0\quad \text{ with }\quad a,b,c,d,e,f\in\Z, a\neq 0.$$
    There exist infinitely many pairs $(\theta, q)$ with $\theta\in\R\setminus\Q$ and $q\in\Q$ such that the curves $\cC$, $\ell_1:y=0$, $\ell_2: x=\theta y+q$ have infinitely many triple intersections on the real torus. In particular, $\mathcal{L}_\cC$ is not empty.
\end{pro}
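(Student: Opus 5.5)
We would reduce the triple-intersection condition to the existence of infinitely many integral points on an auxiliary conic defined over $\Q$, and then apply Theorem~\ref{ABPthm} with $K=\Q$.

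\emph{Translating the condition.} The $\Z^2$-translates of $\ell_1$ are the lines $y=n$ with $n\in\Z$. The translates of $\ell_2$ are the lines $x=\theta(y+m)+q+k$ with $m,k\in\Z$. A point $P=(x,y)\in\cC$ therefore gives a triple intersection on the torus exactly when
$$y\in\Z\quad\text{and}\quad x\in q+\Z+\theta\Z.$$
Two such points with $x=q+\alpha+\beta\theta$ and $x'=q+\alpha'+\beta'\theta$ have the same image on the torus only if $\beta=\beta'$, because $\theta$ is irrational. So it suffices to produce points with $y\in\Z$, $x\in q+\Z+\theta\Z$ and unbounded $\beta$.

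\emph{Choosing $\theta$.} By Lemma~\ref{existence} there are infinitely many pairs $(\theta,n_0)$ with $P(\theta,n_0)=0$, $\theta\notin\Q$ real and $n_0\in\Z$. If $b^2-4ac>0$, we may also assume that $\Delta_{n_0}/(b^2-4ac)$ is not a rational square. Then $\theta$ is a real quadratic irrationality in $K=\Q(\sqrt{\Delta_{n_0}})$, and distinct $n_0$ give infinitely many distinct $\theta$. Let $\sigma$ be the nontrivial automorphism of $K$. We look for points $(x,y)\in\cC$ with $y\in\Z$ and $x=q+\alpha+\beta\theta$, $\alpha,\beta\in\Z$.

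\emph{The auxiliary conic.} For $y\in\Z$, the condition that $x$ and $\sigma(x)$ are the two roots of $P(\cdot,y)$ reads
$$x+\sigma(x)=-\frac{by+d}{a},\qquad (x-\sigma(x))^2=\frac{\Delta_y}{a^2}.$$
Since $x-\sigma(x)=\beta(\theta-\sigma\theta)$ and $(\theta-\sigma\theta)^2=\Delta_{n_0}/a^2$, this becomes the system
$$\Delta_{n_0}\beta^2=\Delta_y,\qquad 2q+2\alpha+\beta\,\mathrm{tr}(\theta)=-\frac{by+d}{a}.$$
The first equation defines a conic $\cD$ in the $(\beta,y)$-plane over $\Q$. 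It has the integral point $(\beta,y)=(1,n_0)$, coming from $x=\theta$, $\alpha=0$, with $q=0$ in that instance.

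\emph{Counting integral points on $\cD$.}
\begin{compactenum}[(1)]
\item If $b^2-4ac=0$, then $\Delta_y$ is linear in $y$ and nonconstant by Equation~\eqref{det}. So $\cD$ is a parabola with one point at infinity.
\item If $b^2-4ac>0$, the points at infinity of $\cD$ have slope $\pm\sqrt{(b^2-4ac)/\Delta_{n_0}}$. By the choice of $n_0$ this slope is irrational, so $\cD$ has two conjugate points at infinity defined over a real quadratic field, which is not a CM extension of $\Q$.
\end{compactenum}
In both cases Theorem~\ref{ABPthm}(b) with $K=\Q$ gives infinitely many integral points on $\cD$. In the hyperbolic case these are the solutions of a Pell-type equation, generated from $(1,n_0)$ by a unit of norm $1$.

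\emph{Main obstacle: integrality of $\alpha$.} The second equation must admit a solution $\alpha\in\Z$. Its solution $\alpha$ has denominators dividing $2a$ times the denominator of $\mathrm{tr}(\theta)$, which is fixed. I would handle this in two steps:
\begin{compactenum}[(1)]
\item Restrict to integral points of $\cD$ in a fixed residue class modulo a suitable $N$. In the Pell case this amounts to taking powers of the fundamental unit that are $\equiv 1 \pmod N$, an infinite subgroup; in the parabolic case it amounts to an arithmetic progression of parameters.
\item Choose the rational $q$ so that, on that class, $\alpha\in\Z$ exactly.
\end{compactenum}
This is precisely the freedom that the shift $q\in\Q$ in the statement provides.

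The resulting points have $\beta\to\infty$, so they are pairwise distinct on the torus. Hence $\cC,\ell_1,\ell_2$ have infinitely many triple intersections for each of the infinitely many pairs $(\theta,q)$, and in particular $\mathcal{L}_\cC\neq\emptyset$.
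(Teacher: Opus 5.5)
Your proposal is correct and is essentially the paper's proof. Your auxiliary conic $\Delta_{n_0}\beta^2=\Delta_y$ is exactly the paper's conic $\cD$ from Equation~\eqref{newconic} once it is simplified, with your $\beta$ playing the role of the paper's $n+k$; you derive it via trace and norm in $\Q(\theta)$ rather than by comparing the coefficients of $1$ and $\theta$. You then apply Theorem~\ref{ABPthm} over $\Q$ and secure integrality of $\alpha$ by fixing a residue class modulo $2a$ and choosing $q$ to match, just as the paper does. Two small remarks: your explicit argument that the points are distinct on the torus (unbounded $\beta$ together with the irrationality of $\theta$) fills in something the paper leaves implicit, while neither you nor the paper states that $\cD$ is nondegenerate in the hyperbola case, which follows at once from \eqref{det}.
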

\begin{proof}
        Let $\theta\in\R\setminus\Q$, $n_0\in\Z$ as in Lemma~\ref{existence}. We have to prove that, for some $q\in\Q$, the system 
    \begin{equation*}\label{system}
        \begin{cases}
            y=n\\
            x=\theta (y +  k)+ q+m\\
            ax^2+bxy+cy^2+dx+ey+f=0
        \end{cases}
    \end{equation*}
    has solutions for infinitely many integral triples $(n,m,k)$. By substitution, we get
    \begin{align*}
        0=&\;a[\theta(n+k)+m+q]^2+b[\theta(n+k)+m+q]n+cn^2+d[\theta(n+k)+m+q]+en+f\\
        = &\;a\theta^2(n+k)^2+2a(m+q)(n+k)\theta+a(m+q)^2+bn(n+k)\theta\\
        &+b(m+q)n+cn^2+d(n+k)\theta+d(m+q)+en+f\\
        =&-(n+k)^2(b\theta n_0+cn_0^2+d\theta+en_0+f)+2a(m+q)(n+k)\theta+a(m+q)^2+bn(n+k)\theta\\
        &+b(m+q)n+cn^2+d(n+k)\theta+d(m+q)+en+f
    \end{align*}
    Since $\theta$ is irrational, the last equation leads to the following system, where we have defined
    $$\gamma:=cn_0^2+en_0+f\quad\text{and}\quad\beta:=bn_0+d:$$
    $$\begin{cases}
        \beta(n+k)^2=2a(m+q)(n+k)+(bn+d)(n+k)\\
        \gamma(n+k)^2=a(m+q)^2+(bn+d)(m+q)+cn^2+en+f
    \end{cases}$$
    Supposing $n + k\neq 0$, from the first equation we obtain
    $$m+q=\frac{\beta(n+k)-(bn+d)}{2a}.$$
    By substitution, we get
    \begin{equation}\label{newconic}
        \gamma(n+k)^2=a\left[\frac{\beta(n+k)-(bn+d)}{2a}\right]^2+(bn+d)\left[\frac{\beta(n+k)-(bn+d)}{2a}\right]+cn^2+en+f
    \end{equation}
    Equation~(\ref{newconic}) defines a conic $\cD$ over $\Q$ in the coordinates $(n+k,n)$, which has the integral point $(1, 1-n_0)$. The homogeneous part of the equation is
    $$\gamma(n+k)^2=a\left[\frac{\beta(n+k)-bn}{2a}\right]^2+bn\left[\frac{\beta(n+k)-bn}{2a}\right]+cn^2$$
    which leads to
    \begin{equation}\label{omogenea}
        (\beta^2-4a\gamma)(n+k)^2=(b^2-4ac)n^2.
    \end{equation}
    We observe $\Delta:=\beta^2-4a\gamma>0$. Indeed, $\Delta$ is the discriminant of a polynomial that has two real irrational roots. Since $C$ is non-compact, we have $b^2-4ac\geq 0$. Therefore, we can apply Theorem~\ref{ABPthm} on the conic $\mathcal{D}$ with $K=\Q$. Indeed, one of the following conditions holds:
    \begin{compactenum}[(i)]
        \item $b^2-4ac=0$, that is, $\cC$ is a parabola and $\cD$ has only one point at infinity;
        \item $b^2-4ac>0$ and $\frac{4a\gamma-\beta^2}{4ac-b^2}$ is not the square of a rational number by Lemma~\ref{existence}, that is, $\cC$ is a hyperbola and $\cD_\infty$ consists of two real irrational points.
    \end{compactenum}
    It follows that there exist infinitely many integral solutions to Equation~\eqref{newconic}. For infinitely many of them, there exists an integer $0\leq N< 2a$ such that the congruence
    $$\beta(n+k)-(bn+d)\equiv N\quad \text{mod }2a$$
    holds. We set $q:=\frac{N}{2a}$, obtaining that $m$ is an integer and hence the System~\eqref{system} has solutions for infinitely many integral values of $(k,m,n)$. 
\end{proof} 

\begin{proof}[Proof of Theorem~{\normalfont\ref{mainparabolas}}]
    If $u=0$, we can suppose $v=1$. By hypothesis $a\neq 0$, then, the statement follows from Lemma~\ref{existence} and Proposition~\ref{Mainconics}. If $v=0$, we can apply the same argument switching $x$ and $y$.
    
    Now suppose $u,v\neq 0$. Without loss of generality, we assume $\mathrm{gcd}(u,v)=1$. By Bezout's identity, there exist integers $m, n$ satisfying the relation $mv + nu = 1$. Consider the map 
    $$T\colon \R^2\rightarrow \R^2\colon\begin{pmatrix}
        x\\
        y
    \end{pmatrix}\mapsto 
    \begin{pmatrix}
        vx+ny\\
        -ux+my
    \end{pmatrix}$$
    The inverse images of $\ell_1$ and $\cC$ under the map $T$ are defined by the equations
    \begin{align*}
        \ell_1':0&=u(vx+ny)+v(-ux+my)=y\\
        \cC':0&=[a(vx+ny)+b(-ux+my)]^2+c(vx+ny)+d(-ux+my)+e=0.
    \end{align*}
    The map $T$ is described by a matrix in $\mathrm{SL}_2(\Z)$, then it is an isomorphism of $\R^2$ which preserves the lattice $\Z^2$, and so proving the theorem for the pair $\cC,\ell_1$ is equivalent to proving it for the pair $C',\ell'_1$. Since $(a:b)\neq (u:v)$, we have $av-bu\neq 0$ and the statement follows from Lemma~\ref{existence} and Proposition~\ref{Mainconics}. 
\end{proof}

\begin{proof}[Proof of Theorem~{\normalfont\ref{mainhyperbolas}}]
    As in the proof of Theorem~\ref{mainparabolas}, we can suppose $u,v\neq 0$ and $\mathrm{gcd}(u,v)=1$. We again consider the map 
    $$T\colon \R^2\rightarrow \R^2\colon\begin{pmatrix}
        x\\
        y
    \end{pmatrix}\mapsto 
    \begin{pmatrix}
        vx+ny\\
        -ux+my
    \end{pmatrix}$$
    The inverse images of $\ell_1$ and $\cC$ under the map $T$ are defined by the equations
    \begin{align*}
        \ell_1':0&=u(vx+ny)+v(-ux+my)=y\\
        \cC':0&=a(vx+ny)^2+b(vx+ny)(-ux+my)+c(-ux+my)^2\\
        &+d(vx+ny)+e(-ux+my)+f=0.
    \end{align*}
    The coefficient of $x^2$ in the equation of $\cC'$ is $av^2-buv+cu^2\neq 0$, since by hypothesis the point $(-v:u:0)\in\pr^2(\Q)$ is not a point at infinity of $\cC$. The statement follows from Lemma~\ref{existence} and Proposition~\ref{Mainconics} as in the proof of Theorem~\ref{mainparabolas}. 
\end{proof}

\begin{oss}
    The hypothesis $(a:b)\neq (u:v)$ in $\pr^1(\Q)$ in Theorem~\ref{mainparabolas} cannot be removed. Suppose $(a:b)= (u:v)$ in $\pr^1(\Q)$, and, without loss of generality, $u\neq 0$ and if $v\neq 0$ then $\gcd(u,v)=1$. Then, the parabola $\cC$ is given by the equation
    $$C:k^2(ux+vy)^2+cx+dy+e=0\quad k\in\Z.$$
    Translating the line $\ell$ by the integer vector $(m,n)\in\Z^2$, we obtain the line $\ell_{m,n}$ of equation
    \begin{equation}\label{rettaoss}
        \ell_{m,n}: x=\frac{1}{u}(-vy+um+vn).
    \end{equation}
    By substitution, we get
    $$k^2(um+vn)^2+\frac{c}{u}(-vy+um+vn)+dy+e=0,$$
    which gives, if $d-\frac{cv}{u}\neq 0$, 
    $$y=-\left(d-\frac{cv}{u}\right)^{-1}\left(k^2(um+vn)^2+\frac{c}{u}(um+vn)+e\right).$$
    Therefore, $y\in\Z[\frac{1}{ud-cv}]$, and so the intersection of $\cC$ with any line of the form $\ell_{m,n}$ belongs to the union of finitely many $\Z^2$-orbits. If $ud-cv=0$, then the equation for $\cC$ becomes
    \begin{equation}\label{parabolaoss}
        \cC:k^2(ux+vy)^2+h(ux+vy)+e=0\quad h,k\in\Z.
    \end{equation}
    If there is an intersection between $\ell_{m,n}$ for some $(m,n)\in\Z^2$ and $\cC$, then all the solutions $(x,y)$ of Equation~(\ref{rettaoss}) satisfies Equation~(\ref{parabolaoss}), hence $\cC$ is not irreducible. 
    Therefore, for any other line $\ell'$, the curves $\cC,\ell, \ell'$ have at most finitely many triple intersections on the real torus.

    The same remark holds for Theorem~\ref{mainhyperbolas}. Indeed, one can provide an analogous argument for a hyperbola with the points at infinity defined over $\Q$ and a line containing one of them.    
\end{oss}

\subsection{Two lines and a compact conic}
The only case left for conics is the compact case. We consider the unit circle $\cC$ defined by the equation $x^2+y^2=1$, and we ask if there exist real numbers $\theta_1,\theta_2$ such that there exist infinitely many orbits for the action of $\Z^2$ on $\R^2$ which intersect $\cC$ and the lines $$\ell_1:y=\theta_1x\quad\text{and}\quad \ell_2: y=\theta_2x.$$

The intersections of a line in the $\Z^2$-orbit of $\ell_1$ and one in the $\Z^2$-orbit of $\ell_1$ are the solutions of the following equations, where $m_1,m_2,n_1,n_2$ are integers:
\begin{equation*}
    \begin{cases}
        y+n_1=\theta_1(x+m_1)\\
        y+n_2=\theta_2(x+m_2).
    \end{cases}
\end{equation*}
Then, the set of intersections is 
$$\left\{\left(\frac{n_1-n_2+\theta_2m_2-\theta_1m_1}{\theta_1-\theta_2},\frac{\theta_2n_1-\theta_1n_2+\theta_1\theta_2(m_2-m_1)}{\theta_1-\theta_2}\right)\mid m_1,m_2,n_1,n_2\in\Z\right\}.$$
Therefore, our question is equivalent to asking if there exist $\theta_1,\theta_2\in\R$ such that the following equation admits infinitely many integer solutions $(m_1,n_1,m_2,n_2)$:
\begin{equation}\label{eqcircle}
    (n_1-n_2+\theta_2m_2-\theta_1m_1)^2+(\theta_2n_1-\theta_1n_2+\theta_1\theta_2(m_2-m_1))^2=(\theta_1-\theta_2)^2.
\end{equation} 

The following result excludes the possibility of replicating the argument given in Proposition~\ref{Mainconics} in the case of the unit circle.

\begin{thm}
    If $[\Q(\theta_1):\Q],[\Q(\theta_2):\Q]\leq 2$, then $\ell_1, \ell_2,$ and $\cC$ have only finitely many triple intersections on the real torus.
\end{thm}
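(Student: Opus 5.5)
The plan is to turn triple intersections into integral points of a fixed conic over a totally real field, and then invoke Theorem~\ref{ABPthm} to show there are only finitely many. Set $K:=\Q(\theta_1,\theta_2)$. If $\theta_i$ is irrational of degree $2$, then $\Q(\theta_i)$ is a real quadratic field, because it is generated by a real root of a rational quadratic. Hence $K$ is $\Q$, a real quadratic field, or a compositum of two real quadratic fields. In every case $K$ is totally real.

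By the computation preceding Equation~\eqref{eqcircle}, every triple intersection is the image on the torus of a point $P=(x,y)\in\cC(\R)$ of the form
$$x=\frac{n_1-n_2+\theta_2m_2-\theta_1m_1}{\theta_1-\theta_2},\qquad y=\frac{\theta_2n_1-\theta_1n_2+\theta_1\theta_2(m_2-m_1)}{\theta_1-\theta_2}.$$
Choose a positive integer $N$ such that $N\theta_1$ and $N\theta_2$ are algebraic integers. Then $X:=N^2(\theta_1-\theta_2)x$ and $Y:=N^2(\theta_1-\theta_2)y$ lie in $\cO_K$. They satisfy
$$\cD:\ X^2+Y^2=N^4(\theta_1-\theta_2)^2,$$
which is an affine conic defined over $K$. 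The map $P\mapsto (X,Y)$ is injective. So it suffices to show that $\cD(\cO_K)$ is finite: then there are finitely many points $P$, and therefore finitely many triple intersections on the torus. (Different quadruples $(m_1,n_1,m_2,n_2)$ may give the same $P$; this is harmless.)

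To prove finiteness I apply Theorem~\ref{ABPthm} to $\cD$. The curve $\cD$ is a smooth conic of genus $0$, since $\theta_1\neq\theta_2$. It has exactly two points at infinity, $(1:\pm i:0)$, so $|\cD_\infty|=2$ and case (i) is excluded. Since $K\subset\R$, we have $i\notin K$, so the two points at infinity are not defined over $K$; this excludes case (ii). They are conjugate over $K$, with field of definition $K(i)$. This is a totally imaginary quadratic extension of the totally real field $K$, that is, a CM-extension of $K$. This excludes case (iii). Condition (b) of Theorem~\ref{ABPthm} therefore fails, so $\cD(\cO_K)$ is finite.

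The main point to check carefully is the CM verification, namely that $K$ is totally real. This is exactly where the hypothesis $[\Q(\theta_i):\Q]\leq 2$ is used. For higher degree, a real $\theta_i$ may have non-real conjugates, and then $K(i)/K$ need not be CM. The other technical detail is the passage from the denominators $(\theta_1-\theta_2)$ and the non-integrality of the $\theta_i$ to genuinely $\cO_K$-integral points, which the scaling by $N^2(\theta_1-\theta_2)$ handles. If one prefers to avoid Theorem~\ref{ABPthm}, an alternative route is direct. Over $K(i)$, factor $(X+iY)(X-iY)=N^4(\theta_1-\theta_2)^2$. Each factor then lies in finitely many classes modulo units of $\cO_{K(i)}$. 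For a CM extension, the unit group of $\cO_{K(i)}$ has the same rank as that of $\cO_K$, up to torsion. Combining this with the fact that $X,Y\in K$ forces $X\pm iY$ to lie in a finite set.
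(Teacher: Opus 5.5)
Your proof is correct and follows the paper's route. $K=\Q(\theta_1,\theta_2)$ is totally real, so the points at infinity $(1:\pm i:0)$ generate the CM-extension $K(i)/K$, and Theorem~\ref{ABPthm} then rules out infinitely many integral points. Your rescaling by $N^2(\theta_1-\theta_2)$ and your uniform treatment of rational slopes (which the paper handles separately via Theorem~\ref{compactintersections}) make explicit an integrality step the paper glosses over when it asserts finiteness of $\cC(\cO_K)$: the triple intersections are not themselves $\cO_K$-points of the unit circle, and your scaled conic $\cD$ is what makes the application of Theorem~\ref{ABPthm} legitimate.
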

\begin{proof}
    We observe that if $\theta_1$ is rational, then $\ell_1$ and $\cC$ have only finitely many triple intersections on the real torus, and the same holds for $\theta_2$. If $[\Q(\theta_1):\Q]=[\Q(\theta_2):\Q]=2$, then $K:=\Q(\theta_1,\theta_2)$ is either a quadratic or a biquadratic extension of $\Q$. Therefore, $K$ is a totally real extension of $\Q$. Since the points at infinity of the curve $\cC$ are defined over the field $K(i)$, which is a CM-field, then the set $\cC(\mathcal{O}_K)$ is finite by Theorem~\ref{ABPthm}. It follows that there are only finitely many solutions to Equation~\ref{eqcircle}.
\end{proof}

We make the following conjecture, and in the remaining part of this section, we give some evidence.

\begin{con}
    Let $\theta_1,\theta_2$ be real numbers. Then, $\ell_1:y=\theta_1x$, $\ell_2:y=\theta_2x$, and $\cC:x^2+y^2=1$ have only finitely many triple intersections on the real torus. 
\end{con}

Suppose $[\Q(\theta_1,\theta_2):\Q]=d$, and $\theta_1,\theta_2$ are both irrational. Let $u\in \Q(\theta_1,\theta_2)$ such that $1,u,u^2,\dots u^{d-1}$ is a basis of $\Q(\theta_1,\theta_2)$ over $\Q$ as a vector space. There exist $a_i,b_i,c_i\in\Q$ for $0\leq i\leq d_1$ satisfying
$$\theta_1=\sum_{i=0}^{d-1} a_{i}u^{i}\qquad \theta_2=\sum_{i=0}^{d-1} b_{i}u^{i}\qquad u^d=\sum_{i=0}^{d-1} c_iu^i.$$
We can rewrite Equation~\ref{eqcircle} as 
$$\sum_{j=0}^{d-1} g_j\left(\{a_i\}_{0\leq i\leq d-1},\{b_i\}_{0\leq i\leq d-1},\{c_i\}_{0\leq i\leq d-1},m_1,m_2,n_1,n_2\right)u^j=0$$
Since $u$ is irrational and $m_1,m_2,n_1,n_2,a_i,b_i,c_i$ for all $0\leq i\leq d-1$ are rational, a solution to Equation~\ref{eqcircle} corresponds to an integral point of the affine variety $V\subset \A^4$ defined by
    \begin{equation*}
        \begin{cases}
            g_0\left(\{a_i\}_{0\leq i\leq d-1},\{b_i\}_{0\leq i\leq d-1},\{c_i\}_{0\leq i\leq d-1},m_1,m_2,n_1,n_2\right)=0\\
            g_1\left(\{a_i\}_{0\leq i\leq d-1},\{b_i\}_{0\leq i\leq d-1},\{c_i\}_{0\leq i\leq d-1},m_1,m_2,n_1,n_2\right)=0\\
            \vdots\\
            g_{d-1}\left(\{a_i\}_{0\leq i\leq d-1},\{b_i\}_{0\leq i\leq d-1},\{c_i\}_{0\leq i\leq d-1},m_1,m_2,n_1,n_2\right)=0
        \end{cases}
    \end{equation*}
where $a_i,b_i,c_i$ are fixed. From Equation~\eqref{eqcircle}, it is easy to see that $V$ always contains the curve $\widetilde{\cC}\subset \A^4$ defined by 
$$n_1=n_2\qquad m_1=m_2\qquad n_1^2+m_1^2=1.$$

\begin{pro}\label{ifcurve}
    In the same notation, if $V$ is a curve, then $V(\Z)$ is finite. Hence, $\ell_1,\ell_2,$ and $\cC$ have only finitely many intersections on the real torus.
\end{pro}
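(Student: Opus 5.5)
The plan is to reduce the claim to the boundedness of the intersection points on the circle together with Siegel's Theorem~\ref{SiegelThm}. Since $V$ is a curve defined over $\Q$ (the $a_i,b_i,c_i$ are rational), it has finitely many irreducible components, each defined over a number field. It suffices to show that each component $W$ carries only finitely many integral points. Zero-dimensional components are trivial.

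The distinguished component $\widetilde{\cC}$ is also harmless. Its integral points satisfy $n_1^2+m_1^2=1$, so there are only four of them. So fix a one-dimensional component $W\neq\widetilde{\cC}$ and suppose, for a contradiction, that $W(\Z)$ is infinite. By Siegel's theorem (or Theorem~\ref{ABPthm}), $W$ has genus $0$ and at most two points at infinity. If $W$ has two points at infinity that are complex conjugate and non-real, then $W(\R)$ is compact, hence contains only finitely many integral points, a contradiction. So we may assume $W$ has a real point at infinity $P$. The infinitely many integral points then escape to $P$ along a real branch of $W$. Their normalized vectors $(m_1,n_1,m_2,n_2)/\|(m_1,n_1,m_2,n_2)\|$ converge to a real unit vector $w$ giving the asymptotic direction of that branch.

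Next I exploit boundedness. For every integral solution, the linear forms
$$L_1=n_1-n_2+\theta_2m_2-\theta_1m_1,\qquad L_2=\theta_2n_1-\theta_1n_2+\theta_1\theta_2(m_2-m_1)$$
are bounded, because the corresponding intersection point lies on the unit circle. The system $g_0=\dots=g_{d-1}=0$ says exactly that Equation~\eqref{eqcircle} holds after applying every embedding $\sigma$ of $\Q(\theta_1,\theta_2)$. Hence for every \emph{real} embedding $\sigma$, the forms $\sigma(L_1),\sigma(L_2)$ are also bounded on $W(\Z)$. Dividing by the norm and passing to the limit gives $\sigma(L_1)(w)=\sigma(L_2)(w)=0$ for all real $\sigma$.

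The final step is to show that these linear conditions force $w$ to lie on the asymptotic cone of $\widetilde{\cC}$, namely $m_1=m_2$, $n_1=n_2$. From this I would derive a contradiction: a branch of $W$ with that asymptotic direction would make $W$ meet $\widetilde{\cC}$ at infinity in a way incompatible with $W\neq\widetilde{\cC}$. For instance, restrict the equations of $V$ to the direction $w$ and use the irrationality of $\theta_1,\theta_2$ to see that only the circle equation survives.

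I expect this last step to be the main obstacle, for two reasons. First, from the real embeddings alone, the linear conditions $\sigma(L_i)(w)=0$ cut out only a subspace, and I would need to use $\theta_1,\theta_2\notin\Q$ and $\theta_1\neq\theta_2$ to show that this subspace meets the tangent cone of $W$ only in $\widetilde{\cC}$'s direction. Second, the non-real embeddings give no boundedness. If this fails, I would try to treat the leading homogeneous parts of the $g_j$ directly. Their common zero locus in $\pr^3$ describes the points at infinity of $V$. I would show that, apart from the point at infinity of $\widetilde{\cC}$, these points at infinity are non-real or lie outside the real cone allowed by boundedness, which again forces $W(\R)$ to have no unbounded integral branch.
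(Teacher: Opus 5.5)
Your setup matches most of the paper's argument. A component $W$ with infinitely many integral points has genus $0$ and at most two points at infinity by Siegel's theorem, and compactness handles the case with no real point at infinity. Your limit of $L_1,L_2$ under the identity embedding gives exactly what the paper extracts from Equation~\eqref{eqcircle}. Indeed, $L_1=(n_1-\theta_1m_1)-(n_2-\theta_2m_2)$ and $L_2=\theta_2(n_1-\theta_1m_1)-\theta_1(n_2-\theta_2m_2)$, so with $\theta_1\neq\theta_2$ the direction is $w=(x_0,\theta_1x_0,y_0,\theta_2y_0)$ in the coordinates $(m_1,n_1,m_2,n_2)$.

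The gap is the final step, which you leave open and aim at the wrong target. Showing that $w$ satisfies $m_1=m_2$, $n_1=n_2$ is equivalent to showing that no such $w$ exists, since it would force $(\theta_1-\theta_2)x_0=0$ and hence $w=0$. Your reasons for it do not work. The circle $\widetilde{\cC}$ has only the non-real points at infinity $(1:\pm i:1:\pm i:0)$, and two distinct components sharing a point at infinity is no contradiction anyway. Your fallback, that $V$ has no real points at infinity, is a geometric claim about these specific quadrics for which you give no argument. As you note yourself, non-real embeddings give no boundedness. For example, if $\Q(\theta_1,\theta_2)$ is a cubic field with one real embedding, your conditions say nothing beyond the shape of $w$.

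The missing idea is arithmetic: you do not need to exclude the real point at infinity $P_0=(x_0:\theta_1x_0:y_0:\theta_2y_0:0)$, only to count its Galois conjugates.
\begin{itemize}
\item $W$ is the Zariski closure of infinitely many rational points, so it is defined over $\Q$. Hence $\mathrm{Gal}(\overline{\Q}/\Q)$ permutes its points at infinity. These conjugates need not be real, so no boundedness is required for them.
\item Since $(x_0,y_0)\neq(0,0)$, one of the coordinate ratios of $P_0$ equals $\theta_1$ or $\theta_2$.
\item So $P_0$ has at least three distinct conjugates whenever these numbers have degree at least $3$ over $\Q$, as in the cubic case where the proposition is applied.
\item Then $W$ has at least three points at infinity, hence at least three places at infinity on its smooth completion. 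This contradicts the bound of two from Siegel's theorem, which you already invoked.
\end{itemize}
This is exactly how the paper concludes.
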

\begin{proof}
    Suppose that an irreducible component $V'$ of $V$ contains infinitely many $\Z$-integral points. Then $V'$ is defined over $\Q$. If its points at infinity are all complex, then $V'$ is compact in $\A^4(\R)$, and so it has only finitely many $\Z$-integral points. Now, suppose that $P_0=(x_0:y_0:z_0:w_0:0)$ is a point at infinity of $V'$ defined over $\R$. The point $P_0$ is also a point at infinity of the quadric hypersurface defined by Equation~\ref{eqcircle}, and it then satisfies the equation
    $$(z_0-w_0+\theta_2y_0-\theta_1x_0)^2+(\theta_2z_0-\theta_1w_0+\theta_1\theta_2(y_0-x_0))^2=0.$$
    Since $P_0$ is defined over $\R$, we get
    \begin{equation*}
        \begin{cases}
            z_0-w_0+\theta_2y_0-\theta_1x_0=0\\
            \theta_2z_0-\theta_1w_0+\theta_1\theta_2(y_0-x_0)=0.
        \end{cases}
    \end{equation*}
    It follows $z_0=\theta_1x_0$ and $w_0=\theta_2y_0$, and so the point $P_0=(x_0:y_0:\theta_1x_0:\theta_2y_0:0)$ is defined over a field of degree at least 3 over $\Q$. Therefore, $V'$ has at least three points at infinity, and by Siegel's theorem, the set $V'(\Z)$ is finite.
\end{proof}

Now we exhibit a particular case where the hypothesis of Proposition~\ref{ifcurve} is satisfied. Denote by $K$ the field $\Q(\theta_1,\theta_2)$ and suppose $[K:\Q]=3$ and $\theta_1,\theta_2$ are both irrational. Therefore, we have $K=\Q(\theta_1)=\Q(\theta_2)$, and there exist rational numbers $a,b,c,d,e,f$ satisfying
$$\theta_1^3+a\theta_1^2+b\theta_1+c=0\quad\text{and}\quad \theta_2=d\theta_1^2+e\theta_1+f.$$
If we substitute the expression of $\theta_1,\theta_2$ in Equation~\eqref{eqcircle} we obtain an equation of the form
\begin{equation}\label{eqcircle2}
g_2(m_1,m_2,n_1,n_2)\theta_1^2+g_1(m_1,m_2,n_1,n_2)\theta_1+g_0(m_1,m_2,n_1,n_2)=0,
\end{equation}
    where $g_0,g_1,g_2$ are polynomials of degree 2 whose coefficients depend on $a,b,c,d,e,f$. A solution to Equation~\ref{eqcircle2} corresponds to a solution of the system
    \begin{equation*}
        \begin{cases}
            g_0(m_1,m_2,n_1,n_2)=0\\
            g_1(m_1,m_2,n_1,n_2)=0\\
            g_2(m_1,m_2,n_1,n_2)=0
        \end{cases}
    \end{equation*}
    which describes the intersection of three quadrics in $\A^4$ with $\Z^4$. We can show that the three quadrics are linearly independent for any choice of $a,b,c,d,e,f$. The explicit expressions for $g_0,g_1,g_2$ are the following:
\allowdisplaybreaks
\begin{align*}
g_0(m_1,m_2,n_1,n_2)&= m_{1}^{2}( a^{3} c d^{2}-2 a b c d^{2}+ c^{2} d^{2} + 2 b c d e + a c e^{2} -2 a^{2} c d e+ 2 a c d f-2 c e f) \\
&+ m_{2}^{2}( a^{3} c d^{2} -2 a b c d^{2} + a c d^{2}+ 2 a c d f -2 a^{2} c d e+ c^{2} d^{2}+ 2 b c d e + a c e^{2} -2c d e\\ 
&-2 c e f+f^2)+ 2 m_{1} m_{2}( -a^{3} c d^{2}+ 2a b c d^{2}+2a^{2} c d e - c^{2} d^{2}-2 b c d e -2 a c e^{2}\\
&- 2 a c d f+ 2 c d  + 4 c e f)+ n_{1}^{2}( a c d^{2} -2 c d e+f^2+1)+n_2^2+2 n_{1} n_{2} (c d-1)\\
&+ m_{1} n_{1} ( -4 a c d e -2b c d^{2}+2 a^{2} c d^{2}+ 2 c e^{2}+ 4 c d f)+ 2 m_{1} n_{2}( a c d - c e)\\
&+ 2 n_{1} m_{2}(2 a c d e -a^{2} c d^{2}+ b c d^{2}-c e^{2} -2 c d f+f) + 2 m_{2} n_{2}( c e-acd-f) \\
&- f^{2}  -  a c d^{2} -2 c d + 2 c d e \\
g_1(m_1,m_2,n_1,n_2)&= m_{1}^{2}( a^{3} b d^{2}-2 a b^{2} d^{2}- a^{2} c d^{2}-2a^{2} b d e + 2 b c d^{2}+ 2 b^{2} d e + 2 a c d e + a b e^{2}+ 2 a b d f \\
&-  c e^{2} -2 c d f   -2 b e f) + m_{2}^{2}( a^{3} b d^{2} -2 a b^{2} d^{2}-a^{2} c d^{2} -2 a^{2} b d e + a b d^{2}+ 2 b c d^{2}\\ 
&+ 2 a b d f+ 2 b^{2} d e + 2 a c d e +a b e^{2}-  c d^{2} -2 b e f -2 b d e -  c e^{2}  + 2 e f -2 c d f)\\
&+ m_{1} m_{2} (4a b^{2} d^{2}-2 a^{3} b d^{2}+2 a^{2} c d^{2}+ 4 a^{2} b d e -4 b c d^{2} -4 b^{2} d e -4a c d e -2 a b e^{2} \\
& -4a b d f+ 2 c e^{2}+ 2 b d + 4 c d f -2 f + 4 b e f) + n_{1}^{2}( a b d^{2}- c d^{2} -2 b d e+2ef)\\
& +  n_{1} n_{2}( -2 f + 2 b d) +m_{1} n_{1}( -2 b^{2} d^{2}-4a b d e+ 2 a^{2} b d^{2}  -2 a c d^{2} + 4 c d e+ 2 b e^{2}   \\
& + 4 b d f -2 f^{2}-2)+ 2 m_{1} n_{2}( a b d -cd - b e+1 )   \\
& + n_{1} m_{2} (2b^{2} d^{2}+ 4 a b d e + 2a c d^{2} -4 c d e -2 a^{2} b d^{2} -2 b e^{2} -4b d f + 2 f^{2} +2e)   \\
& +  m_{2} n_{2}(-2 a b d+2cd+2be -2e)-2 e f + 2 f - a b d^{2} + c d^{2}+ 2 b d e -2 b d\\
g_2(m_1,m_2,n_1,n_2)&=m_{1}^{2}( a^{4} d^{2} -3 a^{2} b d^{2} -2 a^{3} d e + 2 a c d^{2} + b^{2} d^{2} + 4 a b d e +  a^{2} e^{2} + 2a^{2} d f -2 c d e\\
&- b e^{2} -2  b d f -2 a e f   +f^{2}+1)+m_{2}^{2}( a^{4} d^{2} -3 a^{2} b d^{2} -2 a^{3} d e +b^{2} d^{2}+ a^{2} d^{2}\\
&+ 2 a c d^{2} + a^{2} e^{2}+ 4  a b d e - b d^{2} + 2 a^{2} d f -2 a d e+ -2 c d e-  b e^{2} -2 b d f -2 a e f \\
&+ e^{2} + 2 d f + f^{2} )+ m_{1} m_{2} (6a^{2} b d^{2} -2 a^{4} d^{2}  + 4 a^{3} d e - 2 b^{2} d^{2} -4 a c d^{2} -8 a b d e\\
&-2 a^{2} e^{2}-4 a^{2} d f+ 4 c d e + 2 b e^{2} + 4  b d f + 4 a e f + 2 a d+ -2 f^{2} -2 e)\\
&+ n_{1}^{2}( a^{2} d^{2} -  b d^{2} -2 a d e + e^{2} + 2 d f )+ n_{1} n_{2}(2 a d-2 e)+ n_{2}^{2}\\
&+m_{1} n_{1}( -4a b d^{2} - 4 a^{2} d e + 2 a^{3} d^{2} + 2  c d^{2}  + 4  b d e + 2 a e^{2}+ 4 a d f - 4 e f )    \\
&+  n_{1} m_{2}( 4a^{2} d e+ 4 a b d^{2}-2 a^{3} d^{2} -2 c d^{2} -4 b d e -4 a d f -2 a e^{2} + 4 e f+ 2 d )\\
&+  2 m_{1} n_{2}( a^{2} d -b d - a e + f )+ m_{2} n_{2} (-2 a^{2} d + 2 b d+ 2a e -2 d -2 f) \\
& - e^{2}  -2 d f   + 2 e - 1 - a^{2} d^{2} + 2 a d e + b d^{2} -2 a d. 
\end{align*}
We consider the matrix $M\in \Z[a,b,c,d,e,f]^{3\times11}$ whose three rows are the vectors of coefficients of $g_0,g_1,g_2$ with respect to the monomials $m_1^2,m_2^2,m_1m_2,n_1^2,n_2^2,n_1n_2, m_1n_1, m_2n_1,m_1n_2,m_2n_2$. For fixed $a,c,b,d,e,f\in\Q$, the three quadrics $Q_0,Q_1,Q_2$ defined respectively by
\begin{align*}
&g_0(a,b,c,d,e,f,m_1,m_2,n_1,n_2)=0\\
&g_1(a,b,c,d,e,f,m_1,m_2,n_1,n_2)=0\\
&g_2(a,b,c,d,e,f,m_1,m_2,n_1,n_2)=0
\end{align*}
are linearly dependent if and only if all the $3\times3$ submatrices of $M$ have determinant equal to zero. The minimal Gr\"obner basis for the ideal generated by these determinants, computed using SageMath~\cite{sagecode}, is as follows:
\begin{align*}
     \{c^4f^2 - 2c^3f + c^2f^2 + 2cf, a - c, b - 1, d - f, e - 1\}.
\end{align*}
Hence, if the quadrics are linearly dependent, $\theta_1$ satisfies the equation
$$0=\theta_1^3+a\theta_1^2+\theta_1+a=(\theta_1+a)(\theta_1^2+1),$$
that leads to a contradiction since $[\Q(\theta_1):\Q]=3$.

Now we consider the pencil described by the equation $g_0+\lambda g_1=0$. By \cite[Proposition 2.1]{Reid1972}, the variety $Q_0\cap Q_1$ is non-singular and of codimension 2 if and only if the polynomial $p(\lambda)=\det(A_0+\lambda A_1)$ is non-constant and has 5 distinct roots. Since $p$ is nonzero as a polynomial in $a,b,c,d,e,f,\lambda$, the values of $a,b,c,d,e,f$ for which $Q_0\cap Q_1$ is non-singular and of codimension 2 are Zariski-dense in $\Q^6$. For such values, the intersection of $Q_0, Q_1$ and $Q_2$ is a curve, and we can apply Proposition~\ref{ifcurve}.   

For $[Q(\theta_1,\theta_2):\Q]>3$, by dimensional consideration one should expect that the variety $V$ has dimension 1. Then, the conjecture should follow from Proposition~\ref{ifcurve}.

\section{The general case}\label{Secgen}
In this section, we move to general curves. The idea for studying Question~\ref{QuestionTorus} in the case of curves of high degree is to generalize the proof of Theorem~\ref{Twolinesandhyperbolic}. In that case, the triple intersections correspond to integral points, and we get the finiteness result by applying Siegel's theorem. If the curves have higher degrees, then the triple intersections turn out to be integral points of degree $d>1$ over the field of definition of the curves. The curves that potentially contain infinitely many integral points of degree $d$ are classified by the following theorem, which generalizes Siegel's theorem.

\begin{thm}[Levin, {\cite[Theorem 1.3]{Levin_2016}}]\label{Levin}
    Let $\cC\subset \A^n$ be a nonsingular affine curve defined over a number field $K$. Let $\widetilde{\cC}$ be a nonsingular projective completion of $\cC$ and let 
    $$(\widetilde{\cC}\setminus \cC)(\overline{K})=\{P_1,\dots,P_q\}.$$
    Let $d$ be a positive integer. Then there exists a finite extension $L$ of $K$ and a finite set of places $S$ of $L$ such that the set 
    $$\left\{P\in \cC(\overline{R_{L,S}})\mid [L(P):L]\leq d\right\}$$
    is infinite if and only if there exists a morphism $\phi\colon \widetilde{\cC}\rightarrow \pr^1$ over $\overline{K}$ with $\deg\phi\leq d$ and $\phi(\{P_1,\dots P_q\})\subset\{0,\infty\}$. 
\end{thm}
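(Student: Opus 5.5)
The statement is Levin's theorem, quoted from \cite{Levin_2016}, so this is only a plan for how one could prove it; the \virg{only if} direction rests on deep Diophantine input and the step from it to a morphism is not worked out here. I would treat the two implications separately. The \virg{if} direction is a construction: pull back integral points of $\mathbb{G}_m$ along $\phi$. The \virg{only if} direction is the hard finiteness statement: view points of degree $\leq d$ as rational points of the symmetric power $\mathrm{Sym}^d\widetilde{\cC}$ and apply the Schmidt subspace theorem.

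For the \virg{if} direction, let $\phi\colon\widetilde{\cC}\to\pr^1$ have degree $\leq d$ and satisfy $\phi(\{P_1,\dots,P_q\})\subset\{0,\infty\}$. Then $U:=\phi^{-1}(\pr^1\setminus\{0,\infty\})$ is contained in $\cC$, and $\phi|_U\colon U\to\mathbb{G}_m$ is finite of degree $\leq d$.
\begin{compactenum}[1.]
\item Choose a finite extension $L$ of $K$ over which $\phi$ and the embedding $\cC\subset\A^n$ are defined. Enlarge $S$ so that $R_{L,S}^*$ has positive rank, hence $\mathbb{G}_m(R_{L,S})=R_{L,S}^*$ is infinite.
\item Enlarging $S$ further, arrange that each coordinate function $x_j$ of $\cC$, restricted to $U$, is integral over $R_{L,S}[\phi,\phi^{-1}]$. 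This holds because $U$ is finite over $\mathbb{G}_m$, and only finitely many denominators appear.
\item For each unit $u\in R_{L,S}^*$, every point $P\in\phi^{-1}(u)$ has $[L(P):L]\leq d$. Its coordinates satisfy monic equations with coefficients in $R_{L,S}$, so $P\in\cC(\overline{R_{L,S}})$.
\item Distinct units give disjoint fibres, so the set in the statement is infinite.
\end{compactenum}

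For the \virg{only if} direction, a point $P$ of degree $\leq d$ over $L$ determines, through its Galois conjugates, an $L$-rational point of $\mathrm{Sym}^{d'}\widetilde{\cC}$ for some $d'\leq d$. Integrality of $P$ means this point is $S$-integral with respect to the divisor $D$ of $d'$-tuples meeting $\{P_1,\dots,P_q\}$. The argument would then go as follows.
\begin{compactenum}[1.]
\item Take a large multiple of $\sum_i P_i$ and consider the Riemann--Roch spaces $L(N\sum_i P_i)$. Symmetric functions of elements of these spaces give regular functions on $\mathrm{Sym}^{d'}\widetilde{\cC}\setminus D$.
\item Evaluated at the integral points, these functions are $S$-integral algebraic numbers. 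Apply the Schmidt subspace theorem, as in the Corvaja--Zannier approach to Siegel's theorem, with filtrations by order of vanishing at the $P_i$.
\item Conclude that, unless $q$ is small relative to $d$, the infinitely many integral points satisfy a nontrivial linear relation. So they are contained in a proper subvariety of $\mathrm{Sym}^{d'}\widetilde{\cC}$.
\item Induct on the dimension of that subvariety. The aim is to show that an infinite degenerate family must lie in the fibres of a map $\phi\colon\widetilde{\cC}\to\pr^1$ of degree $\leq d$, along which the integral points become $S$-unit points of $\mathbb{G}_m$, or integral points of $\A^1$.
\end{compactenum}

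The main obstacle is the last step: turning \virg{degenerate} into the existence of $\phi$ with $\phi(\{P_1,\dots,P_q\})\subset\{0,\infty\}$. I would first try to show that every infinite family of integral points is dense in a curve inside $\mathrm{Sym}^{d}\widetilde{\cC}$ that is isomorphic to a pencil $g_d^1$, with $\phi$ the map defining the pencil. Siegel's Theorem~\ref{SiegelThm} applied to that curve would then force its boundary to have at most two points. Those points would be the images of $P_1,\dots,P_q$, and after a Möbius change of coordinates they can be taken to be $\{0,\infty\}$. The case of a one-point boundary $\A^1$ is absorbed by allowing $\phi$ to send all $P_i$ to $\infty$.
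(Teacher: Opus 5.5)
The \virg{if} direction of your proposal is correct. Pulling back $R_{L,S}^*$ along the finite map $\phi|_U\colon U\to\mathbb{G}_m$ (with $\phi$ nonconstant), after enlarging $S$ so that the coordinates of $\cC$ are integral over $R_{L,S}[\phi,\phi^{-1}]$, gives infinitely many integral points of degree $\le d$.

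The \virg{only if} direction, however, is not proved, and the route you sketch cannot be completed as it stands. The paper gives no proof to compare with: it quotes Levin, and Remark~\ref{remsymprod} only mentions an intermediate step of his argument.

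The subspace theorem on $\mathrm{Sym}^{d'}\widetilde{\cC}$ with boundary $D=\sum_i\bigl(P_i+\mathrm{Sym}^{d'-1}\widetilde{\cC}\bigr)$ yields degeneracy or finiteness only when the number $q$ of boundary components is large compared with $d'$. This is the $q>2d$ situation mentioned after Proposition~\ref{conLevin}. Your Step~3 sets the small-$q$ case aside, but that is where the theorem has its content:
\begin{compactenum}[(1)]
\item Already for $d=1$, genus $\ge1$ and $q\le 2$, it is the hard case of Siegel's theorem. Corvaja--Zannier reach that case only by passing to unramified covers via Chevalley--Weil.
\item For $d'\ge 2$ this trick is unavailable. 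The fundamental group of $\mathrm{Sym}^{d'}\cC$ is the abelian group $H_1(\cC,\Z)$, and it is already generated by loops near each boundary component $P_i+\mathrm{Sym}^{d'-1}\widetilde{\cC}$. Hence no \'etale cover splits these components.
\item Your \virg{induction on the dimension} of the degenerate subvariety has nothing to run on. That subvariety is no longer a symmetric power, and you have no control over how $D$ meets it.
\end{compactenum}
What is missing is an input of Faltings--Vojta type, which is the tool Levin's proof rests on. The Abel--Jacobi map sends $\mathrm{Sym}^{d'}(\widetilde{\cC}\setminus\{P_1,\dots,P_q\})$ into the generalized Jacobian $J_{\mathfrak m}$, with $\mathfrak m=P_1+\dots+P_q$. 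This is a semiabelian variety, an extension of the Jacobian by $\mathbb{G}_m^{q-1}$, and integral points go to integral points. By Vojta's theorem, integral points on a closed subvariety of a semiabelian variety lie in finitely many translates of semiabelian subvarieties contained in it.

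Your proposed endgame is also wrong as stated. Infinite families need not be dense in a curve: for $\cC=\A^1$, the integral points of degree $\le d$ (roots of monic polynomials over $R_{L,S}$) are Zariski dense in $\mathrm{Sym}^d\pr^1\cong\pr^d$. The weaker claim, that some infinite subfamily lies in a pencil, is essentially equivalent to the theorem itself, so the plan stops where the work begins.

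Once Vojta's theorem is available, $\phi$ comes out of the structure of $J_{\mathfrak m}$, roughly in three cases.
\begin{compactenum}[(1)]
\item The fibre of $\mathrm{Sym}^{d'}(\widetilde{\cC}\setminus\{P_i\})\to J_{\mathfrak m}$ through $E$ is $\{E+\operatorname{div}f : f\in L(E),\ f(P_1)=\dots=f(P_q)\neq 0\}$, which is an affine space. If it is positive-dimensional, take a nonconstant such $f$ and subtract its common value at the $P_i$. This gives a map of degree $\le d$ sending every $P_i$ to $0$.
\item Suppose the images of the integral points are dense in a translate $x+B$ whose toric part contains a cocharacter $t\mapsto(t^{a_1},\dots,t^{a_q})$ with the $a_i$ not all equal. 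Then for $E$ over a general point of $x+B$ and infinitely many $t$, there is $f_t\in L(E)$ with $(f_t(P_i))_i$ proportional to $(t^{a_i})_i$. A Vandermonde argument shows that the evaluation map $f\mapsto (f(P_i))_i$ on $L(E)$ hits the indicator vector of a proper nonempty set $I$ of indices. This gives $g\in L(E)$ with $g(P_i)=1$ for $i\in I$ and $g(P_i)=0$ otherwise. Then $\phi=g/(g-1)$ has degree $\le d$ and sends every $P_i$ to $0$ or $\infty$.
\item If $B$ is abelian and the fibres are points, Zariski's main theorem makes a component of the (affine) preimage of $x+B$ an affine open subset of $x+B$. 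Its complement is an ample divisor, and Faltings' theorem leaves only finitely many integral points.
\end{compactenum}
None of this appears in your plan. The first two cases are exactly where the condition $\phi(\{P_1,\dots,P_q\})\subset\{0,\infty\}$ comes from.
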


The following proposition follows directly from the previous theorem.

\begin{pro}\label{conLevin}
    Let $\cC_1, \cC_2$, and $\cC_3$ be three affine plane curves defined over a number field $K$, and let $d$ be an integer satisfying $\deg (\cC_1)\deg (\cC_2)\leq d$. Suppose that the three curves lie in different orbits under the action of $\Z^2$ over $\R^2$. Let $\widetilde{\cC_3}$ be a nonsingular completion of $\cC_3$ and and let 
    $$(\widetilde{\cC_3}\setminus \cC_3)(\overline{K})=\{P_1,\dots,P_q\}.$$ 
    Suppose that does not exist a morphism $\phi\colon \widetilde{\cC_3}\rightarrow \pr^1$ over $\overline{K}$ satisfying $\phi(\{P_1,\dots P_q\})\subset\{0,\infty\}$ and $\deg\phi\leq d$. Then $\cC_1, \cC_2$ and $\cC_3$ have only finitely many triple intersections on the real torus.
\end{pro}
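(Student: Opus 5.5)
We will mimic the proof of Theorem~\ref{Twolinesandhyperbolic}, replacing Siegel's theorem by Levin's Theorem~\ref{Levin}. Let $f_1,f_2\in K[x,y]$ be the defining polynomials of $\cC_1,\cC_2$, of degrees $d_1,d_2$ with $d_1d_2\leq d$. We may enlarge $K$ and take a finite set of places $S$ of $K$ such that all coefficients of $f_1,f_2$, and of the equation of $\cC_3$, lie in the ring $R_{K,S}$ of $S$-integers. We may also assume that the leading forms of $f_1$ and $f_2$ (in $x,y$) have unit resultant in $R_{K,S}$, after adding finitely many places.

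\textbf{Step 1: triple intersections are integral points of bounded degree on $\cC_3$.} A triple intersection on the torus comes from a point $P\in\cC_3(\R)$ with $P+v_1\in\cC_1$ and $P+v_2\in\cC_2$ for some $v_1,v_2\in\Z^2$. Then $P$ lies in $\tau_{-v_1}(\cC_1)\cap\tau_{-v_2}(\cC_2)$. These two translated curves are defined over $R_{K,S}$, have the same leading forms as $\cC_1,\cC_2$, and have no common component: $\cC_1$ and $\cC_2$ lie in different $\Z^2$-orbits, and we treat reducible curves componentwise. By Bezout's theorem $P$ is therefore one of at most $d_1d_2\leq d$ points, so $[K(P):K]\leq d$.

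For integrality we use the resultant. Eliminating $y$ gives $\mathrm{Res}_y(f_1(x+a,y+b),f_2(x+c,y+e))$, whose leading coefficient in $x$ is the resultant of the leading forms, a unit in $R_{K,S}$ (after a generic linear change of coordinates over $K$ making the leading coefficients in $y$ units as well). Hence the $x$-coordinate of $P$ is a root of a monic polynomial over $R_{K,S}$, and symmetrically so is the $y$-coordinate. So $P\in\cC_3(\overline{R_{K,S}})$ with $[K(P):K]\leq d$.

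\textbf{Step 2: apply Levin.} Pass to a nonsingular model: the normalization of $\cC_3$ differs from $\cC_3$ only over finitely many singular points, and integrality is preserved after enlarging $S$. Theorem~\ref{Levin} is stated for some $L,S$, but the set of integral points of degree $\leq d$ only grows when we enlarge $L$ and $S$. Our points are $S'$-integral over $L$, of degree $\leq d$ over $L$, for $L\supseteq K$ and $S'$ the places above $S$ together with those required by the theorem. By the non-existence of the morphism $\phi$, this set is finite. Hence only finitely many points $P\in\cC_3$ occur, and therefore only finitely many triple intersections on the torus.

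\textbf{Main obstacle.} The delicate point is the integrality in Step 1, uniformly in $v_1,v_2$: we must ensure that the translates' resultant stays monic with a unit leading coefficient. The key fact is that translation does not change the top-degree forms, so a single coordinate change and a single finite enlargement of $S$ work for all $(v_1,v_2)$ at once. Here we need $\cC_1,\cC_2$ to have no common asymptotic direction with multiplicity, i.e. coprime leading forms. If that fails, we would first argue that common points at infinity produce intersections only along a bounded set of directions, and remove these by a further change of coordinates.
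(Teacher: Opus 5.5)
Your argument has the same shape as the paper's proof: a degree bound from Bezout plus Galois-stability of $\tau_{-v_1}(\cC_1)\cap\tau_{-v_2}(\cC_2)$, then Levin. Your resultant computation in Step~1 correctly proves uniform $S$-integrality when the leading forms of $f_1,f_2$ are coprime, a step the paper only asserts. The genuine gap is the last sentence of your \emph{Main obstacle} paragraph. When $\cC_1,\cC_2$ share a point at infinity, integrality is not a matter of directions or coordinates: it simply fails. For example, take $K=\Q(\sqrt2)$, $\cC_1\colon y=x^2$ and $\cC_2\colon y=x^2+\sqrt2x$. The latter is a non-integral translate of $\cC_1$, so the two lie in different $\Z^2$-orbits. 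The translate $y+b=(x+a)^2$ meets $\cC_2$ only at $x=(a^2-b)/(\sqrt2-2a)$. So the leading coefficient of your resultant is $\sqrt2-2a$, which depends on the translation. For $b=a^2+1$ this point has $N_{K/\Q}(x)=1/(4a^2-2)$, so no fixed $S$ makes all such points $S$-integral.

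In fact the proposition is false in this situation, so the gap cannot be closed. Take
\[
\cC_3\colon\ x\bigl(2(y-x^2-\sqrt2x)^8-\sqrt2\bigr)=1 .
\]
The automorphism $(x,y)\mapsto(x,s)$ with $s=y-x^2-\sqrt2x$ identifies $\cC_3$ with $\A^1\setminus\{2s^8=\sqrt2\}$. Hence $\widetilde{\cC_3}=\pr^1$ with $9$ points at infinity. A nonconstant $\phi$ of degree $\le 4$ has at most $8$ points over $\{0,\infty\}$, so the hypothesis holds with $d=4$. For integers $s\ge 2$ let $x_s=1/(2s^8-\sqrt2)$ and $P_s=(x_s,\,x_s^2+\sqrt2x_s+s)\in\cC_3$. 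Then $P_s-(0,s)\in\cC_2$. Since $2s^8x_s=1+\sqrt2x_s$, one checks $P_s+(s^8,\,s^{16}-s+1)\in\cC_1$. The $x_s$ are distinct numbers in $(0,1)$, so the $\pi(P_s)$ are infinitely many triple intersections on the torus.

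An extra hypothesis is therefore needed, e.g.\ that the projective closures of $\cC_1,\cC_2$ have no common point at infinity. Under it your proof is complete, and your \virg{componentwise} remark becomes unnecessary, since translates then cannot share a component. Without that hypothesis a shared component also breaks the statement: take $\cC_i=L\cup L_i$ with $L$ of irrational slope and $L_1,L_2$ non-parallel lines, together with $\cC_3\colon x^{10}+y^{10}=1$.

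Finally, your monotonicity remark on Levin's theorem runs the wrong way: finiteness for one $(L,S)$ would not pass to a larger set. It is not needed, though. When no such $\phi$ exists, Levin's theorem gives finiteness for every $L$ and every finite $S$, so you may take $L=K$ directly.
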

\begin{proof}
     Let $\cC_1', \cC_2'$ be obtained by translating $\cC_1$ and $\cC_2$ respectively by an integer vector such that there exists a point of triple intersection $P\in \cC_1'\cap \cC_2'\cap \cC_3$. Then, since $P\in \cC_1'\cap \cC_2'$, $P$ has degree at most $d$ over $K$. Indeed, the action of $\mathrm{Gal(\overline{K}/K)}$ preserves the set $I:=\cC_1'(\overline{K})\cap \cC_2'(\overline{K})$, then all the conjugates of $P$ belong to $I$. Since, from Bezout's theorem, $I$ has cardinality at most $\deg (\cC_1)\deg (\cC_2)$, the point $P$ has degree at most $d$ over $K$. Moreover, we can choose a finite set $S$ of places of $K$ such that all the triple intersections of the three curves are $S$-integral. By Theorem~\ref{Levin}, $\cC_3$ has only finitely many $S$-integral points of degree less than or equal to $d$; therefore, there are only finitely many triple intersections on the real torus.
\end{proof}

The proposition shows that, if we fix $\cC_1$ and $\cC_2$, then for \virg{most of} the choices of $\cC_3$ the three curves have only finitely many triple intersections on the real torus. However, determining effectively whether a given curve has only finitely many integral points of degree $d$ is in general not easy. If the curve has $q>2d$ points at infinity, we can directly apply Levin's theorem, since there are no morphisms of degree less than or equal to $d$ that map a set of $q$ points to two points. In the other cases, we have to compute the gonality of the curve, but there are no efficient algorithms that solve this task.

\begin{oss}\label{remconLevin}
    Many examples that show that the hypothesis on the gonality of $\cC_3$ cannot be removed will be given in the following paragraph, where we specialize to the case of $\cC_1$ being a line. However, it should be possible to extend Theorem~\ref{conLevin} to a more general class of curves. For instance, the theorem cannot be applied to the case of two conics $\cC_1,\cC_2$ and a curve $\cC_3$ of degree 4 that has infinitely many integral points of degree 4. Heuristically, one should expect that only finitely many of them can be obtained as the intersection of a translate of $\cC_1$ with a translate of $\cC_2$.
\end{oss} 

Before moving to the case of one line and two curves of higher degree, we exhibit the following examples of three irreducible conics with infinitely many triple intersections on the real torus, which are of great interest because the intersections are explained with the action of an endomorphism of the torus that preserves the three curves. 

\subsection{Three conics with infinitely many triple intersections on the real torus.}

Let $\mathcal{P}$ be a pencil of conics such that there exists an automorphism of $\A^2$ represented by a matrix $T\in\mathrm{GL}(3,\Z)$ of infinite order satisfying $T(\cC)=\cC$ for each $C\in \mathcal{P}$. We remark that if the conics in $\mathcal{P}$ do not have a common component, then $\mathcal{P}$ has at most two base points. Let $\cC_1\in \mathcal{P}$ be any element of $\mathcal{P}$ and $P_1\in \cC_1$ be an irrational point. Given two integer vectors $v_2,v_3\in\Z^2$, we define the points $P_2:=P_1+v_2$ and $P_3:= P_1+v_3$, where a point of the plane is identified as a vector $(\alpha,\beta,1)^t\in\R^3$ and the translation vector as $v=(v^1,v^2,0)^t\in\R^3$. Let $\cC_2,\cC_3\in\mathcal{P}$ be the conics containing $P_2$ and $P_3$ respectively. Then the set 
$$\left\{T^n(P)\mid n\in\Z \right\}$$
is an infinite set of points of triple intersection on the torus. Indeed, 
$$T^n(P_1)-T^n(P_i)=T^n(v_i)\in\Z^2\quad \text{for }i=1,2\text{ and }n\in\Z.$$  

Now we construct some explicit examples.

\begin{es}[Hyperbolas]\label{threehyperbolas}
    We consider the pencil of hyperbolas
    $$\mathcal{P}:=\left\{ x^2-2y^2=c\mid c\in \R\right\}$$
    and the automorphism 
    $$T:=\begin{pmatrix}
        3 & 4 & 0\\
        2 & 3 & 0\\
        0 & 0 & 1
    \end{pmatrix}.$$
    We can easily check that $T(\cC)=\cC$  for each $\cC\in\mathcal{P}$. We take $\cC_1$ as the conic defined by $x^2-2y^2=1$, $P_1=:(\sqrt{3},1)\in \cC_1$, $P_2=(\sqrt{3},2)$ and $P_3=(\sqrt{3}+1, 1).$ As in the general construction, we consider the following conics of $\mathcal{P}$: 
    $$\cC_2: x^2-2y^2=-1\quad\text{and}\quad \cC_3: x^2-2y^2=2+2\sqrt{3}.$$
    Then $\cC_1, \cC_2, \cC_3$ have infinitely many points of triple intersection on the torus. 

    Now, let's take $\cC_1$ as the degenerate conic $x^2-2y^2=0$. The same argument provides us an example consisting of two hyperbolas and one line, since one between the two components of $\cC_1$, that is, $x=\sqrt{2}y$ and $x=-\sqrt{2}y$ must contain an infinite number of points of triple intersection. 
\end{es}

\begin{es}[Parabolas]\label{threeparabolas}
    We fix $a\in\R$, and consider the pencil of parabolas
    $$\mathcal{P}:=\left\{ ax^2+2y=c\mid c\in \R\right\}$$
    and the automorphism 
    $$T:=\begin{pmatrix}
        1 & 0 & -\frac{\lambda}{a}\\
        \lambda & 1 & -\frac{\lambda^2}{2a}\\
        0 & 0 & 1
    \end{pmatrix},$$
where $\lambda$ is a real parameter. We can easily check that $T(\cC)=\cC$  for each $C\in\mathcal{P}$. The matrix $T$ has integer coefficients if $a\in\Q^*$ and $\lambda\in 2a\Z\cap \Z$. For example, we can choose 
    $$a=\frac{1}{3},\quad \lambda=2,\quad \cC_1: 2y+\frac{1}{3}x^2=1,\quad P_1=(\sqrt{15},-2)$$ 
    and then construct two parabolas $\cC_2, \cC_3\in\mathcal{P}$ such that $\cC_1, \cC_2, \cC_3$ have an infinite number of points of triple intersection on the torus.
\end{es}

We remark that these points are obtained by the action of the automorphism $T$, similarly to Theorem~\ref{Homotheties}. Then, we can ask the following question.
\begin{que}
    Are there infinitely many orbits for the action of $\Gamma:=\langle \Z^2, T\rangle$ that intersect the three conics? 
\end{que}

\subsection{One line and two curves of higher degree}\label{seconeline}

Assuming that exactly one of the three curves is a line, we obtain the following stronger result.

\begin{pro}\label{lineandtwocurves}
    Let $\ell$ be a line and $\cC_1, \cC_2$ be two irreducible curves of degree $d_1< d_2$ respectively, all defined over a number field $K$. Suppose $d_2\geq 3$, let $\widetilde{\cC_1},\widetilde{\cC_2}$ be the projective completions of $\cC_1, \cC_2$ respectively and define $$(\widetilde{\cC_1}\setminus \cC_1)(\overline{K})=\{P_1,\dots,P_r\},\qquad (\widetilde{\cC_2}\setminus \cC_2)(\overline{K})=\{Q_1,\dots,Q_s\}.$$  If one of the following conditions hold, then $\ell, \cC_1, \cC_2$ have only finitely many triple intersections on the torus. 
    \begin{compactenum}
        \item $\widetilde{\cC_2}$ does not admit a morphism $\phi\colon \widetilde{\cC_2}\rightarrow \pr^1$ over $\overline{K}$ satisfying $\phi(\{Q_1,\dots Q_s\})\subset\{0,\infty\}$ and $\deg\phi\leq d_1$.
        \item The projective completion of $\ell$ does not meet neither $\widetilde{\cC_1}$ or $\widetilde{\cC_2}$, and there exists an integer 
        $$0\leq c \leq 2d_1-d_2-1$$ 
        such that $\widetilde{\cC_1}$ does not admit a morphism $\phi\colon \widetilde{\cC_1}\rightarrow \pr^1$ over $\overline{K}$ satisfying $\phi(\{P_1,\dots P_r\})\subset\{0,\infty\}$ and $\deg\phi\leq c$, and $\widetilde{\cC_2}$ does not admit a morphism $\phi\colon \widetilde{\cC_2}\rightarrow \pr^1$ over $\overline{K}$ satisfying $\deg\phi\leq d_1-1-c$ and $\phi(\{Q_1,\dots Q_s\})\subset\{0,\infty\}$.
    \end{compactenum}
\end{pro}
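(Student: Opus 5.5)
We reduce, as in Proposition~\ref{conLevin}, to counting $S$-integral points of bounded degree on a curve, and then invoke Levin's Theorem~\ref{Levin}. After enlarging $K$ we may assume the line is $\ell\colon y=0$ or that it is given by an equation over $K$; a triple intersection is then given by a point $P$ with $P\in\tau_{v_1}(\ell)$, $P+w\in\cC_1$ and $P+w'\in\cC_2$ for integral vectors. The point is to bound the degree over $K$ of the relevant point on one of the curves more sharply than the Bezout bound $d_1d_2$ used in Proposition~\ref{conLevin}, by exploiting that one of the three curves is a line.

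\emph{Case 1.} Translate so that the triple intersection is $Q\in\cC_2\cap\tau_u(\cC_1)\cap\tau_v(\ell)$. The intersection $\tau_u(\cC_1)\cap\tau_v(\ell)$ consists of at most $d_1$ points (Bezout with a line; if $\ell$ is a component we discard this case since $\cC_1$ is irreducible of degree $\geq 1$ distinct from a line, or use $d_1<d_2$). Since this set is Galois-stable over $K$, $Q$ has degree at most $d_1$ over $K$. Choosing a finite set $S$ of places such that the equations of $\ell,\cC_1$ have $S$-integral coefficients and the leading coefficient in the restriction of $\cC_1$ to the line is an $S$-unit, every such $Q$ is $\overline{R_{K,S}}$-integral on $\cC_2$. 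Condition~1 together with Theorem~\ref{Levin} gives finitely many such $Q$, hence finitely many triple intersections. This is the same argument as Proposition~\ref{conLevin}, only with $\deg\ell\cdot\deg\cC_1=d_1$.

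\emph{Case 2.} Here the idea is to split the degree budget. Parametrize $\ell$ (and its translates) linearly: restricted to $\tau_v(\ell)$, the curves $\tau_u(\cC_1)$ and $\tau_{u'}(\cC_2)$ become polynomials $f_u(t),g_{u'}(t)$ of degrees exactly $d_1,d_2$ (the hypothesis that $\overline{\ell}$ misses $\widetilde{\cC_1},\widetilde{\cC_2}$ at infinity guarantees no degree drop and that leading coefficients are constant, hence $S$-units). A triple intersection gives a common root $t_0$. Let $p$ be the minimal polynomial of $t_0$ over $K$, of degree $e$. If $e\leq c$, the point of $\cC_1$ corresponding to $t_0$ is integral of degree $\leq c$, and the hypothesis on $\widetilde{\cC_1}$ plus Theorem~\ref{Levin} gives finiteness. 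Otherwise $e\geq c+1$, and we want to show the point on $\cC_2$ has degree $\leq d_1-1-c$; the plan is to divide: since $p\mid f_u$ and $p\mid g_{u'}$, consider the cofactor $f_u/p$ of degree $d_1-e\leq d_1-1-c$ and compare $g_{u'}$ with $f_u$. Because the hypothesis $c\leq 2d_1-d_2-1$ means $d_2-d_1\leq d_1-1-c$, I would try to use the remainder $g_{u'}-h f_u$ (Euclidean division, $\deg h=d_2-d_1$) with integral translation data, obtaining a polynomial of low degree still vanishing at $t_0$; the roots of a suitable nonzero combination of degree $\leq d_1-1-c$ then give points of $\cC_2$ of degree $\leq d_1-1-c$, integral over $R_{K,S}$, to which Theorem~\ref{Levin} applies.

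The main obstacle is exactly this degree-reduction step in Case 2: one must make sure the low-degree polynomial produced is nonzero and that its root $t_0$ yields a point on $\cC_2$ (not merely on a line) of the claimed degree, while keeping integrality uniform in the translation vectors. I would first check the bookkeeping by treating the generic situation where $\gcd(f_u,g_{u'})=p$, and handle the degenerate case of identical translates via the assumption that the curves lie in different $\Z^2$-orbits.
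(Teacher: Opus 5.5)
Your Case~1 is the paper's argument for (a): Proposition~\ref{conLevin} with the Bezout bound $\deg\ell\cdot\deg\cC_1=d_1$. Case~2, however, has a genuine gap, and Euclidean division cannot close it. The remainder $g_{u'}-hf_u$ does vanish at $t_0$, but its other roots are in general roots of neither $g_{u'}$ nor $f_u$. They are points of the translated line lying on neither curve, so Theorem~\ref{Levin} says nothing about them. Nor can you show that the triple point itself has degree $\leq d_1-1-c$: the generic situation is $e=d_1$, and that is exactly the case needing a different idea. Finally, your plan never explains how finiteness of some auxiliary low-degree points would give finiteness of the triple intersections modulo $\Z^2$.

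What is missing is to use the complementary roots, which do lie on the curves, together with the fact that all translates of $\ell$ have the same direction. Since $p\mid f_u$ and $p\mid g_{u'}$, every root of $f_u/p$ (degree $d_1-e$) gives a point of the translate of $\cC_1$ on the same translated line. Likewise every root of $g_{u'}/p$ (degree $d_2-e$) gives such a point of the translate of $\cC_2$. These points are integral for the same $S$, of degree at most $d_1-e$, respectively $d_2-e$. Now split by $e$:
\begin{itemize}
\item If $e\leq d_1-1-c$, the triple point itself is a point of $\cC_2$ of degree $\leq d_1-1-c$.
\item If $d_1-c\leq e\leq d_1-1$, then $f_u/p$ is nonconstant and, after translating back, gives a point of $\cC_1$ of degree $\leq c$.
\item If $e=d_1$, then $g_{u'}/p$ is nonconstant because $d_2>d_1$. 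It gives a point of $\cC_2$ of degree $\leq d_2-d_1\leq d_1-1-c$, which is exactly where $c\leq 2d_1-d_2-1$ enters.
\end{itemize}
In the last two cases Theorem~\ref{Levin} puts the auxiliary point in a finite set. Translate everything so that the curve carrying the auxiliary point is untranslated. The line then has the fixed direction of $\ell$ and passes through a point of a finite set, so it is one of finitely many lines, each meeting that curve in finitely many points. Hence the triple intersections lie in finitely many $\Z^2$-orbits. This is the paper's argument; your extra case $e\leq c$ is correct but unnecessary.
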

\begin{proof}
    The case (a) follows from Proposition~\ref{conLevin}. (b) Let $\ell', \cC_1'$ be obtained by translating $\ell$ and $\cC_1$ respectively by an integer vector such that there exists a point of triple intersection $P\in \ell'\cap \cC_1'\cap \cC_2$. As in the proof of Proposition~\ref{conLevin}, we can choose a finite set $S$ of places of $K$ such that $P$ is $S$-integral. Since $P\in \ell'\cap \cC_1'\cap \cC_2$, it has degree $d_P\leq d_1$. From Levin's Theorem~\ref{Levin}, there are only finitely many points on $\cC_2$ of degree at most $d_1-c-1$. Therefore, we can suppose $d_P\geq d_1-c$. If $c<d_P\leq d_1-1$, then there exists a point $Q\in \ell\cap \cC_1$ of degree $d_Q\leq c$, and again from Levin's Theorem~\ref{Levin} there are only finitely many of these points. If $P$ has degree $d_1$, then $\ell$ contains a point $R\in \ell\cap \cC_2$ of degree at most $d_2-d_1\leq d_1-c-1$. Hence, if a line $\ell'$ contains a triple intersection, then it contains an affine point belonging to a finite set. It follows that a triple intersection must lie on a finite number of lines; then there are only finitely many triple intersections.  
\end{proof}

Now we show that there exist configurations with infinitely many triple intersections consisting of a line, a curve of genus zero, and a curve of arbitrary genus. We use the following corollary of Hilbert's Irreducibility Theorem~\ref{Hilbert}.

\begin{pro}\label{corHilbert}
    Let $k$ be an integer, and $f(x)\in\Z[x]$ be a polynomial which is not a $k$-th power of a polynomial. Then, there are infinitely many integers $n$ such that $f(n)$ is not a $k$-th power of an integer. 
\end{pro}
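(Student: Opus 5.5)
The plan is to apply Hilbert's Irreducibility Theorem~\ref{Hilbert} to the two-variable polynomial $F(X,Y):=Y^k-f(X)\in\Q[X,Y]$, with $X$ specialized to integers and $Y$ left free. We may assume $k\geq 2$: for $k=1$ every polynomial is a first power, so the hypothesis is never met. The argument has three steps.

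\textbf{Step 1: reduce to irreducibility of $F$.} We want $F$ irreducible in $\Q[X,Y]$, and the natural tool is Capelli's theorem. Over the field $\Q(X)$, the binomial $Y^k-f$ is irreducible if and only if both of the following hold: $f$ is not a $p$-th power in $\Q(X)$ for any prime $p\mid k$, and, when $4\mid k$, $f\notin -4\Q(X)^4$. Since $F$ is monic in $Y$, irreducibility over $\Q(X)$ then gives irreducibility in $\Q[X,Y]$ by Gauss's lemma. In general, however, the hypothesis only says $f$ is not a $k$-th power, not that it avoids every $p$-th power with $p\mid k$. So we first reduce to an irreducible binomial.

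\textbf{Step 2: choose the right exponent.} Let $e$ be the largest divisor of $k$ such that $f=\lambda g^e$ with $\lambda\in\Q^*$ and $g\in\Z[X]$ not a constant times a $p$-th power for any prime $p$; concretely, $e$ is the part of the gcd of the multiplicities of the irreducible factors of $f$ that divides $k$. Since $f$ is not a $k$-th power, either $e<k$, or $e=k$ and $\lambda$ is not a $k$-th power of a rational number.
\begin{itemize}
\item If $\lambda$ is not a $k$-th power in $\Q$, the constant-coefficient case is elementary. If $f(n)=\lambda g(n)^k=m^k$ with $g(n)\neq 0$, then $\lambda=(m/g(n))^k$ is a $k$-th power, a contradiction. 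Since $g$ has only finitely many zeros, all but finitely many $n$ work.
\item Otherwise $e<k$, and we pick a prime power $p^a$ with $p^a\mid k$ and $p^a\nmid e$. It suffices to show that $f(n)$ is not a $p^a$-th power for infinitely many $n$, because a $k$-th power is in particular a $p^a$-th power. Writing $f=\lambda g^e$ and replacing the exponent by $p^a$ reduces to a binomial $Y^{p}-h(X)$, where $h$ is not a $p$-th power in $\Q(X)$. This is the case where Capelli's theorem applies cleanly, since only the single prime $p$ is involved; the special $-4$ obstruction needs $4\mid$ exponent, and $Y^p$ avoids it.
\end{itemize}

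\textbf{Step 3: specialize.} Apply Theorem~\ref{Hilbert} with $r=s=1$ to the irreducible binomial from Step 2. This gives infinitely many integers $n$ for which the specialized binomial in $Y$ stays irreducible over $\Q$, and therefore has no rational root (when its degree exceeds $1$). Hence the relevant value, $f(n)$ or the auxiliary $h(n)$, is not a $p$-th power of a rational number, and in particular not of an integer.

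The main obstacle is the bookkeeping in Step 2: passing from "not a $k$-th power" to the irreducibility that Capelli's criterion requires, including correctly handling the constant $\lambda$ and the translation from $h(n)$ back to $f(n)$. If this proves messy, the fallback is to take $p$ prime and argue via multiplicities of irreducible factors directly: the factorization of $f$ over $\Q$ decides whether $Y^p-f$ is irreducible, and $f(n)$ is a $p$-th power at only finitely many points.
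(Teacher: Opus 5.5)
Your plan (specialize a binomial in $Y$ via Theorem~\ref{Hilbert}) is the right kind of argument. However, the second bullet of Step~2 carries all the content and is only asserted, and in the form you state it, it fails: a \emph{single} binomial $Y^p-h(X)$ does not suffice when $p=2$ and $a\geq 2$. Take $k=4$ and $f=X^2$, so $e=2$ and $p^a=4$; then $f(n)$ is a fourth power exactly when $n=\pm m^2$. With the natural candidate $h=g=X$, the integers supplied by Theorem~\ref{Hilbert} need not avoid $n=-m^2$, where $h(n)$ is not a square but $f(n)=m^4$ is a fourth power. No other $h$ works either. If $h(\pm m^2)$ is a rational square for all but finitely many $m$, then Theorem~\ref{Hilbert} applied to $Y^2-h(\pm T^2)$ forces $h(T^2)$ and $h(-T^2)$ to be squares in $\Q(T)$. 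Comparing root multiplicities and leading coefficients then makes $h$ a square in $\Q(X)$, so $Y^2-h$ is reducible. To complete your route you must do the following. Take $a=v_p(e)+1$ and write $f=\lambda h_0^{p^{a-1}}$ with $h_0$ not a constant times a $p$-th power. Note that $f(n)=m^{p^a}$ with $h_0(n)\neq 0$ forces $|\lambda|=\lambda_0^{p^{a-1}}$ for some $\lambda_0\in\Q_{>0}$, and forces $\pm\lambda_0 h_0(n)$ to be a $p$-th power in $\Q$ for one of the signs. Then apply Theorem~\ref{Hilbert} simultaneously to $Y^p-\lambda_0h_0(X)$ and $Y^p+\lambda_0h_0(X)$; the theorem allows finitely many polynomials at once. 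Separately, your first bullet must be conditioned on $e=k$, not on $\lambda$ failing to be a $k$-th power. For $f=2X$ and $k=2$ one has $\lambda=2$, not a square, yet $f(2m^2)=(2m)^2$.

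The Capelli detour is what generates all this bookkeeping, and it is unnecessary. The paper states the proposition as a corollary of Theorem~\ref{Hilbert} without writing a proof, and the direct argument is short. Factor $Y^k-f(X)$ into irreducible factors in $\Q[X,Y]$, normalized to be monic in $Y$; this is possible because the product is monic in $Y$. No factor has $Y$-degree $1$. Indeed, a factor $Y-r(X)$ would give $f=r^k$ with $r\in\Q[X]$, hence $r\in\Z[X]$ by Gauss's lemma, contradicting the hypothesis. Applying Theorem~\ref{Hilbert} to all the factors at once gives infinitely many integers $n$ at which every specialized factor is irreducible in $\Q[Y]$ of degree at least $2$. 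For these $n$ the polynomial $Y^k-f(n)$ has no rational root, so $f(n)$ is not even the $k$-th power of a rational number. No case analysis on $\lambda$, on $e$, or on prime powers dividing $k$ is needed.
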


\begin{es}[Curves with small gonality]\label{smallgon}
    Let $k$ be an integer, and $p(x)\in\Z[x]\setminus\{x\}$ be a polynomial which is not a $k$-th power of a polynomial. We consider the curves
    $$\ell: x=0,\quad C_1: y^k=x,\quad\text{and}\quad C_2: y^k=p(x).$$
    Then, for each $n\in\Z$, we set $Q_n:=(0,\sqrt[k]{p(n)})$ and get 
    $$Q_n\in \ell,\quad (p(n),\sqrt[k]{p(n)})=Q_n+(p(n),0)\in C_1,\quad (n,\sqrt[k]{p(n)})=Q_n+(n,0)\in C_2.$$
    Since for infinitely many $n$, $\sqrt[k]{p(n)}$ is not an integer, then the three curves have infinitely many intersections on the real torus.
\end{es}

Example \ref{smallgon} shows that we cannot weaken the condition on $\cC_2$ in Proposition~\ref{lineandtwocurves}.(a). Indeed, for each $h>k\geq 2$ we can consider the curves
$$\ell: x=0,\quad \cC_1: y^k=x,\quad\text{and}\quad \cC_2: y^k=x^h-x.$$
The curve $\widetilde{\cC_2}$ admits a map of degree $k$ to $\pr_1$ that maps its points at infinity to $\infty$, and the three curves have infinitely many triple intersections on the real torus.

\begin{oss}\label{remsymprod}
    In the proof of Proposition~\ref{lineandtwocurves}, we do not explicitly use the fact that every tuple of conjugate points must lie on a line of fixed slope. Indeed, if we have a line $\ell$, two curves $\cC_1$ and $\cC_2$, and a triple intersection $P$ of degree $n\leq \deg \cC_1$, then we have $n$ conjugate points on $\cC_2$ which lie on lines of a fixed slope. Therefore, we are looking for integral points on the symmetric product $\cC_2^{(n)}$ that satisfy an algebraic condition, meaning they lie on a subvariety $W$. Theorem~5.5 in \cite{Levin_2016} states that all but finitely many integral points of $\cC_2^{(n)}$ lie in positive-dimensional linear systems. Consequently, one expects $W$ to contain at least one subvariety isomorphic to the complement of a finite union of hyperplanes in a projective space.
    
    Example~\ref{smallgon} shows that, when the projective completion of $\ell$ and $\cC_2$ meet at a point at infinity $Q$ which has multiplicity $\deg \cC_2-n$, there are infinitely many $n$-tuples of conjugate points lying on a line with the same slope as $\ell$. The corresponding integral points on $\cC^{(n)}$ lie on a rational curve, parametrized by the pencil of lines containing $Q$. 
\end{oss}

\begin{oss}
    We can give another geometric characterization of the case of one line $\ell$ and two curves $\cC_1,\cC_2$ of higher degree having infinitely many triple intersections on the real torus. Let $\cC_1,\cC_2$ be defined by the polynomials $f_1(x,y),f_2(x,y)\in K[x,y]$ for some number field $K$. Suppose for simplicity that the line $\ell$ is the vertical axis given by the equation $x=0$, so that we have to consider only horizontal translations. Suppose that there are infinitely many $P=(0,y_P)\in\ell$ for which there exist integers $m_P,n_P\in\Z$ such that $(n_P,y_P)\in \cC_1$ and $(m_P,y_P)\in\cC_2$. Then, if we consider, in the space $\A^3$ with coordinates $u,v,w$, the curve cut out by the equations $f_1(u,w)=f_2(v,w)=0$; its projection $\widetilde\cC$ onto the first two coordinates has infinitely many integral points. Therefore, by Siegel's theorem, there is a component of $\widetilde\cC$ which is rational and admits a normalization with at most two points at infinity. We observe that this phenomenon happens in Example~\ref{smallgon}, where the curve $\widetilde\cC$ has equation $u=p(v)$. Classifying all pairs $\cC_1,\cC_2$ such that $\widetilde\cC$ has a rational component is a difficult problem, but a similar idea is used in the next part of this section to deal with the case of two curves with the same degree.
\end{oss}

Proposition~\ref{lineandtwocurves} does not cover the case of one line and two curves of the same degree. We address this situation in the following theorem.

\begin{thm}\label{lineandsamedegree}
    Let $\ell\subset \A^2(\R)$ be a line defined over $\Q$ and $\cC_1,\cC_2\subset \A^2(\R)$ be two curves of degree $d>2$, defined over a number field $K$, that do not intersect $\ell$ at infinity. Let $\widetilde{\cC_2}$ be a nonsingular projective completion of $\cC_2$ and let 
    $$(\widetilde{\cC_2}\setminus \cC_2)(\overline{K})=\{P_1,\dots,P_q\}.$$ Suppose that does not exist a morphism $\phi\colon \widetilde{\cC_2}\rightarrow \pr^1$ over $\overline{K}$ satisfying $\phi(\{P_1,\dots P_q\})\subset\{0,\infty\}$ and $\deg\phi\leq\lfloor\frac{d}{2}\rfloor$. If $\ell, \cC_1, \cC_2$ have infinitely many triple intersections on the real torus, then there exist two endomorphisms $\varphi_1,\varphi_2$ of $\A^2$ that map respectively $\cC_1$ and $\cC_2$ to a curve $\cC\subset \A^2$ defined over $\Q$. The same result holds when switching the roles of $\cC_1$ and $\cC_2$.
\end{thm}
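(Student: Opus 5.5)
We would prove the statement by reducing the problem of infinitely many triple intersections to integral points on an auxiliary plane curve, and then promoting a pointwise coincidence of fibres to an identity of polynomials. First we normalize $\ell$. Since $\ell$ is defined over $\Q$, a matrix in $\mathrm{SL}_2(\Z)$, as in the proof of Theorem~\ref{compactintersections}, makes $\ell$ vertical. A translation then gives $\ell: x=0$. Translations by arbitrary real vectors commute with the $\Z^2$-action, and the $\mathrm{SL}_2(\Z)$ change preserves the lattice, so neither step changes the problem. Write $\cC_i: f_i(x,y)=0$ with $f_i\in K[x,y]$. The hypothesis that $\cC_i$ does not meet $\ell$ at infinity means that $(0:1:0)\notin\widetilde{\cC_i}$. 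Hence $y^d$ occurs in $f_i$ with a nonzero constant coefficient, and we normalize $f_i$ to be monic of degree $d$ in $y$. A triple intersection is then a point $(0,y_P)$ together with integers $n,m$ such that $(n,y_P)\in\cC_1$ and $(m,y_P)\in\cC_2$; vertical integral translations are absorbed into $y_P$. We enlarge $K$ and choose a finite set of places $S$ as in the proof of Proposition~\ref{conLevin}, so that all such $y_P$ are $S$-integral.

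The first key step is to show that, for all but finitely many triple intersections, $f_1(n,y)=f_2(m,y)$ as polynomials in $y$. Put $g=\gcd(f_1(n,y),f_2(m,y))$, of degree $k\ge1$. The cofactor $f_2(m,y)/g$ has degree $d-k$. Suppose $k<d$. If $k\le d/2$, then $y_P$ itself is a root of $g$ of degree $\le\lfloor d/2\rfloor$. If $k>d/2$, then a root $y'$ of the cofactor has degree $\le d-k<d/2$. In either case we obtain an $S$-integral point $(m,\cdot)$ of $\cC_2$ of degree $\le\lfloor d/2\rfloor$. By the gonality hypothesis and Levin's Theorem~\ref{Levin}, there are only finitely many such points, so only finitely many values of $m$ arise this way. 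For each fixed $m$, the point $y_P$ ranges over finitely many roots of $f_2(m,y)$. Hence, outside a finite set, $k=d$, and since both polynomials are monic of degree $d$ they are equal.

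Next we use the integral points to produce the endomorphisms. Write $f_i(x,y)=y^d+\sum_{j<d}c_{i,j}(x)y^j$ with $c_{i,j}\in K[x]$. Infinitely many distinct triple intersections give infinitely many integral pairs $(n,m)$, because each pair yields at most $d$ values of $y_P$. For all of these pairs, $c_{1,j}(n)=c_{2,j}(m)$ for every $j$. Consider the curve $\cD\subset\A^2$ cut out by all the equations $c_{1,j}(u)=c_{2,j}(v)$. It contains infinitely many integral points, so by Siegel's Theorem~\ref{SiegelThm} some component $\cD'$ has genus $0$ and at most two points at infinity, and it is defined over $\Q$ since its integral points are Zariski dense. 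Choose $j$ with $c_{1,j}$ nonconstant; such a $j$ exists, otherwise $\cC_1$ would be a union of horizontal lines. We apply Bilu--Tichy (Theorem~\ref{BiluTichy}) to $c_{1,j}(u)=c_{2,j}(v)$, or equivalently use Lüroth's theorem on the function field of $\cD'$. This should give polynomials $a_1,a_2\in\Q[t]$ such that every coefficient factors as $c_{1,j}=c_j\circ a_1$ and $c_{2,j}=c_j\circ a_2$ for a common family $c_j$. We then set $\cC: y^d+\sum_j c_j(x)y^j=0$ and $\varphi_i(x,y)=(a_i(x),y)$, so that $\varphi_i$ maps $\cC_i$ to $\cC$. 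Because the points on $\cD'$ are rational, we can arrange the $c_j$ to be defined over $\Q$. The roles of $\cC_1$ and $\cC_2$ are symmetric throughout, which gives the final sentence of the statement.

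We expect the main obstacle to be this last step: obtaining a \emph{single} pair $(a_1,a_2)$ that works simultaneously for all coefficient pairs. Bilu--Tichy applied to one equation gives only a standard-pair decomposition, possibly involving Chebyshev or Dickson factors. To handle this we would take $\Q(\cD')=\Q(t)$ and apply Lüroth to the subfields $\Q(u)$ and $\Q(v)$, where $u=\alpha_1(t)$ and $v=\alpha_2(t)$. The common subfield generated by all the functions $c_{1,j}(\alpha_1(t))=c_{2,j}(\alpha_2(t))$ equals $\Q(s)$ for some $s$, and a polynomial decomposition argument, using that $\cD'$ has at most two places at infinity, should show that $s$ can be written as $a_1(u)=a_2(v)$.
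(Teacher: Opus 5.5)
There is a genuine gap at the very start: your claim that vertical integral translations are absorbed into $y_P$ is false. Moving $P$ along $\ell$ absorbs only one of the two vertical translations. If $P=(0,y_P)$ with $P+(n,k_1)\in\cC_1$ and $P+(m,k_2)\in\cC_2$, then replacing $P$ by $P+(0,k_2)$ still leaves the relative shift $k=k_1-k_2$. So the correct condition is $f_1(n,y+k)=f_2(m,y)$, with a third, possibly unbounded, integer unknown. Your identities $c_{1,j}(n)=c_{2,j}(m)$ therefore do not follow.

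This is not a harmless simplification. Take $\ell\colon x=0$, $\cC_1\colon (y-x)^3=x$ and $\cC_2\colon y^3=x^3-x$. Here $\cC_2$ is a smooth cubic of genus $1$, so the gonality hypothesis holds for $d=3$, and neither curve passes through $(0:1:0)$. For $m\in\Z$ put $n=m^3-m$ and $P_m=(0,\sqrt[3]{m^3-m})$. Then $P_m+(m,0)\in\cC_2$ and $P_m+(n,n)\in\cC_1$. For $m\geq 2$ these points are pairwise distinct on the torus, since $\sqrt[3]{m^3-m}\in(m-1,m)$ and $m-\sqrt[3]{m^3-m}$ is strictly decreasing. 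Yet $f_1(n,y)=f_2(m,y)$ holds only for $n=0$, $m\in\{0,\pm1\}$. Your target factorization is also impossible here. The $y^2$-coefficients are $c_{1,2}=-3x$ and $c_{2,2}=0$, so $c_2\circ a_1=-3x$ forces $c_2$ to be linear. Then $c_2\circ a_2=0$ forces $a_2$ to be constant, which contradicts $c_{2,0}=x-x^3$.

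The paper keeps this shift. After removing the $y^{d-1}$-term of $\cC_2$, it compares $y^{d-1}$-coefficients to get $dk+a_{d-1}(l)=0$. This eliminates $k$ and leaves the separated system~\eqref{dcurve} in the coefficients of the depressed polynomials. Accordingly, the endomorphism on $\cC_1$ must carry a shear, $\psi_1(x,y)=(f(x),y+\frac1d a_{d-1}(x))$, not just a base change $(a_1(x),y)$. In the example above, $(x,y)\mapsto(x,y-x)$ and $(x,y)\mapsto(x^3-x,y)$ send $\cC_1$ and $\cC_2$ onto $y^3=x$.

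With this correction the rest of your argument goes through. Your gcd/Levin step only involves $f_2(m,y)$, so it is unaffected, and it is a more explicit version of the paper's. Your L\"uroth plan for the last step also works on the depressed coefficients, and it does not need $\cD'$ to be rational. Inside $K(\cD')$, the field $K(u)\cap K(v)$ is a subfield of $K(u)$, hence equals some $K(s)$. For nonconstant polynomials $F,G$ with $F(u)=G(v)$, this function has a single pole in $K(s)$. That pole is a rational place above which lie only $u=\infty$ in $K(u)$ and only $v=\infty$ in $K(v)$. Choosing $s$ with its only pole there gives $s=a_1(u)=a_2(v)$ with $a_1,a_2$ polynomials, and every coefficient then factors polynomially through $s$. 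This also supplies the single common decomposition that the paper asserts when it applies Bilu--Tichy to all the equations at once.
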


Let $\ell, \cC_1,\cC_2\subset \A^2(\R)$ be as in the Theorem~\ref{lineandsamedegree}. Up to a change of variables over $\Q$, we can reduce to the following case:  
$$\ell:x=0\quad \cC_1:y^d+\sum_{i=0}^{d-1}a_i(x)y^i=0\quad \cC_2:y^d+\sum_{i=0}^{d-2}b_i(x)y^i=0,$$
satisfying $a_i(x),b_i(x)\in\ K[x]\setminus\{0\}$, where $\deg a_i,\deg b_i\leq d-i$ for each $i$.

If there are infinitely many triple intersections, then there is a solution of the following system for infinitely many $n,m,k\in \Z$
$$\begin{cases}
    x=m\\
    (y+k)^d+\sum_{i=0}^{d-1}a_i(x+n)(y+k)^i=0\\
    y^d+\sum_{i=0}^{d-2}b_i(x)y^i=0,
\end{cases}$$
which is equivalent to
$$\begin{cases}
    p_1(l,k,y)&:=(y+k)^d+\sum_{i=0}^{d-1}a_i(l)(y+k)^i\\
    &\;=y^d+y^{d-1}(dk+a_{d-1}(l))+\sum_{i=0}^{d-2}q_i(k,l)y^i=0\\
    p_2(m,y)&:= y^d+\sum_{i=0}^{d-2}b_i(m)y^i=0,
\end{cases}$$
where we have defined $l:=m+n$ and $q_i(k,l):=\binom{d}{j}k^{d-j}+\sum_{j=i}^{d-2}\binom{j}{i}a_j(l)k^{j-i}$.

From the hypothesis and Theorem~\ref{Levin}, the curve $\cC_2$ has only finitely many integral points of degree at most $\lfloor\frac{d}{2}\rfloor$. Hence, if there are infinitely many triple intersections, they correspond to points of degree $d$. Indeed, if $\ell\cap \cC_2$ is reducible over $K$, at least one component has degree at most $\lfloor\frac{d}{2}\rfloor$. The equations $p_1(l,k,y)=0,p_2(m,y)=0$ in $y$ have the same solution of degree $d$ over $\Q$ if and only if the polynomials $p_1(l,k,y),p_2(m,y)\in\Q[y]$ have the same roots, that is, since they are monic, if and only if they are equal. This implies that for infinitely many integers $l,m,k$ we have

$$\begin{cases}
    dk+a_{d-1}(l)=0\\
    q_{d-2}(k,l)=b_{d-2}(m)\\
    \dots\\
    q_{0}(k,l)=b_0(m).
\end{cases}$$

Eliminating $k$, we get the following system.

\begin{equation}\label{dcurve}
    \begin{cases}
        q_{d-2}\left(-\frac{a_{d-1}(l)}{d},l\right)=b_{d-2}(m)\\
        \dots\\
        q_{0}\left(-\frac{a_{d-1}(l)}{d},l\right)=b_0(m).
\end{cases}
\end{equation}
which have infinitely many integral solutions if and only if the curves defined by the equations have a common component that has infinitely many integral points. By Siegel's Theorem, that curve must have genus zero and at most two points at infinity.

\begin{defi}
    Let $K$ be a field. An absolutely irreducible polynomial $E(x, y) \in K[x, y]$ is called \emph{exceptional} if the plane curve $E(x, y) = 0$ is of genus 0 and has at most two points at infinity.
\end{defi}

In \cite{BT2000}, there is a classification of polynomials of the form $f(x)-g(y)\in K[x,y]$, where $K$ is a number field, that have exceptional factors. Before stating the theorem, we introduce some terminology.

\begin{defi}[Dickinson's polynomials]
    Let $K$ be a field. For $n\in\Z_{\geq 1}$ and $a\in K$, the $n$-th Dickinson's polynomial $D_n(x,a)$ is defined by
    $$D_n(x,a)=\left(\frac{x+\sqrt{x^2-4a}}{2}\right)^n+\left(\frac{x-\sqrt{x^2-4a}}{2}\right)^n.$$
\end{defi}

\begin{defi}[Standard pairs]
    Let $K$ be a field. Let $a,b\in K^*$, $m,n$ be positive integers, and $p(x)\in K[x]\setminus\{0\}$. We define:
    \begin{compactenum}[(i)]
        \item a \emph{standard pair of the first kind} as 
        $$(x^m,ax^rp(x)^m)$$
        where $0\leq r<m,$ $(r,m)=1$ and $r+\deg p(x)>0$;
        \item a \emph{standard pair of the second kind} as 
        $$(x^2,(x^2+b)p(x)^2);$$
        \item a \emph{standard pair of the third kind} as 
        $$\left(D_m(x,a^n),D_n(x,a^m)\right)$$
        where $(m,n)=1$ and $D_n$ denotes the $m$-th Dickinson polynomial, defined by
        $$D_m\left(z + \frac{a}{z}, a\right) = z^m + \left(\frac{a}{z}\right)^m;$$
        \item a \emph{standard pair of the fourth kind} as 
        $$\left(a^{-\frac{m}{2}}D_m(x,a),b^{-\frac{n}{2}}D_n(x,b)\right)$$
        where $(m,n)=2$;
        \item a \emph{standard pair of the fifth kind} as 
        $$((ax^2-1)^3, 3x^4-4x^3).$$
        In all five cases, if we switch the components of the pairs, we obtain a standard pair of the same kind. A generic pair of the previous list will be called \emph{standard pair over $K$}. The \emph{degree} of a standard pair $(f,g)$ is the $d:=\max\{\deg f,\deg g\}.$  
    \end{compactenum}
\end{defi}

We observe that we can reduce to the case $K=\Q$, since the curves defined by the Equations (\ref{dcurve}) have infinitely many $\Z$-integral points, and so are defined over $\Q$.
\begin{thm}[{\cite[Theorems 1.1, 9.3]{BT2000}}]\label{BiluTichy}
Let $f(x), g(x) \in \Q[x]$ be non-constant polynomials. Then the following two assertions are equivalent.
    \begin{compactenum}[(a)]
        \item The equation $f(x)-g(y)=0$ has infinitely many $\Q$-rational solutions with a bounded denominator.
        \item We have $f = \phi \circ f_1 \circ \lambda$ and $g = \phi \circ g_1 \circ \mu,$ where $\lambda(x), \mu(x) \in \Q[x]$ are linear polynomials, $\phi(x) \in \Q[x]$, and $(f_1(x), g_1(x))$ is a standard pair over $\Q$ such that the equation $f_1(x) = g_1(y)$ has infinitely many rational solutions with a bounded denominator. 
    \end{compactenum}
    Moreover, if the assertions hold, then the polynomial $f(x)-g(y)$ has the exceptional factor $E(x,y)=c(f_1(\lambda(x))-g_1(\mu(y)))$ where $c\in\Q$ is a constant.
\end{thm}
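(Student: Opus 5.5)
The statement is a cited result (Bilu and Tichy, \cite{BT2000}), so the plan below reconstructs the strategy of their proof rather than a new argument. There are two directions. The implication (b)$\Rightarrow$(a) should be the easy one. If $(f_1,g_1)$ is a standard pair such that $f_1(x)=g_1(y)$ has infinitely many rational solutions with bounded denominator, I apply $\lambda^{-1},\mu^{-1}$ to these solutions. Since $\lambda,\mu$ are linear over $\Q$, this multiplies denominators by at most a fixed integer. Composing with $\phi$ preserves the equality, so $f(x)=g(y)$ again has infinitely many rational solutions with bounded denominator. In fact one only needs the solutions of $f_1(\lambda(x))=g_1(\mu(y))$, which lie on the factor $E(x,y)$ of $f(x)-g(y)$.

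For (a)$\Rightarrow$(b), I first reduce to geometry via Siegel's Theorem~\ref{SiegelThm}. Solutions with denominator bounded by $N$ are $R$-points for $R=\Z[1/N]$. Hence infinitely many of them lie on one irreducible component $X$ of $f(x)-g(y)=0$. That component must be defined over $\Q$, since otherwise its rational points lie in the intersection with a conjugate and are finite. It must also be non-hyperbolic, i.e.\ of genus $0$ with at most two points at infinity. So $f(x)-g(y)$ has an exceptional factor. The rest is the classification of pairs $(f,g)$ for which $f(x)-g(y)$ has an exceptional factor over $\Q$.

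To get the decomposition, I would work in the function field $\Q(X)\supset\Q(x),\Q(y)$. The field $\Q(x)\cap\Q(y)$ contains $f(x)=g(y)$. By Lüroth it equals $\Q(t)$ with $t=f_1(x)=g_1(y)$ polynomial after normalization, so $f=\phi\circ f_1$ and $g=\phi\circ g_1$. After the Fried-type reduction one arranges that $f_1(x)-g_1(y)$ is irreducible and defines $X$ birationally. The linear maps $\lambda,\mu$ arise from normalizing $f_1,g_1$.

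The core, and the step I expect to be the main obstacle, is showing that an irreducible curve $f_1(x)=g_1(y)$ of genus $0$ with at most two places at infinity forces $(f_1,g_1)$ to be standard. At infinity both $f_1$ and $g_1$ are totally ramified. The number of places of the fibre product over $\infty$ is $\gcd(\deg f_1,\deg g_1)$, so this gcd is at most $2$. Then I would apply Riemann--Hurwitz to the projection $X\to\pr^1_x$ of degree $\deg g_1$. The genus-$0$ condition constrains the finite branch points of $f_1$ and $g_1$ and their ramification. This yields a finite list of combinatorial possibilities:
\begin{compactenum}[(i)]
\item a single finite branch point, giving powers and standard pairs of the first kind;
\item two branch points with the Chebyshev/Dickson ramification pattern, giving the third and fourth kinds;
\item the special degree-$2$ and degree-$4$/$6$ configurations, giving the second and fifth kinds.
\end{compactenum}
In each case one recognizes the polynomial from its monodromy (e.g.\ a polynomial with only two finite critical values of the Chebyshev type is a twisted Dickson polynomial). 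This case analysis, together with descending the forms over $\overline{\Q}$ to $\Q$ (the twists by $a,b$), is where the real work lies. Finally, the exceptional factor is exactly $c\,(f_1(\lambda(x))-g_1(\mu(y)))$ by construction of $X$.
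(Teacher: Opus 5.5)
The paper does not prove this statement; it quotes it from Bilu--Tichy, so I can only measure your outline against what such a proof requires. Several steps are fine:
\begin{itemize}
\item the direction (b)$\Rightarrow$(a);
\item the use of Siegel's theorem to get an absolutely irreducible component $X$, defined over $\Q$, of genus $0$ with at most two places at infinity;
\item the L\"uroth step: write $\Q(x)\cap\Q(y)=\Q(t)$ in $\Q(X)$ and normalize $t=f_1(x)=g_1(y)$ to be polynomial in both variables, so that $f=\phi\circ f_1$, $g=\phi\circ g_1$, and $X$ is a component of $f_1(x)=g_1(y)$.
\end{itemize}

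The genuine gap is your claim that after this reduction $f_1(x)-g_1(y)$ is irreducible and defines $X$. The equality $\Q(x)\cap\Q(y)=\Q(t)$ does not make $\Q(x)$ and $\Q(y)$ linearly disjoint over $\Q(t)$, and exceptional proper factors do occur.

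For instance, let $f=g=D_3(x,a)=x^3-3ax$ with $a\in\Q^*$. Then $f(x)-g(y)=(x-y)(x^2+xy+y^2-3a)$. The smooth conic $X\colon x^2+xy+y^2=3a$ is defined over $\Q$, has genus $0$ and has two points at infinity. Since $[\Q(X):\Q(x)]=2$ and $D_3$ has prime degree, $\Q(x)\cap\Q(y)=\Q(D_3(x))$. So your reduction returns $f_1=g_1=D_3$ with $f_1(x)-g_1(y)$ reducible. No decomposition of $D_3$ presents $X$ as $c(f_1(\lambda(x))-g_1(\mu(y)))$, since that would need $\deg f_1=2$. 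The same phenomenon occurs for the quadratic factors $x^2-(\zeta^k+\zeta^{-k})xy+y^2+(\zeta^k-\zeta^{-k})^2a$ of $D_p(x,a)-D_p(y,a)$, $p$ an odd prime.

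Consequently the following parts of your argument only cover the case where $X$ is the whole curve $f_1(x)=g_1(y)$:
\begin{itemize}
\item counting places at infinity via $\gcd(\deg f_1,\deg g_1)$;
\item the Riemann--Hurwitz analysis of $X\to\pr^1_x$ as a map of degree $\deg g_1$;
\item the final identification of $X$ with $c(f_1(\lambda(x))-g_1(\mu(y)))$.
\end{itemize}

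The missing ingredient is a classification of exceptional \emph{proper} factors of $f_1(x)-g_1(y)$ when $f_1,g_1$ have no further common left composition factor. In Bilu--Tichy this is what produces, besides standard pairs, the Dickson-type ``specific pairs'' $(D_m(x,a),-D_m(x\cos(\pi/m),a))$. It must then be followed by an arithmetic argument showing that these do not contribute to (b). Over $\Q$ that last step is easy once the classification is available: the relevant quadratic factors have non-real conjugate points at infinity, so their real locus is bounded and carries only finitely many points of bounded denominator, and they can never be your $X$. Without the classification, however, (a)$\Rightarrow$(b) is not established.
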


Applying Theorem~\ref{BiluTichy} to Equations~(\ref{dcurve}), we obtain that, if there exist infinitely many triple intersections between the three curves $\ell, \cC_1, \cC_2$, then there exist polynomials  $$\lambda(x),\mu(x),\varphi_0(x),\dots,\varphi_{d-2}(x)\in\Q[x]\quad\text{with }\deg\lambda=\deg\mu=1$$
and a standard pair $(f,g)$ with $f(x),g(x)\in\Q[x]$ satisfying, for each $1\leq i\leq d-2$ 
$$q_{i}\left(-\frac{a_{d-1}(x)}{d},x\right)=\varphi_i(f(\lambda(x)))\quad\text{and}\quad b_{i}(x)=\varphi_i(g(\mu(x))).$$

Let $\cC$ be the curve defined by the equation $y^d+\varphi_{d-2}(x)y^{d-2}+\dots+\varphi_0(x)=0$. The curves $\cC_1$ and $\cC_2$ can be mapped to the curve $\cC$ applying the endomorphisms of the plane $\psi_1, \psi_2$ respectively, defined as follows:
$$\psi_1\colon (x,y)\mapsto \big(f(x),y+\frac{1}{d}a_{d-1}(x)\big)\quad \psi_2\colon (x,y)\mapsto \big(g(x),y\big).$$

This concludes the proof of Theorem~\ref{lineandsamedegree}. An analogous result should be expected even without assuming that the common solutions of Equations~\eqref{dcurve} are integral. Indeed, if there are infinitely many solutions, the plane curves defined by Equations~\eqref{dcurve} have a common component, and this fact should give strong restrictions on the curves $\cC_1,\cC_2$. A more general version of the problem is the following:

 \begin{que}\label{quecomplexpoints}
     Let $\cC_1, \cC_2\subset \A^2(\C)$ be two affine plane curves, and $\ell\subset \A^2(\C)$ be a line. Suppose that there exist infinitely many pairs of vectors $(v_1,v_2)\in \C^2$ with the following property: the line $\ell_{v_1}$ obtained translating $\ell$ by $v_1$ intersect $\cC_1$ in $d$ complex points, and the translations of this $d$ points by $v_2$ all lie on $\cC_2$. What can we say about $\cC_1$ and $\cC_2$?    
 \end{que}

 We expect that two curves with that property should satisfy a geometric condition like the one arising in Theorem~\ref{lineandsamedegree}. However, the proof of that result should rely only on geometric considerations and avoid arithmetic ones.

 \subsubsection*{An example: $d=3$.}
 Due to Theorem~\ref{lineandsamedegree}, for classifying pairs of cubic curves $\cC_1, \cC_2$ such that $\ell, \cC_1, \cC_2$ have infinitely many triple intersections, we have to classify standard pairs of degree at most 3.

 \begin{compactenum}[(i)]
    \item Standard pairs of the first kind of degree 2:
    \begin{itemize}
        \item case 1: $m=1$, $r=0$, $\deg p\leq 2$, then $f(x)=x$, $g(x)=ax^2+bx+c$, $a,b,c\in\Q, a\neq 0$;
        \item case 2: $m=2$, $r=0$, $\deg p=1$, then $f(x)=x^2$, $g(x)=a(bx+c)^2$ with $a,b,c\in\Q, ab\neq 0$.
    \end{itemize}  
    \item Standard pairs of the first kind of degree 3:
    \begin{itemize}
        \item case 1: $m=1$, $r=0$, $\deg p =3$, then $f(x)=x$, $g(x)=ax^3+bx^2+cx+d$, with $a,b,c,d\in\Q,\; a\neq 0$;
        \item case 2: $m=2$, $r=1$, $\deg p =1$, then $f(x)=x^2$, $g(x)=ax(bx+c)^2$ with $a,b,c\in\Q,\; ab\neq 0$;
        \item case 3: $m=3$, $r=0$, $\deg p =1$, then $f(x)=x^3$, $g(x)=a(bx+c)^3$ with $a,b,c\in\Q,\; ab\neq 0$;
    \end{itemize}
    \item Standard pairs of the second kind of degree 2: $f(x)=x^2$, $g(x)=ax^2+b$ with $a,b\in\Q$, $a\neq 0$.
    \item Standard pairs of the third kind of degree 2: the only case is $m=2, n=1$, then $f(x)=x^2-2a$ with $a\in\Q$, $g(x)=x$.  
    \item Standard pairs of the third kind of degree 3: 
    \begin{itemize}
        \item case 1: $m=3,\; n=2$, then $f(x)=x^3-3ax$ with $a\in\Q$, $g(x)=x$. 
        \item case 2: $m=3,\; n=1$, then $f(x)=x^3-3a^2x$ with $a\in\Q$, $g(x)=x^2-2a^3$. 
    \end{itemize}
    \item Standard pairs of the fourth kind of degree 2: the only case is $m=n=2$, then $f(x)=a^{-1}(x^2-a),$ and $g(x)=-b^{-1}(x^2-b)$.
    \item There are no standard pairs of degree 2 of the fifth kind, and there are no standard pairs of degree 3 of the second, fourth, and fifth kind.
\end{compactenum}
To sum up, exceptional factors of degree 2, up to a multiplicative constant, are all of the form
    $$ax+b-(cy^2+dy+e) \quad \text{or}\quad (ax+b)^2-(cy+d)^2-e$$
with $a,b,c,d,e\in\Q, a,c\neq 0$. Meanwhile, exceptional factors of degree 3 are all of the form
\begin{align*}
    &ax+b- (cy^3+dy^2+ey+f),\quad (ax+b)^2-cy(dy+e)^2\\
    &(ax+b)^3-c(dy+e)^3,\quad\text{or}\quad (bx+c)^3-3a^2(bx+c)-(dy+e)^2+2a^3.
\end{align*}
We summarize the discussion in the following example.
\begin{es}\label{linetwocub}
    The pairs of cubic curves $(\cC_1, \cC_2)$ defined over $\Q$ and having infinitely many triple intersections on the real torus with the line $\ell:= x=0$ are, up to linear change of coordinates defined over $\Q$, the following:
    \begin{compactenum}[(a)]
        \item $\cC_1: y^3+p(x)y+p(x)=0$ and $\cC_2: y^3+q(x)y+q(x)$, where $p(x)\in\Q[x]$ has degree 1 and $q(x)\in\Q[x]$ has degree 1 or 2.
        \item $\cC_1: y^3+p(x)^2y+p(x)^2=0$ and $\cC_2: y^3+q(x)^2y+q(x)^2=0$, where $p(x),q(x)\in\Q[x]$ have degree 1.
        \item $\cC_1: y^3+p(x)=0$ and $\cC_2: y^3+q(x)$, where $p(x)\in\Q[x]$ has degree 1 and $q(x)\in\Q[x]$ has degree 1,2 or 3.
        \item $\cC_1: y^3+p(x)^2=0$ and $\cC_2: y^3+q(x)^2=0$, where $p(x),q(x)\in\Q[x]$ have degree 1.
        \item $\cC_1: y^3+p(x)^2=0$ and $\cC_2: y^3+axq(x)^2=0$, where $p(x),q(x)\in\Q[x]$ have degree 1 and $a\in\Q$.
        \item $\cC_1: y^3+p(x)^3=0$ and $\cC_2: y^3+q(x)^3$, where $p(x),q(x)\in\Q[x]$ have degree 1.
        \item $\cC_1: y^3+p(x)^3-3a^2p(x)=0$ and $\cC_2: y^3+q(x)^2-2a^3$, where $p(x),q(x)\in\Q[x]$ have degree 1 and $a\in\Q$.
    \end{compactenum}
\end{es}

\section{Reduction to number fields}\label{Secspecialization}
In this section, we present a specialization argument that extends the results established in the previous sections for curves defined over a number field to generic real curves.

Let $\cC_1, \cC_2, \cC_3$ be three algebraic plane curves, and $K$ be a finitely generated field over $\Q$ where the three curves are defined. Let $w_1,\dots,w_m$ be a transcendence basis for the extension $K/\Q$. By the primitive element theorem, there exists $\gamma\in K$ such that 
$K=\Q(w_1,\dots,w_m)[\gamma]$ where $\gamma$ is algebraic over $\Q(w_1,\dots,w_m)$. 

The curves $\cC_1,\cC_2$, and $\cC_3$ are described by equations of the following form:
$$\cC_i: \sum_{j,k} F_{j,k}^{(i)}(w_1,\dots,w_m,\gamma)x^jy^k=0,$$
where $F_{j,k}^{(i)}(w_1,\dots,w_m,\gamma)\in\Q[w_1,\dots,w_m](\gamma)$. We need a specialization 
$$\phi: (w_1,\dots,w_m)\mapsto (\alpha_1,\dots,\alpha_m)\in (\overline{\Q})^m$$
with the following properties:
\begin{compactenum}[(i)]
    \item $\phi$ maps distinct curves to distinct curves;
    \item $\phi$ preserves the degree, the genus and the gonality of each curve;
    \item $\phi$ maps curves not in the same orbit under the action of $\Z^2$ to curves with the same property.
\end{compactenum}
Fixed $\alpha_1,\dots,\alpha_m,\gamma$, the property (i) does not hold when there are $i,i'$ such that 
$$F_{j,k}^{(i)}(\alpha_1,\dots,\alpha_m,\gamma)=F_{j,k}^{(i')}(\alpha_1,\dots,\alpha_m,\gamma)\quad\text{for each } j,k.$$
Moreover, the property (ii) fails when some algebraic conditions on the coefficients are satisfied, that is, some polynomials vanish in $\alpha_1,\dots,\alpha_m,\gamma$. Hence, the $m$-uples $(\alpha_1,\dots,\alpha_m)$ for which properties (i) and (ii) do not hold are contained in a Zariski-closed proper subset of $\overline{\Q}^m$. Therefore, there are infinitely many specializations which satisfy properties (i) and (ii).

Now we have to prove that there are specializations that map two curves that do not belong to the same orbit with respect to the action of $\Z^2$ in curves with the same property. We consider two curves $\cC_1(w_1,\dots,w_m), \cC_2(w_1,\dots,w_m)$ as before, where we have made explicit the dependence on $w_1,\dots,w_m$. Let us consider the set
$$X=\left\{(w_1,\dots,w_m,v)\in\A^m\times \A^2\bigm| \cC_1(w_1,\dots,w_m)+v=\cC_2(w_1,\dots,w_m)\right\},$$
where the notation $\cC_1(w_1,\dots,w_m)+v$ represent the set obtained translating the points of $\cC_1$ by the vector $v$. We say that the $m$-tuple $\alpha_1,\dots,\alpha_m$ is a \emph{bad choice} if there exists $v\in\Z^2$ satisfying 
\begin{equation}\label{tras}
\cC_1(\alpha_1,\dots,\alpha_m)+v=\cC_2(\alpha_1,\dots,\alpha_m).
\end{equation}
If there are only finitely many bad choices, we can choose a specialization that satisfies property (iii). Now we suppose that there are infinitely many bad choices. Then $\dim X>0$, that is, $X$ is an algebraic variety and we have a projection $X\rightarrow \A^m$. We observe that for each $m$-tuple $\alpha_1,\dots,\alpha_m$ there is at most one vector $v\in \A^2$ satisfying Condition~\eqref{tras}, hence we have a section 
$$f\colon \A^m\rightarrow X \quad (w_1,\dots,w_m)\mapsto ((w_1,\dots,w_m),v(w_1,\dots,w_m)),$$
and then $v(w_1,\dots,w_m)$ is a rational function of $w_1,\dots,w_m$. Therefore, it is enough to choose a specialization such that $v(\alpha_1,\dots,\alpha_m)$ is not an integer vector. 

\bibliographystyle{plain}
\bibliography{bibliografia}

\end{document}